\documentclass[11pt]{article}

\usepackage{amsfonts}
\usepackage{amsmath}

\textheight 205 true mm \textwidth  150 true mm \oddsidemargin
2.5true mm \evensidemargin 2.5 true mm

%
%

\newcommand{\beq}{\begin{equation}}
\newcommand{\eeq}{\end{equation}}
\newcommand{\bea}{\begin{eqnarray}}
\newcommand{\eea}{\end{eqnarray}}
\newcommand{\beas}{\begin{eqnarray*}}
\newcommand{\eeas}{\end{eqnarray*}}

%
%
\newtheorem{theorem}{Theorem}[section]

\newtheorem{definition}[theorem]{Definition}
\newtheorem{proposition}[theorem]{Proposition}

\newtheorem{lemma}[theorem]{Lemma}

\newtheorem{remark}[theorem]{Remark}
\newtheorem{example}[theorem]{Example}
\newtheorem{examples}[theorem]{Examples}
\newtheorem{foo}[theorem]{Remarks}

%
%
\newenvironment{proof}{\addvspace{\medskipamount}\par\noindent{\it
Proof}.}
{\unskip\nobreak\hfill$\Box$\par\addvspace{\medskipamount}}






\newcommand{\bM}{\mathbb M}

\newcommand{\F}{\mathcal F}

\newcommand{\M}{\mathbb M}





\parindent=0pt

\title{Sobolev, Poincar\'e and isoperimetric  inequalities  for subelliptic diffusion operators satisfying a generalized curvature dimension inequality }
\author{Fabrice Baudoin\footnote{First author supported in part by
NSF Grant DMS 0907326}, Bumsik Kim }

\date{Department of Mathematics, Purdue University \\
 West Lafayette, IN, USA}


\begin{document}
\maketitle

\begin{abstract}

By adapting some ideas of  M. Ledoux \cite{ledoux2}, \cite{ledoux-stflour} and  \cite{Led}  to a sub-Riemannian framework we study  Sobolev, Poincar\'e and isoperimetric inequalities associated to subelliptic diffusion operators that satisfy the generalized curvature dimension inequality that was  introduced by F. Baudoin and N. Garofalo in  \cite{Bau2}.
Our results apply in particular on all CR Sasakian manifolds whose horizontal Webster-Tanaka-Ricci curvature is non negative, all Carnot groups with step two, and wide subclasses of principal bundles over Riemannian manifolds whose Ricci curvature is non negative.
\end{abstract}

\baselineskip 0.30in

\tableofcontents

\section{Introduction and framework} \label{sec:framework}

In this paper, $\bM$ will be a $C^\infty$ connected finite dimensional manifold endowed with a smooth measure $\mu$ and a second-order diffusion operator $L$ on $\M$, locally subelliptic in the sense of \cite{FP1} (see also \cite{JSC}), satisfying $L1=0$, and
\begin{equation*}
\int_\bM f L g d\mu=\int_\bM g Lf d\mu,\ \ \ \ \ \ \int_\bM f L f d\mu \le 0,
\end{equation*}
for every $f , g \in C^ \infty_0(\bM)$.
We indicate with $\Gamma(f):=\Gamma(f,f)$ the \textit{carr\'e du champ} of $L$, that is the quadratic differential form defined by
\begin{equation}\label{gamma}
\Gamma(f,g) =\frac{1}{2}(L(fg)-fLg-gLf), \quad f,g \in C^\infty(\bM).
\end{equation}

There is an intrinsic distance associated to $L$ that can be defined via the notion of subunit curves (see \cite{FP1}). An absolutely continuous curve $\gamma: [0,T] \rightarrow \bM$ is said to be subunit for the operator $L$ if for every smooth function $f : \bM \to \mathbb{R}$ we have $ \left| \frac{d}{dt} f ( \gamma(t) ) \right| \le \sqrt{ (\Gamma f) (\gamma(t)) }$.  We then define the subunit length of $\gamma$ as $\ell_s(\gamma) = T$. Given $x, y\in \M$, we indicate with
\[
S(x,y) =\{\gamma:[0,T]\to \M\mid \gamma\ \text{is subunit for}\ L, \gamma(0) = x,\ \gamma(T) = y\}.
\]
In this paper we assume that $S(x,y)$ is not empty for every $ x, y\in \M$. Under such assumption  it is easy to verify that
\begin{equation}\label{ds}
d(x,y) = \inf\{\ell_s(\gamma)\mid \gamma\in S(x,y)\},
\end{equation}
defines a true distance on $\M$. Furthermore, in that case, it is known that
\begin{equation}\label{di}
d(x,y)=\sup \left\{ |f(x) -f(y) | \mid f \in  C^\infty(\bM) , \| \Gamma(f) \|_\infty \le 1 \right\},\ \ \  \ x,y \in \bM.
\end{equation}
Throughout this paper, we assume that the metric space $(\M,d)$ is complete.

In addition to the differential form \eqref{gamma}, we assume that $\M$ is endowed with another smooth symmetric bilinear differential form, indicated with $\Gamma^Z$, satisfying for $f,g \in C^\infty(\M)$
\[
\Gamma^Z(fg,h) = f\Gamma^Z(g,h) + g \Gamma^Z(f,h),
\]
and $\Gamma^Z(f) = \Gamma^Z(f,f) \ge 0$.

We make the following assumptions that will be in force throughout the paper:

\

\begin{itemize}
\item[(H.1)] There exists an increasing
sequence $h_k\in C^\infty_0(\bM)$   such that $h_k\nearrow 1$ on
$\bM$, and \[
||\Gamma (h_k)||_{\infty} +||\Gamma^Z (h_k)||_{\infty}  \to 0,\ \ \text{as} \ k\to \infty.
\]
\item[(H.2)]
For any $f \in C^\infty(\bM)$ one has
\[
\Gamma(f, \Gamma^Z(f))=\Gamma^Z( f, \Gamma(f)).
\]
\item[(H.3)]
For every $f \in C_0^\infty(\bM)$ and $T \ge 0$, one has 
\[
\sup_{t \in [0,T]} \| \Gamma(P_t f)  \|_{ \infty}+\| \Gamma^Z(P_t f) \|_{ \infty} < +\infty,
\]
where $P_t$ is the heat semigroup generated by $L$.

 \end{itemize}
 \

As it has been proved in \cite{Bau2}, the assumption (H.1)  implies in particular that $L$ is essentially self-adjoint on $C^\infty_0(\bM)$. The assumption (H.2) is more subtle and is crucial for the validity of most  the subsequent results: It is discussed in details in \cite{Bau2} in several geometric examples. In the sub-Riemannian geometries covered by the present work (H.2) means that the torsion of the sub-Riemannian connection is vertical. Assumption (H.3) is necessary to rigorously justify the Bakry-\'Emery type arguments. It is a consequence of the generalized curvature dimension inequality below in many examples (see \cite{Bau2}). Other sufficient conditions ensuring that (H.3) is satisfied may be found in \cite{Wang}.

In addition to $\Gamma$ and $\Gamma^Z$ we need the following second order differential bilinear forms:
\begin{equation}\label{gamma2}
\Gamma_{2}(f,g) = \frac{1}{2}\big[L\Gamma(f,g) - \Gamma(f,
Lg)-\Gamma (g,Lf)\big],
\end{equation}
\begin{equation}\label{gamma2Z}
\Gamma^Z_{2}(f,g) = \frac{1}{2}\big[L\Gamma^Z (f,g) - \Gamma^Z(f,
Lg)-\Gamma^Z (g,Lf)\big].
\end{equation}
As for $\Gamma$ and $\Gamma^Z$, we will freely use the notations  $\Gamma_2(f) = \Gamma_2(f,f)$, $\Gamma_2^Z(f) = \Gamma^Z_2(f,f)$.

The following curvature dimension condition was introduced in \cite{Bau2}.

\begin{definition}(See \cite{Bau2})
We say that $L$ satisfies the  \emph{generalized curvature-dimension inequality} \emph{CD}$(\rho_1,\rho_2,\kappa,d)$ if
there exist constants $\rho_1 \in \mathbb{R} $,  $\rho_2 >0$, $\kappa \ge 0$, and $0< d < \infty$ such that the inequality
\begin{equation*}
\Gamma_2(f) +\nu \Gamma_2^Z(f) \ge \frac{1}{d} (Lf)^2 +\left( \rho_1 -\frac{\kappa}{\nu} \right) \Gamma(f) +\rho_2 \Gamma^Z(f)
\end{equation*}
 holds for every  $f\in C^\infty(\bM)$ and every $\nu>0$, where $\Gamma_2$ and $\Gamma_2^Z$ are defined by (\ref{gamma2}) and (\ref{gamma2Z}).
\end{definition}

The motivation for such criterion comes from the study of several examples coming from sub-Riemannian geometry where the generalized curvature dimension inequality turns out to be equivalent to lower bounds on intrinsic curvature tensors (see \cite{Bau2}). The parameter $\rho_1$ is of special importance, it is the curvature parameter. The condition $\rho_1=0$ means that the ambient space has a non negative curvature whereas the condition $\rho_1>0$ means that it has a positve curvature. In particular,  in the latter case a Bonnet-Myers type theorem was proved in \cite{Bau2}, implying that $\mathbb{M}$ needs to be compact. Let us mention that in a recent work F.Y. Wang proposed an extension of the generalized curvature dimension inequality in \cite{Wang}.

\

Our goal in the present work will be to discuss Sobolev  type embeddings, isoperimetric type results and Poincar\'e inequalities by using the generalized curvature dimension inequality. Our methods will exploit and extend to the present subelliptic framework some clever and beautiful ideas due to M. Ledoux (see  \cite{ledoux2}, \cite{ledoux-stflour} and  \cite{Led} ) who used heat semigroup methods to study isoperimetric, Sobolev and Poincar\'e inequalities. Our discussion will be based on the curvature parameter $\rho_1$.

\

In the case $\rho_1=0$, which is studied in Section 2, one of our main results  is the following Besov-Sobolev embedding:

\begin{theorem}
Assume that $L$ satisfies the  \emph{generalized curvature-dimension inequality} \emph{CD}$(0,\rho_2,\kappa,d)$. For every $1 \leq p < q < \infty$ and every $ f \in W^{1,p}(\mathbb{M}) $, we have
\begin{align*}
 \| f \|_q \leq C \| \sqrt{ \Gamma(f) } \|_{p}^{\theta} \| f \|_{ B_{\infty ,\infty}^{\theta / (\theta-1)} }^{1-\theta}
\end{align*}
 where $\theta = \frac{p}{q}$,  where $C>0$ is a constant that only depends on $p,q,\rho_2,\kappa,d$ and where $\| \cdot \|_{ B_{\infty ,\infty}^{\theta / (\theta-1)} }$ is the  Besov  norm which is introduced in \eqref{Besov_norm}.
\end{theorem}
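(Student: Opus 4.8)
The plan is to follow Ledoux's heat-semigroup interpolation strategy, adapting it to the subelliptic setting via the $\Gamma$–$\Gamma^Z$ formalism. The core inequality we need is a \emph{pointwise gradient bound} of the form
\begin{equation*}
\sqrt{\Gamma(P_t f)} \le C_1 P_t\!\big(\sqrt{\Gamma(f)}\big) + \frac{C_2}{\sqrt t}\, \|f\|_\infty,
\end{equation*}
or a suitable $L^p$–$L^\infty$ analogue, valid under CD$(0,\rho_2,\kappa,d)$. Such an estimate is exactly the type of consequence one extracts from the generalized curvature-dimension inequality by the Bakry–Émery method: one differentiates the functional $t \mapsto P_s\big((\Gamma + b\,\Gamma^Z)(P_{t-s}f)\big)$ along the semigroup, uses CD$(0,\rho_2,\kappa,d)$ with an optimized choice of the free parameter $\nu$ (taking $\nu$ of order $1/t$ or $t$ as in \cite{Bau2}) to control the derivative, and integrates; assumption (H.3) is what makes this manipulation rigorous, and (H.1), (H.2) are used to justify the integration by parts and the commutation identities. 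I would either cite the relevant gradient estimate already established in \cite{Bau2} or re-derive the precise form needed here.

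Granting the gradient bound, the interpolation argument runs as follows. First I would recall the definition of the Besov norm $\|f\|_{B^{s}_{\infty,\infty}}$ from \eqref{Besov_norm}; for negative smoothness $s = \theta/(\theta-1) < 0$ this is equivalent (up to constants depending on $\rho_2,\kappa,d$) to a heat-semigroup characterization of the type $\sup_{t>0} t^{-s/2}\,\|P_t f\|_\infty < \infty$, so that $\|P_t f\|_\infty \le t^{-s/2}\|f\|_{B^{s}_{\infty,\infty}}$ with $-s/2 = \tfrac12\cdot\frac{\theta}{1-\theta}$. Next, write $f = (f - P_t f) + P_t f$ and estimate $\|f\|_q$ by duality: pair $f$ against a test function $g$ with $\|g\|_{q'}\le 1$. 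For the smoothed part, $\langle P_t f, g\rangle \le \|P_t f\|_\infty \|g\|_1$, but it is cleaner to run the standard chain
\begin{equation*}
\|f\|_q \le \|f - P_t f\|_{\text{(something)}} + \|P_t f\|_q,
\end{equation*}
controlling $\|f - P_t f\|$ by $\int_0^t \|L P_s f\|\,ds$ and then $\|LP_s f\|$ (or rather $\|\sqrt{\Gamma(P_s f)}\|$) by the gradient bound in terms of $\|\sqrt{\Gamma(f)}\|_p$, while $\|P_t f\|_q$ is interpolated between $\|f\|_p$ (or $\|f\|_q$ itself, absorbed) and $\|P_t f\|_\infty \le t^{-\frac12\frac{\theta}{1-\theta}}\|f\|_{B}$ via $\|P_t f\|_q \le \|P_t f\|_p^{\theta}\|P_t f\|_\infty^{1-\theta}$ with $\theta = p/q$. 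Collecting terms produces a bound of the form $A\,t^{\alpha}\|\sqrt{\Gamma(f)}\|_p + B\,t^{-\beta}\|f\|_B$ with $\alpha,\beta>0$; optimizing over $t>0$ yields precisely the geometric-mean inequality $\|f\|_q \le C \|\sqrt{\Gamma(f)}\|_p^{\theta}\|f\|_B^{1-\theta}$ with the stated exponents, since the exponents $\alpha,\beta$ are forced by scaling to make $\theta$ come out as $p/q$.

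The main obstacle, and where the subelliptic nature really enters, is establishing the gradient estimate $\sqrt{\Gamma(P_t f)} \lesssim P_t(\sqrt{\Gamma f}) + t^{-1/2}\|f\|_\infty$ with a \emph{clean} dependence on $t$ and constants depending only on $\rho_2,\kappa,d$: unlike the Riemannian ($\Gamma_2 \ge \rho_1 \Gamma$) case, here $\Gamma^Z$ cannot be discarded, and one must carry the auxiliary functional $\Phi(s) = P_s\big(a(s)\Gamma(P_{t-s}f) + b(s)\Gamma^Z(P_{t-s}f)\big)$ with time-dependent weights $a,b$ chosen so that the bad cross-terms coming from the $\kappa/\nu$ coefficient in CD$(0,\rho_2,\kappa,d)$ are dominated — this is the technical heart and essentially reproduces the reverse Poincaré / gradient-bound computations of \cite{Bau2}, which I would invoke or adapt. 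A secondary point requiring care is the equivalence between the Besov norm \eqref{Besov_norm} and its semigroup description, and checking that $f \in W^{1,p}$ together with finiteness of the right-hand side legitimately allows all the approximation and density arguments (using (H.1) to truncate) so that the formal computation is justified for the full class of admissible $f$.
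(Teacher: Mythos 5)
Your high-level instinct (Ledoux's heat-semigroup method, a pseudo-Poincar\'e inequality as the key lemma, using the gradient estimates of \cite{Bau2} which require (H.1)--(H.3)) is right, and your reading of the Besov norm \eqref{Besov_norm} is correct. But the argument you outline from there does not actually close, for two independent reasons, and it also skips the genuinely hard step of the theorem.

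First, the pointwise commutation estimate $\sqrt{\Gamma(P_t f)}\le C_1 P_t(\sqrt{\Gamma(f)})+\frac{C_2}{\sqrt t}\|f\|_\infty$ that you place at the centre of the proof is not available under CD$(0,\rho_2,\kappa,d)$. What one does have are the Li--Yau estimate $\Gamma(\ln P_tf)\le (1+\frac{3\kappa}{2\rho_2})\frac{LP_tf}{P_tf}+\frac{d(1+\frac{3\kappa}{2\rho_2})^2}{2t}$ for $f\ge 0$, and the reverse Poincar\'e inequality $\Gamma(P_tf)+\rho_2 t\,\Gamma^Z(P_tf)\le\frac{1+2\kappa/\rho_2}{2t}(P_t(f^2)-(P_tf)^2)$. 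Neither yields a bound involving $P_t(\sqrt{\Gamma f})$; the first is a pointwise estimate on $\Gamma(P_tf)$ with no term $P_t(\Gamma f)$ on the right, and the second controls $\Gamma(P_tf)$ by $P_t(f^2)$, not by $P_t(\Gamma f)$. From these one extracts the $L^p$ gradient bounds $\|\sqrt{\Gamma(P_tf)}\|_p\lesssim t^{-1/2}\|f\|_p$ (Proposition \ref{GB1}), and then, by duality, the pseudo-Poincar\'e inequality $\|f-P_tf\|_p\le C\sqrt t\,\|\sqrt{\Gamma(f)}\|_p$ (Proposition \ref{lem:pseudo-poincare}). This last inequality is the lemma that drives the proof; the pointwise bound you posit is a stronger, unproved statement that you do not need.

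Second, the decomposition $\|f\|_q\le\|f-P_tf\|_{(\cdot)}+\|P_tf\|_q$ with the interpolation $\|P_tf\|_q\le\|P_tf\|_p^{\theta}\|P_tf\|_\infty^{1-\theta}$ and a single optimization in $t$ does not produce the desired bound. The factor $\|P_tf\|_p$ that this interpolation inserts is neither $\|\sqrt{\Gamma(f)}\|_p$ nor $\|f\|_{B^{\theta/(\theta-1)}_{\infty,\infty}}$, and since $\mu$ is not assumed finite and $p<q$, you cannot trade $\|P_tf\|_p$ for $\|f\|_q$ either. The correct mechanism, which is what the paper (following Ledoux \cite{Led}) uses, is not a single-$t$ optimization but a level-dependent choice $t_u=u^{2(\theta-1)/\theta}$: normalizing $\|f\|_{B^{\theta/(\theta-1)}_{\infty,\infty}}\le1$ gives $|P_{t_u}f|\le u$ for every $u$, so $\{|f|>2u\}\subset\{|f-P_{t_u}f|>u\}$, and Markov's inequality combined with the pseudo-Poincar\'e inequality yields the weak-type estimate $u^q\mu\{|f|>2u\}\le C\|\sqrt{\Gamma(f)}\|_p^p$ because $q-p+\frac p2\cdot\frac{2(\theta-1)}\theta=0$. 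Note that at this stage one only has $\|f\|_{q,\infty}$, not $\|f\|_q$.

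Finally, the passage from weak-$L^q$ to $L^q$ is where the ``improvement'' actually happens, and your outline omits it entirely. The paper's Step~2 introduces the truncations $\tilde f_u=(f-u)^+\wedge((c-1)u)+(f+u)^-\vee(-(c-1)u)$, applies pseudo-Poincar\'e to $\tilde f_u$ rather than to $f$ (exploiting $\Gamma(\tilde f_u)=\Gamma(f)\mathbf 1_{\{u\le|f|\le cu\}}$), integrates the resulting level-set bounds in $u$ against $d(u^q)$, and absorbs a $\frac{q}{(q-1)c^{q-1}}\|f\|_q^q$ term by choosing $c$ large; this still requires an a priori $f\in L^q$, which is then removed by the regularization $N_\epsilon(f)$ in Step~3. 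Without this truncation scheme there is no route from the weak-type estimate to the strong $L^q$ bound. I would rework the argument along these three steps, taking Proposition \ref{lem:pseudo-poincare} as the input lemma in place of the unavailable pointwise gradient bound.
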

We then prove that this Besov-Sobolev embedding implies the following  isoperimetric inequality:
\begin{proposition}
Assume that $L$ satisfies the  \emph{generalized curvature-dimension inequality} \emph{CD}$(0,\rho_2,\kappa,d)$.  Assume that there exists constants $C>0$ and $D>0$ such that for every $x\in \mathbb{M}$, $R\ge 0$,  $\mu(B(x,R))\ge C R^D$.  For any $1\leq p,q,r < \infty$ with $ \frac{1}{q}=\frac{1}{p}-\frac{r}{qD} $, there exists a constant $C'>0$ such that $\forall f \in C_0^\infty(\mathbb{M})$, we have
\[
\| f \|_q \leq C' \| \sqrt{\Gamma(f)} \|_p^{p/q} ~~\|f\|_r^{1-p/q},
\]
and there exists a constant $C'' >0$ such that for every Caccioppoli set $E\subset \bM$ one has
\begin{align}\label{iso2}
\mu(E)^{\frac{D-1}{D}} \le C'' P(E),
\end{align}
where $P(E)$ denotes the horizontal perimeter of $E$ in $\bM$.
\end{proposition}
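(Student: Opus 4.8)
The plan is to derive the Gagliardo–Nirenberg–Sobolev inequality $\| f \|_q \leq C' \| \sqrt{\Gamma(f)} \|_p^{p/q} \|f\|_r^{1-p/q}$ from the Besov–Sobolev embedding of Theorem 1.1, and then to specialize it to an indicator function to obtain the isoperimetric inequality \eqref{iso2}. The key link is the following comparison between a negative-order Besov norm and a Lebesgue norm under the volume lower bound $\mu(B(x,R)) \ge CR^D$: for $s<0$ there is a constant depending on $C,D,s,r$ such that
\begin{align*}
\| g \|_{B_{\infty,\infty}^{s}} \le C\, \| g \|_r^{\alpha},
\end{align*}
for a suitable exponent $\alpha$ determined by $s$, $r$ and $D$. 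First I would recall from \eqref{Besov_norm} that the $B_{\infty,\infty}^{s}$ norm with $s<0$ is (equivalent to) $\sup_{t>0} t^{-s/2} \| P_t g \|_\infty$; then I would estimate $\| P_t g\|_\infty$ from above by $\|g\|_r$ using the ultracontractivity of $P_t$, which in this setting is a consequence of the on-diagonal heat kernel upper bound $p_t(x,x) \le C t^{-D/2}$ — and this bound itself follows, under CD$(0,\rho_2,\kappa,d)$ together with the volume lower bound, from the results of \cite{Bau2} (a Li–Yau type estimate giving a Gaussian upper bound, hence $\| P_t\|_{1\to\infty} \le C t^{-D/2}$, hence by interpolation $\| P_t \|_{r \to \infty} \le C t^{-D/(2r)}$). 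Balancing the two factors $t^{-s/2}$ and $t^{-D/(2r)}$ and optimizing over small $t$ versus the trivial bound $\|P_t g\|_\infty$ for large $t$ would be unnecessary here since we only want an upper bound; a direct computation gives $\| g\|_{B_{\infty,\infty}^{s}} \lesssim \|g\|_r^{\alpha}$ with $\alpha$ depending on $s,r,D$.

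Next I would choose the parameters. Theorem 1.1 applied with exponents $p$ and $q$ gives $\theta = p/q$ and the Besov index $\theta/(\theta-1) = \frac{p/q}{p/q - 1} = \frac{p}{p-q} < 0$, which is exactly the negative index regime above. Plugging the comparison $\| f\|_{B_{\infty,\infty}^{p/(p-q)}} \lesssim \| f\|_r^{\alpha}$ into the conclusion of Theorem 1.1 yields
\begin{align*}
\| f \|_q \le C \| \sqrt{\Gamma(f)} \|_p^{p/q} \| f \|_r^{\alpha(1-p/q)}.
\end{align*}
It then remains to check that the homogeneity forces $\alpha(1-p/q) = 1 - p/q$, i.e. $\alpha = 1$, precisely when the scaling relation $\frac{1}{q} = \frac{1}{p} - \frac{r}{qD}$ holds; this is a bookkeeping check on the exponent $\alpha = \alpha(p/(p-q), r, D)$ coming from the ultracontractivity step, and it should reduce to the stated constraint after clearing denominators. (Equivalently, one verifies scaling invariance of both sides under $f \mapsto f(\delta \cdot)$ using $\mu(B(x,R)) \asymp R^D$.) This establishes the first displayed inequality of the Proposition.

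For the isoperimetric inequality \eqref{iso2}, I would take $r=1$ and let $q = \frac{D}{D-1}$, $p=1$, which satisfies the constraint $\frac{1}{q} = 1 - \frac{1}{qD}$; the first inequality then reads $\| f\|_{D/(D-1)} \le C' \| \sqrt{\Gamma(f)}\|_1$. Applying this with $f$ an approximation of $\mathbf{1}_E$ for a Caccioppoli set $E$ and passing to the limit, the left side tends to $\mu(E)^{(D-1)/D}$ while $\int_\bM \sqrt{\Gamma(f)}\, d\mu$ tends to the horizontal perimeter $P(E)$ by the very definition of $P(E)$ as the total variation of the horizontal gradient (the standard lower-semicontinuity/relaxation argument). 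The main obstacle is the ultracontractivity input: one must be sure that CD$(0,\rho_2,\kappa,d)$ plus the volume lower bound really does yield the on-diagonal bound $p_t(x,x)\lesssim t^{-D/2}$ uniformly in $x$ — for this I would invoke the parabolic Harnack inequality and Gaussian heat kernel bounds established in \cite{Bau2} under CD$(0,\rho_2,\kappa,d)$, which together with volume doubling (also a consequence of CD$(0,\rho_2,\kappa,d)$) and the hypothesis $\mu(B(x,R))\ge CR^D$ give the claim; the rest of the argument is exponent arithmetic and a routine truncation/approximation for the Caccioppoli set.
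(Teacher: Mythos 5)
Your derivation of the Gagliardo--Nirenberg inequality matches the paper's own proof, which is the implication $(2)\Rightarrow(3)$ of Theorem~\ref{P:char}: the on-diagonal bound $p(x,x,t)\le C t^{-D/2}$ (from the Li--Yau Gaussian estimate plus the volume lower bound) gives $\|P_t\|_{r\to\infty}\lesssim t^{-D/(2r)}$ by Riesz--Thorin, and the constraint $\frac{1}{q}=\frac{1}{p}-\frac{r}{qD}$ is precisely the condition $-\frac{\theta}{2(\theta-1)}-\frac{D}{2r}=0$ that makes $\sup_{t>0}t^{-\theta/(2(\theta-1))}\|P_t f\|_\infty$ finite, hence $\|f\|_{B_{\infty,\infty}^{\theta/(\theta-1)}}\le C\|f\|_r$. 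One small conceptual point: since both norms are $1$-homogeneous in $f$, the exponent $\alpha$ in your proposed bound $\|g\|_{B_{\infty,\infty}^s}\lesssim\|g\|_r^{\alpha}$ can only be $\alpha=1$; the scaling relation among $s$, $r$ and $D$ is not a constraint determining $\alpha$, it is the condition under which the supremum is finite at all.

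Your treatment of the isoperimetric inequality is a genuinely different route from the paper's, and in principle a more direct one. The paper proves $(3)\Leftrightarrow(4)$ in Theorem~\ref{P:char} by the Fleming--Rishel/Maz'ya co-area argument in $BV(\bM)$, whereas you propose to feed approximants of $\mathbf 1_E$ (via Lemma~\ref{P:ag}) straight into the established Gagliardo--Nirenberg inequality; that does give the direction $(3)\Rightarrow(4)$ you need. However, your exponent arithmetic is wrong. With $p=r=1$ the constraint $\frac{1}{q}=1-\frac{1}{qD}$ forces $q=\frac{D+1}{D}$, not $q=\frac{D}{D-1}$. Moreover $1-p/q=\frac{1}{D+1}\neq 0$, so the $\|f\|_1$ factor does not drop out; the inequality you actually obtain, applied to approximants of $\mathbf 1_E$ and passed to the limit using Lemma~\ref{P:ag}, reads
\[
\mu(E)^{D/(D+1)}\ \le\ C'\,P(E)^{D/(D+1)}\,\mu(E)^{1/(D+1)},
\]
which, after raising to the power $(D+1)/D$, rearranges to $\mu(E)^{(D-1)/D}\le (C')^{(D+1)/D}P(E)$. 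So your plan is salvageable and yields the stated isoperimetric inequality, but the intermediate claim $\|f\|_{D/(D-1)}\le C'\|\sqrt{\Gamma(f)}\|_1$ is not an instance of the first part of the proposition: the triple $(p,q,r)=(1,\tfrac{D}{D-1},1)$ does not satisfy the constraint, and even with the corrected $q$ the $L^1$ norm of $f$ persists on the right-hand side.
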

In the isoperimetric inequality (\ref{iso2}) the constant $C''$ we obtain is not sharp but the exponent $\frac{D-1}{D}$ is correct as the example of the Heisenberg group, to which the result applies, shows.  We can observe that in the Euclidean case  the optimal isoperimetric constant can be obtained from the semigroup method  by using Riesz-Sobolev rearrangement type inequalities (see \cite{BLL} and \cite{ledoux-stflour}). But, so far, to the knowledge of the authors, the rearrangement inequality is not available  in the Heisenberg group case. Since the celebrated note of Pansu \cite{Pan}, the problem of the optimal isoperimetric constant on the Heisenberg group is a long-standing open problem (see \cite{CDPT}).

\

In the Section 3, we study  the case where the curvature parameter  $\rho_1$ is positive. In that case, as we stressed it before, the manifold $\M$ needs to be compact and the measure $\mu$ finite. We obtain the following Poincar\'e inequality:

\begin{proposition}
Assume that $L$ satisfies the  \emph{generalized curvature-dimension inequality} \emph{CD}$(\rho_1,\rho_2,\kappa,d)$ with $\rho_1>0$. Let $1\leq p <\infty$. There exists $C= C_p(\rho_1,\rho_2,\kappa,d)>0$ such that for every  $ f\in C_0^\infty (\M)$,
\begin{align*}
 \| f- f_\M \|_p \leq  C  \| \sqrt{\Gamma(f)} \|_p ,
\end{align*}
where $f_\M = \frac{1}{\mu(\M)} \int_\M f d\mu $.
\end{proposition}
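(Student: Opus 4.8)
The plan is to follow M. Ledoux's heat-semigroup method (\cite{ledoux2}, \cite{Led}), adapted to the present framework. Since $\rho_1>0$, the Bonnet--Myers theorem of \cite{Bau2} guarantees that $\M$ is compact and $\mu(\M)<\infty$; together with $L1=0$ this makes $(P_t)$ conservative, so $P_tf\to f_\M$ as $t\to\infty$. Quantitatively, the $L^2$ spectral gap provided by the $L^2$-Poincar\'e inequality of \cite{Bau2}, together with the ultracontractivity coming from the Gaussian heat-kernel bounds of \cite{Bau2} and the finiteness of $\mu$, gives $P_tf\to f_\M$ in every $L^p$. One then writes the telescoping series
\[
f-f_\M=\sum_{k\ge 0}\bigl(P_kf-P_{k+1}f\bigr),
\]
which converges in $L^p$ once the terms are shown to be summable, and estimates each term. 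Writing $P_kf-P_{k+1}f=(I-P_1)(P_kf)$ and using $(I-P_1)g=-\int_0^1 LP_sg\,ds$, the proof reduces to two ingredients: a \emph{subelliptic estimate} controlling $\|LP_sg\|_p$ by the $L^p$-norm of $\sqrt{\Gamma(g)}$ for small $s$, and an \emph{exponential gradient contraction} for large time, which is where $\rho_1>0$ is used.

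For the subelliptic estimate one establishes
\[
\|LP_sg\|_p\le \frac{C_p}{\sqrt s}\,\bigl\|\sqrt{\Gamma(g)}\bigr\|_p,\qquad 0<s\le 1,\ 1\le p<\infty,
\]
a bound that is also at the heart of the Besov--Sobolev estimates of Section 2 and is available in \cite{Bau2}. By duality, for smooth $\psi$ one has $\int(LP_sg)\psi\,d\mu=-\int\Gamma(g,P_s\psi)\,d\mu$, whence by the Cauchy--Schwarz inequality for the bilinear form $\Gamma$ and by H\"older's inequality
\[
\|LP_sg\|_p\le \bigl\|\sqrt{\Gamma(g)}\bigr\|_p\ \sup_{\|\psi\|_{p'}\le 1}\bigl\|\sqrt{\Gamma(P_s\psi)}\bigr\|_{p'},\qquad \tfrac1p+\tfrac1{p'}=1.
\]
The remaining gradient bound $\|\sqrt{\Gamma(P_s\psi)}\|_{q}\le C_q s^{-1/2}\|\psi\|_q$ follows, when $q\ge 2$ (i.e.\ $p\le 2$), directly from the reverse inequality $\Gamma(P_s\psi)\le (C/s)\,P_s(\psi^2)$ of \cite{Bau2} together with the contractivity of $P_s$ on $L^{q/2}$; when $1<q<2$ (i.e.\ $p>2$) it is obtained by interpolating that estimate against its $q=1$ endpoint, which in turn rests on the heat-kernel bounds of \cite{Bau2} and their gradient, Li--Yau type, refinements.

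To make the series summable one invokes, for $\rho_1>0$, the exponential gradient estimate
\[
\bigl\|\sqrt{\Gamma(P_tf)}\bigr\|_p\le C\,e^{-\lambda t}\,\bigl\|\sqrt{\Gamma(f)}\bigr\|_p,\qquad t\ge 1,
\]
which can be deduced from the $L^\infty$ gradient bound of \cite{Bau2} (whose exponential decay uses the Bonnet--Myers diameter bound) together with the $L^2$ decay furnished by the spectral gap, by interpolation. Combining the two ingredients, for $k\ge 1$,
\[
\|P_kf-P_{k+1}f\|_p\le 2C_p\bigl\|\sqrt{\Gamma(P_kf)}\bigr\|_p\le 2C_pC\,e^{-\lambda k}\bigl\|\sqrt{\Gamma(f)}\bigr\|_p,
\]
while the $k=0$ term is bounded by $2C_p\|\sqrt{\Gamma(f)}\|_p$. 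Summing the geometric series yields $\|f-f_\M\|_p\le C_p(\rho_1,\rho_2,\kappa,d)\,\|\sqrt{\Gamma(f)}\|_p$, as claimed.

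The main obstacle is the subelliptic estimate $\|LP_sg\|_p\le C_p s^{-1/2}\|\sqrt{\Gamma(g)}\|_p$ in the full range $1\le p<\infty$. For $p=2$ it is soft (energy identity plus analyticity of $(P_t)$ on $L^2$), and for $p$ near $1$ it is dual to the elementary $L^\infty$ bound $\|\sqrt{\Gamma(P_s\psi)}\|_\infty\le C s^{-1/2}\|\psi\|_\infty$; but for large $p$ (conjugate exponent near $1$) it becomes a genuinely subelliptic analytic fact, morally the $L^p$-boundedness of a truncated Riesz transform, which cannot be extracted from the curvature inequality alone and forces one to use the Gaussian heat-kernel estimates, their gradient refinements, and an interpolation argument. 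A secondary point requiring care is checking that the exponential gradient contraction for $\rho_1>0$ holds in a form involving only $\Gamma$ and not $\Gamma^Z$; this is where the compactness of $\M$ is genuinely used.
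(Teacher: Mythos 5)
Your overall strategy (heat semigroup, pseudo-Poincar\'e, exponential decay from $\rho_1>0$) is in the right family, but it diverges from the paper's proof and, as written, has a genuine gap. The paper's proof is a one-liner: Proposition \ref{lem:pseudo-poincare2} (the $\rho_1>0$ pseudo-Poincar\'e inequality) already gives $\| f-P_t f\|_p \le h(t)\|\sqrt{\Gamma(f)}\|_p$ with $h(t)$ \emph{uniformly bounded in $t$} and converging to a finite limit as $t\to\infty$, because the semigroup gradient bounds $\|\sqrt{\Gamma(P_s g)}\|_{p'}\le c(s)\|g\|_{p'}$ of Proposition \ref{GB2} decay exponentially in $s$ when $\rho_1>0$ (via the $\rho_1>0$ Li--Yau estimate and the reverse Poincar\'e inequality), so the duality integral $\int_0^\infty c(s)\,ds$ converges. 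Combining this with Theorem \ref{T:finite_measure} ($P_t f\to f_\M$ in $L^p$) and letting $t\to\infty$ gives the result immediately -- no telescoping, no separate large-time estimate.

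Two specific issues with your version. First, what you call the ``main obstacle,'' the bound $\|LP_sg\|_p\le C_p s^{-1/2}\|\sqrt{\Gamma(g)}\|_p$, is not an obstacle at all: by exactly the duality you write down, it is equivalent to $\|\sqrt{\Gamma(P_s\psi)}\|_{p'}\lesssim s^{-1/2}\|\psi\|_{p'}$, which is precisely Proposition \ref{GB2} of the paper (available for all $1\le p'\le\infty$, hence all $1\le p<\infty$). Second, and this is the real gap, your telescoping argument hinges on a \emph{gradient-to-gradient} contraction $\|\sqrt{\Gamma(P_tf)}\|_p\le C\,e^{-\lambda t}\|\sqrt{\Gamma(f)}\|_p$. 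This is not proved anywhere in the paper, and you give no proof -- you flag it yourself as a ``secondary point requiring care'' and do not resolve it. In the present sub-Riemannian framework it is genuinely problematic: a Bakry--\'Emery pointwise contraction $\Gamma(P_tf)\le e^{-2\lambda t}P_t\Gamma(f)$ fails under the \emph{generalized} curvature-dimension condition (the vertical term $\Gamma^Z$ must enter), and the proposed deduction ``from the $L^\infty$ gradient bound plus $L^2$ spectral gap by interpolation'' does not produce a bound of the required form involving $\sqrt{\Gamma(f)}$ on the right-hand side rather than $\|f\|$-type quantities. The paper sidesteps this entirely by only ever using gradient bounds of the form $\|\sqrt{\Gamma(P_sg)}\|_q\le c(s)\|g\|_q$ (function in, gradient out) and integrating over $s\in[0,\infty)$; you should do the same and drop the telescoping series.
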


Interestingly, the constant $C$  we obtain is explicit enough and does not depend on $p$ in for $1 \le p <2$ or $2\le p <\infty$.  Also $C$ does not depend on the dimension $d$ when $1 \le p <2$.

The end of Section 3 is then devoted to the study of the isoperimetric constant introduced by Cheeger in \cite{Che} in a Riemannian framework and to the study of the first non zero eigenvalue of $\M$.  Concerning the Cheeger's isoperimetric constant, we prove in particular the following lower bound:

\begin{proposition}
Assume that $L$ satisfies the  \emph{generalized curvature-dimension inequality} \emph{CD}$(\rho_1,\rho_2,\kappa,d)$ with $\rho_1>0$ and that $\mu(\M)=1$. Define
\[
\iota=\inf \frac{P(E)}{\mu(E)},
\]
where the infimum runs over all Caccioppoli sets $E$ such that $\mu(E)\le \frac{1}{2}$. We have then
\[
\iota \ge \frac{1}{2}\sqrt{\frac{\rho_1}{2} } \frac{1}{  1+\frac{2\kappa}{\rho_2}  }.
\]
\end{proposition}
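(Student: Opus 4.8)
The plan is to deduce the Cheeger-type lower bound on $\iota$ from the $L^1$ Poincar\'e inequality established in the preceding Proposition, specialized to $p=1$. Recall that for $\rho_1 > 0$ and $\mu(\M)=1$ we have, for every $f \in C_0^\infty(\M)$,
\begin{align*}
\| f - f_\M \|_1 \leq C_1(\rho_1,\rho_2,\kappa,d)\, \| \sqrt{\Gamma(f)} \|_1,
\end{align*}
and I would track through the proof of that proposition (or its statement, since we are told the constant is explicit) to record that one may take $C_1 = \frac{2}{\sqrt{\rho_1/2}}\left(1 + \frac{2\kappa}{\rho_2}\right)$, or whatever the explicit value is; the factor $\tfrac12$ in the target inequality strongly suggests the $L^1$ Poincar\'e constant is exactly $2\left(1+\tfrac{2\kappa}{\rho_2}\right)\big/\sqrt{\rho_1/2}$.

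Next I would pass from the smooth Poincar\'e inequality to sets of finite perimeter by the standard approximation argument: given a Caccioppoli set $E$ with $\mu(E) \le \tfrac12$, approximate $\mathbf 1_E$ in $L^1$ by functions $f_n \in C_0^\infty(\M)$ with $\|\sqrt{\Gamma(f_n)}\|_1 \to P(E)$ (this is exactly the definition of the horizontal perimeter via relaxation of the $\Gamma$-energy, together with (H.1) to handle compact support, though here $\M$ is compact so support is automatic). Applying the $L^1$ Poincar\'e inequality to each $f_n$ and passing to the limit gives
\begin{align*}
\| \mathbf 1_E - \mu(E) \|_1 \leq C_1\, P(E).
\end{align*}
The left-hand side computes explicitly: $\| \mathbf 1_E - \mu(E) \|_1 = \mu(E)(1-\mu(E)) + (1-\mu(E))\mu(E) = 2\mu(E)(1-\mu(E))$. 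Since $\mu(E) \le \tfrac12$ we have $1 - \mu(E) \ge \tfrac12$, hence $\| \mathbf 1_E - \mu(E) \|_1 \ge \mu(E)$. Combining, $\mu(E) \le C_1 P(E)$, i.e. $\frac{P(E)}{\mu(E)} \ge \frac{1}{C_1}$, and taking the infimum over admissible $E$ yields $\iota \ge 1/C_1 = \frac{1}{2}\sqrt{\frac{\rho_1}{2}}\,\frac{1}{1 + \frac{2\kappa}{\rho_2}}$.

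The main obstacle, and the step requiring the most care, is the approximation of $\mathbf 1_E$ by smooth functions respecting the horizontal energy: one needs that $P(E)$, defined as the total variation of the horizontal gradient of $\mathbf 1_E$, equals $\inf \liminf_n \|\sqrt{\Gamma(f_n)}\|_1$ over sequences $f_n \to \mathbf 1_E$ in $L^1$, and one must know such a recovery sequence exists (not merely a $\liminf$ bound). This is where the structure of the sub-Riemannian BV theory enters; on a compact manifold with the standing hypotheses it follows from mollification via the heat semigroup $P_t$ (using (H.3) to control $\Gamma(P_t f)$) or from the general equivalence of the relaxed and distributional definitions of horizontal perimeter. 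A secondary point to verify is simply the bookkeeping of the explicit constant from the $p=1$ case of the Poincar\'e proposition, making sure the $\tfrac12$ in the final bound is accounted for by $1-\mu(E)\ge\tfrac12$ rather than being absorbed elsewhere.
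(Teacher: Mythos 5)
Your argument is correct and it takes a slightly but genuinely different route from the paper's. The paper does \emph{not} invoke the already-proven $L^1$ Poincar\'e inequality (Proposition 3.6). Instead it goes back to the pseudo-Poincar\'e inequality at finite $t$, applies it to an approximating sequence for $\mathbf 1_E$, lower-bounds $\|P_t\mathbf 1_E - \mathbf 1_E\|_1$ by $\mu(E)-\int_\M(P_{t/2}\mathbf 1_E)^2\,d\mu$, controls the quadratic term by the explicit heat-kernel upper bound $p(x,x,t)\le\bigl(1-e^{-2\rho_1\rho_2 t/3(\rho_2+\kappa)}\bigr)^{-d(1+3\kappa/2\rho_2)}$, and only then sends $t\to\infty$ to land on $\mu(E)(1-\mu(E))\le\sqrt{2/\rho_1}\,(1+2\kappa/\rho_2)\,P(E)$. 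You instead take the $t\to\infty$ limit \emph{first} (that is Proposition 3.6, whose proof uses the ergodicity Theorem 3.4 rather than the heat-kernel bound), apply it to the approximating sequence $f_n\to\mathbf 1_E$ from Lemma 2.5, and compute $\|\mathbf 1_E-\mu(E)\|_1 = 2\mu(E)(1-\mu(E))$ exactly. Your route is shorter and avoids the heat-kernel estimate entirely.

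Two bookkeeping remarks. First, your guessed value $C_1 = 2\sqrt{2/\rho_1}\,(1+2\kappa/\rho_2)$ for the $p=1$ Poincar\'e constant is not what the paper's Proposition 3.6 yields: there the constant for $1\le p<2$ is
\[
C_1 \;=\; \Bigl(\tfrac{2(\rho_2+2\kappa)(\rho_2+\kappa)}{\rho_1\rho_2^2}\Bigr)^{1/2}
\;=\; \sqrt{\tfrac{2}{\rho_1}}\,\sqrt{\bigl(1+\tfrac{2\kappa}{\rho_2}\bigr)\bigl(1+\tfrac{\kappa}{\rho_2}\bigr)},
\]
which is smaller than your guess. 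Since a smaller constant only helps, nothing breaks; but it means your argument in fact proves the stronger bound $\iota\ge\sqrt{\rho_1/2}\,/\sqrt{(1+2\kappa/\rho_2)(1+\kappa/\rho_2)}\ge\sqrt{\rho_1/2}\,/(1+2\kappa/\rho_2)$, i.e.\ twice the stated one. The extra factor of $2$ comes from using the full $L^1$ norm of $\mathbf 1_E-\mu(E)$: the paper lower-bounds $\|P_t\mathbf 1_E-\mathbf 1_E\|_1$ by integrating only over $E^c$, which discards half of the mass. Second, the approximation step is exactly Lemma 2.5 in the paper (Garofalo--Nhieu), so you do not need to re-derive a recovery sequence by heat-semigroup mollification; that lemma supplies precisely the sequence with $\|f_n-\mathbf 1_E\|_1\to 0$ and $\int_\M\sqrt{\Gamma(f_n)}\,d\mu\to P(E)$, and then $(f_n)_\M\to\mu(E)$ lets you pass to the limit in the Poincar\'e inequality as you describe.
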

And concerning the first eigenvalue we prove the following analogue of the celebrated Lichnerowicz' lower bound:
\begin{proposition}
Assume that $L$ satisfies the  \emph{generalized curvature-dimension inequality} \emph{CD}$(\rho_1,\rho_2,\kappa,d)$ with $\rho_1>0$. The first non zero eigenvalue $\lambda_1$ of $-L$ satisfies the estimate
\[
\lambda_1 \ge \frac{\rho_1 \rho_2}{\frac{d-1}{d} \rho_2 +\kappa}.
\]
\end{proposition}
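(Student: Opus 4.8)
The plan is to run the classical Lichnerowicz argument in this generalized setting: test the curvature-dimension inequality against an eigenfunction, integrate over $\M$, and exploit the free parameter $\nu$ in CD$(\rho_1,\rho_2,\kappa,d)$ to absorb the terms coming from $\Gamma^Z$. Since $\rho_1>0$, the Bonnet--Myers theorem of \cite{Bau2} guarantees that $\M$ is compact and $\mu$ finite, so $-L$ has discrete spectrum, with $0$ a simple eigenvalue because $\M$ is connected. Let $\lambda_1>0$ be the first non zero eigenvalue and $f$ an associated eigenfunction; by hypoellipticity of the subelliptic operator $L$, $f$ is smooth, hence $f\in C^\infty(\M)=C_0^\infty(\M)$, so CD$(\rho_1,\rho_2,\kappa,d)$ applies to it. Throughout, $Lf=-\lambda_1 f$.

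The first step is to record the integrated identities that follow from compactness, self-adjointness of $L$ and $L1=0$. From $\int_\M L(\cdot)\,d\mu=0$ one gets $\int_\M L\Gamma(f)\,d\mu=\int_\M L\Gamma^Z(f)\,d\mu=0$, and hence, integrating \eqref{gamma2} and \eqref{gamma2Z},
\[
\int_\M\Gamma_2(f)\,d\mu=\int_\M(Lf)^2\,d\mu=\lambda_1^2\|f\|_2^2,\qquad\int_\M\Gamma_2^Z(f)\,d\mu=-\int_\M\Gamma^Z(f,Lf)\,d\mu=\lambda_1\int_\M\Gamma^Z(f)\,d\mu,
\]
together with $\int_\M\Gamma(f)\,d\mu=-\int_\M fLf\,d\mu=\lambda_1\|f\|_2^2$.

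Next I would integrate CD$(\rho_1,\rho_2,\kappa,d)$ applied to $f$ over $\M$, keeping $\nu>0$ free, and substitute these identities. Writing $A=\|f\|_2^2>0$ and $B=\int_\M\Gamma^Z(f)\,d\mu\ge 0$, and dividing by $\lambda_1>0$, the resulting inequality takes the form
\[
\frac{d-1}{d}\,\lambda_1 A+\Big(\nu-\frac{\rho_2}{\lambda_1}\Big)B\ \ge\ \Big(\rho_1-\frac{\kappa}{\nu}\Big)A .
\]
The key step is the choice $\nu=\rho_2/\lambda_1$, which annihilates the a priori uncontrolled term $B$; dividing by $A$ then gives $\frac{d-1}{d}\lambda_1\ge\rho_1-\kappa\lambda_1/\rho_2$, i.e. $\lambda_1\big(\tfrac{d-1}{d}+\tfrac{\kappa}{\rho_2}\big)\ge\rho_1$, which is exactly the claimed bound $\lambda_1\ge \rho_1\rho_2/(\tfrac{d-1}{d}\rho_2+\kappa)$.

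The algebra is short, so the genuinely delicate points are the functional-analytic preliminaries: that $-L$ really has a discrete spectrum, that the eigenfunction is regular enough to be inserted into the curvature-dimension inequality, and that the integrations by parts are legitimate — all of which rest on compactness of $\M$, the subelliptic regularity theory for $L$, and essential self-adjointness from (H.1). No case analysis on whether $B$ vanishes is needed, precisely because the choice $\nu=\rho_2/\lambda_1$ removes the $B$-term; letting $\nu\to\infty$ instead (legitimate only when $B=0$) would give the stronger estimate $\lambda_1\ge\frac{d}{d-1}\rho_1$, consistent with the stated one.
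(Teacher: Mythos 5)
Your proposal is correct and follows essentially the same approach as the paper: apply the curvature-dimension inequality to an eigenfunction, integrate over the compact manifold $\M$, and choose $\nu=\rho_2/\lambda_1$ so that the uncontrolled $\int_\M \Gamma^Z(f)\,d\mu$ term is annihilated. The only differences are presentational — you spell out the integrated identities separately and divide by $\lambda_1$ one step earlier, whereas the paper carries $\lambda_1^2$ through to the final displayed inequality — and you make explicit the standard preliminaries (compactness via Bonnet--Myers, discrete spectrum, eigenfunction smoothness by hypoellipticity) that the paper leaves implicit.
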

\

To conclude this introduction, let us now turn to the fundamental question of examples to which the above results apply. We refer the reader to \cite{Bau2} for more details about most of the examples we discuss below.

Besides Laplace-Beltrami operators on complete Riemannian manifolds with Ricci curvature bounded from below, a wide class of examples is given by sub-Laplacians on  sub-Riemannian manifold with transverse symmetries.  Sub-Laplacians on  Sasakian manifolds form a special and interesting subclasses that we quickly describe below.
Let $\mathbb{M}$ be a complete strictly pseudo convex  CR Sasakian manifold with real dimension $2n +1$. Let $\theta$ be a pseudo-hermitian form on $\mathbb{M}$ with respect to which the Levi form is positive definite. The kernel of $\theta$ determines an horizontal bundle $\mathcal H$. Denote now  by $T$ the Reeb vector field on $\bM$, i.e., the characteristic direction of $\theta$.
We recall that the CR manifold $(\bM,\theta)$ is called Sasakian if $T$ is a sub-Riemannian Killing field. For instance the standard CR structures on the  Heisenberg group $\mathbb{H}_{2n+1}$ and the sphere $\mathbb{S}^{2n+1}$ are Sasakian. On CR manifolds, there is a canonical subelliptic diffusion operator which is called the CR sub-Laplacian. It plays an analogue in CR geometry as the Laplace-Beltrami operator does in Riemannian geometry.
In this framework we have the following result that shows the relevance of the generalized curvature dimension inequality.
\begin{proposition}\cite{Bau2}
Let $(\bM,\theta)$ be a complete \emph{CR} Sasakian manifold  with real dimension $2n+1$. If
for every $x\in \bM$ the Tanaka-Webster Ricci tensor satisfies the bound
\[
\emph{Ric}_x(v,v)\ \ge \rho_1|v|^2,
\]
for every horizontal vector $v\in \mathcal H_x$, then, for the CR sub-Laplacian of $\bM$,  the curvature-dimension inequality \emph{CD}$(\rho_1,\frac{d}{4},1,d)$ holds with $d = 2n$ and $\Gamma^Z(f)=(Tf)^2$ and  the hypothesis (H.1),(H.2),(H.3) are satisfied.
\end{proposition}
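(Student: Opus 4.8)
The plan is to derive the inequality \emph{CD}$(\rho_1,\tfrac d4,1,d)$ from the pseudo-Hermitian Bochner--Weitzenb\"ock identity on $\M$, the Sasakian hypothesis being precisely what renders the torsion contributions harmless. I would work with the Tanaka--Webster connection $\nabla$ attached to $\theta$: the horizontal bundle is $\mathcal H=\ker\theta$, carrying the complex structure $J$ and the Levi metric, $T$ is the Reeb field, and the CR sub-Laplacian is $L=\operatorname{tr}_{\mathcal H}\nabla^2=\sum_{i=1}^{2n}\bigl(X_i^2-\nabla_{X_i}X_i\bigr)$ in any local horizontal orthonormal frame $\{X_i\}$, so that $\Gamma(f)=|\nh f|^2$. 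The Sasakian condition is exactly the vanishing of the pseudo-Hermitian torsion of $\nabla$, equivalently that $T$ is a sub-Riemannian Killing field (a transverse symmetry); consequently $[L,T]=0$, and $T$ acts as a derivation compatible with $\Gamma$ and $\Gamma^Z$, so that $T\Gamma(f)=2\Gamma(f,Tf)$ and $T\Gamma^Z(f)=2\Gamma^Z(f,Tf)$.

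With $\Gamma^Z(f)=(Tf)^2$ and $[L,T]=0$, formula \eqref{gamma2Z} gives at once $\Gamma^Z_2(f)=\Gamma(Tf)=|\nh(Tf)|^2\ge 0$. The heart of the matter is the pseudo-Hermitian Bochner--Weitzenb\"ock computation for $\Gamma_2(f)$: tracing the Tanaka--Webster structure equations one expresses $\Gamma_2(f)$ through the squared horizontal Hessian $\|\nh^2 f\|_{HS}^2$, the horizontal Tanaka--Webster Ricci form $\ri(\nh f,\nh f)$, the pseudo-Hermitian torsion (which vanishes here by the Sasakian assumption), a cross term bilinear in $\nh f$ and $\nh(Tf)$, and a multiple of $(Tf)^2$. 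Following \cite{Bau2}, rather than estimating each piece in isolation I would add $\nu\Gamma_2^Z(f)=\nu|\nh(Tf)|^2$ and complete the square, rewriting $\Gamma_2(f)+\nu\Gamma_2^Z(f)$ as the squared Hilbert--Schmidt norm of a suitable $\nu$-dependent symmetric modification of $\nh^2 f$, plus $\ri(\nh f,\nh f)$, plus $-\tfrac1\nu\Gamma(f)$, plus $\tfrac d4\Gamma^Z(f)$, plus a manifestly nonnegative remainder; it is exactly this algebra that produces the values $\kappa=1$ and $\rho_2=\tfrac d4$.

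Invoking then $\|\nh^2 f\|_{HS}^2\ge\tfrac1{2n}\bigl(\operatorname{tr}\nh^2 f\bigr)^2=\tfrac1d(Lf)^2$ (Cauchy--Schwarz applied to the trace, $d=2n$ being the rank of $\mathcal H$) together with the hypothesis $\ri_x(v,v)\ge\rho_1|v|^2$ for horizontal $v$, one concludes
\[
\Gamma_2(f)+\nu\,\Gamma_2^Z(f)\ \ge\ \tfrac1d(Lf)^2+\Bigl(\rho_1-\tfrac1\nu\Bigr)\Gamma(f)+\tfrac d4\,\Gamma^Z(f)\qquad\text{for every }\nu>0,
\]
which is \emph{CD}$(\rho_1,\tfrac d4,1,d)$. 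It then remains to verify (H.1)--(H.3). Condition (H.1) is the standard cutoff--exhaustion argument, available because $(\M,d)$ is complete. Condition (H.2) is immediate from the fact that $T$ is a sub-Riemannian Killing field: $\Gamma(f,\Gamma^Z(f))=2(Tf)\Gamma(f,Tf)=(Tf)\,T\Gamma(f)=\Gamma^Z(f,\Gamma(f))$. Finally (H.3) follows from the curvature--dimension inequality just established together with completeness, as in \cite{Bau2}.

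The main obstacle is the pseudo-Hermitian Bochner computation itself and the attendant bookkeeping: one must keep track of every structural coefficient so that the square completes cleanly and the output is exactly \emph{CD}$(\rho_1,\tfrac d4,1,d)$, rather than some inequality with a worse curvature constant, a larger $\kappa$, or a smaller $\rho_2$.
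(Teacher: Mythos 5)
This proposition is stated in the paper as a citation to \cite{Bau2}; the paper itself supplies no proof, so there is no internal argument for me to compare against. What you have written is a sketch of the proof given in \cite{Bau2}, and as a sketch it is faithful to that argument: the Sasakian hypothesis as vanishing pseudo-Hermitian torsion, the commutation $[L,T]=0$, the consequent identity $\Gamma_2^Z(f)=\Gamma(Tf)$, the pseudo-Hermitian Bochner--Weitzenb\"ock decomposition of $\Gamma_2(f)$, and the Young-type completion of squares in the cross term $2\langle J\nh f,\nh(Tf)\rangle$ that cancels $\nu|\nh(Tf)|^2$ and produces $\kappa=1$. Your verification of (H.2) is also correct: $T$ sub-Riemannian Killing gives $[T,X_i]=\sum_j a_{ij}X_j$ with $(a_{ij})$ antisymmetric, hence $T\Gamma(f)=2\Gamma(f,Tf)$ and $\Gamma(f,\Gamma^Z(f))=2(Tf)\Gamma(f,Tf)=(Tf)T\Gamma(f)=\Gamma^Z(f,\Gamma(f))$. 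The remarks on (H.1) and (H.3) are in line with \cite{Bau2}.

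The one genuine gap is the one you yourself flag: the Bochner--Weitzenb\"ock computation is announced but not carried out, and it is precisely this computation that produces the nontrivial coefficients. In the Sasakian case the identity reads (in the notation of the paper)
\begin{equation*}
\Gamma_2(f)=\|\nh^2f\|_{HS}^2+\mathrm{Ric}(\nh f,\nh f)+2\langle J\nh f,\nh(Tf)\rangle+\tfrac{d}{4}(Tf)^2,
\end{equation*}
and the factor $\tfrac{d}{4}$ in front of $(Tf)^2$ --- which becomes $\rho_2$ after the Young inequality absorbs the cross term --- is the non-obvious output of tracing the structure equations of the Tanaka--Webster connection with the CR normalisation. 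Without exhibiting this identity the value $\rho_2=\tfrac d4$ (and hence the claimed \emph{CD}$(\rho_1,\tfrac d4,1,d)$) is asserted rather than proved. With that identity in hand, the remainder of your argument (Cauchy--Schwarz for the Hessian trace, the Ricci lower bound, and Young with parameter $\nu$) does deliver exactly the stated inequality. So the proposal is a correct roadmap matching \cite{Bau2}, but it stops short of the step that actually pins down the constants.
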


In addition to sub-Laplacians on  Heisenberg groups, more generally, the sub-Laplacian on any Carnot group of step 2 has been shown to satisfy the generalized curvature-dimension inequality \emph{CD}$(0,\rho_2,\kappa,d)$, for some values of the parameters $\rho_2$ and $\kappa$.

\section{The case $\rho_1=0$}

Throughout the Section 2, we assume that $L$ satisfies the  \emph{generalized curvature-dimension inequality} \emph{CD}$(0,\rho_2,\kappa,d)$ with  $\rho_2 >0$ and $\kappa \ge 0$.

The main tool to prove the theorems mentioned in the introduction, is the heat semigroup $P_t=e^{tL}$, which is defined using the spectral theorem. Since $L$ satisfies the curvature dimension inequality,  this semigroup is stochastically complete (see \cite{Bau2}), i.e. $P_t 1 =1$. Moreover, thanks to the hypoellipticity of $L$, for $f \in L^p(\bM)$,  $1 \le p \le \infty$, the function $(t,x) \rightarrow P_t f(x)$ is
smooth on $\mathbb{M}\times (0,\infty) $ and
\[ P_t f(x)  = \int_{\mathbb M} p(x,y,t) f(y) d\mu(y)\] where $p(x,y,t) = p(y,x,t) > 0$ is the so-called heat
kernel associated to $P_t$.

 A key ingredient in the following  analysis is the following gradient bound that was proved in \cite{Bau2}.

\begin{theorem}[Li-Yau type gradient estimate with $\rho_1=0$]\label{T:ge}
 Let $f \in C_0^\infty(\bM)$, $f  \ge 0$, $f \not\equiv 0$, then the following inequality holds for $t>0$:
\[
\Gamma (\ln P_t f)  \le
\left(1+\frac{3\kappa}{2\rho_2}\right)
\frac{LP_t f}{P_t f} +\frac{d\left(
1+\frac{3\kappa}{2\rho_2}\right)^2}{2t}.
\]
\end{theorem}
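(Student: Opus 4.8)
The plan is to run the heat-semigroup interpolation argument of Bakry and Ledoux (the ``logarithmic Sobolev form'' of the Li--Yau inequality), enlarged so as to carry along the auxiliary form $\Gamma^Z$, and to choose the interpolation weights so that the generalized curvature dimension inequality, applied with a carefully tuned value of $\nu$, reproduces exactly the constants in the statement. Fix $T>0$ and $x\in\bM$. Since $f\ge 0$, $f\not\equiv 0$ and the heat kernel is strictly positive, $P_\tau f>0$ for $\tau>0$; to handle the zero set of $f$ at the time endpoint we introduce, for $\ve>0$, the regularization $g_s:=P_{T-s}f+\ve$, which is smooth on $\bM$, bounded below by $\ve$, and still satisfies $\p_s g_s=-Lg_s$. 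For $s\in[0,T]$ put
\[
\phi_1(s)=P_s\!\bigl(g_s\,\Gamma(\ln g_s)\bigr)(x),\qquad \phi_2(s)=P_s\!\bigl(g_s\,\Gamma^Z(\ln g_s)\bigr)(x),
\]
and $\Phi(s)=a(s)\phi_1(s)+b(s)\phi_2(s)$ with weights $a,b\in C^1([0,T])$, $a,b\ge0$, to be fixed. Note $\phi_1,\phi_2\ge0$, that $\phi_1(0)=(P_Tf(x)+\ve)\,\Gamma(\ln(P_Tf+\ve))(x)$, and that (by stochastic completeness, $P_s1=1$) $P_s(g_s)=P_Tf+\ve$ is independent of $s$.

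The computational core consists of the two identities $\phi_1'(s)=2P_s(g_s\,\Gamma_2(\ln g_s))$ and $\phi_2'(s)=2P_s(g_s\,\Gamma_2^Z(\ln g_s))$. They follow from $\tfrac{d}{ds}P_s(H_s)=P_s(LH_s+\p_sH_s)$ applied to $H_s=g_s\Gamma(\ln g_s)=\Gamma(g_s)/g_s$ (respectively $\Gamma^Z(g_s)/g_s$) together with the diffusion chain rule: for $\phi_1$ one gets $LH_s+\p_sH_s=2g_s\Gamma_2(\ln g_s)$ as a pointwise identity, and for $\phi_2$ the same manipulation leaves a remainder proportional to $\Gamma\bigl(g_s,\Gamma^Z(g_s)\bigr)-\Gamma^Z\bigl(g_s,\Gamma(g_s)\bigr)$, which vanishes by (H.2). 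The interchanges of $P_s$ with $\tfrac{d}{ds}$ and with $L$ are justified by the smoothness of $(t,y)\mapsto P_tf(y)$ and the uniform gradient bounds of (H.3) --- this is exactly the role of that assumption. Differentiating $\Phi$, applying \emph{CD}$(0,\rho_2,\kappa,d)$ to the genuine $C^\infty$ function $\ln g_s$ with $\nu=\nu(s):=b(s)/a(s)$, multiplying by $2a(s)$ and applying the positive map $P_s(g_s\,\cdot)$, one obtains
\[
\Phi'(s)\ \ge\ \Bigl(a'-\tfrac{2\kappa a^{2}}{b}\Bigr)\phi_1+\bigl(b'+2\rho_2 a\bigr)\phi_2+\tfrac{2a}{d}\,P_s\!\bigl(g_s(L\ln g_s)^2\bigr).
\]
Since $g_sL\ln g_s=Lg_s-g_s\Gamma(\ln g_s)$, so that $P_s(g_sL\ln g_s)=LP_Tf-\phi_1(s)$, Cauchy--Schwarz for the measure $p(x,\cdot,s)g_s\,d\mu$ (of total mass $P_Tf(x)+\ve$) gives $P_s(g_s(L\ln g_s)^2)(x)\ge (LP_Tf(x)-\phi_1(s))^2/(P_Tf(x)+\ve)$.

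It remains to choose $a,b$. Imposing $b'+2\rho_2 a=0$ with $b(T)=0$, i.e. $b(s)=2\rho_2\int_s^T a$, annihilates the $\phi_2$-term and keeps $b\ge0$; then the choice $a(s)=(T-s)^2$ yields $b(s)=\tfrac{2\rho_2}{3}(T-s)^3$ and makes $\alpha(s):=a'(s)-\tfrac{2\kappa a(s)^2}{b(s)}=-2D(T-s)$, with $D:=1+\tfrac{3\kappa}{2\rho_2}$. Dividing by $p:=P_Tf(x)+\ve$ and setting $\psi=\Phi/p$, $F=\phi_1/p$, $a_0=LP_Tf(x)/p$, one gets $\psi'(s)\ge\alpha(s)F(s)+\tfrac{2a(s)}{d}(a_0-F(s))^2$; minimizing the right-hand side over $F\in\R$ (a quadratic with positive leading coefficient) produces the scalar inequality $\psi'(s)\ge \alpha(s)a_0-\tfrac{d\,\alpha(s)^2}{8a(s)}$, whose right-hand side is bounded on $[0,T)$. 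Integrating over $[0,T-\eta]$ and letting $\eta\to0$, using $\psi(s)\to0$ as $s\to T$ (because $a(T)=b(T)=0$ while $F(s),\phi_2(s)$ stay bounded by (H.3)), together with $\psi(0)\ge a(0)F(0)=T^2F(0)$ and the elementary evaluations $\int_0^T\alpha=-DT^2$, $\int_0^T\alpha^2/a=4D^2T$, one arrives at $F(0)\le Da_0+\tfrac{dD^2}{2T}$, that is
\[
\Gamma\!\bigl(\ln(P_Tf+\ve)\bigr)(x)\ \le\ D\,\frac{LP_Tf(x)}{P_Tf(x)+\ve}+\frac{dD^2}{2T}.
\]
Letting $\ve\to0$ (legitimate since $P_Tf(x)>0$) and recalling that $x$ and $T$ are arbitrary gives the theorem.

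The one genuinely delicate point is the selection of the weights: $b$ must be designed to cancel the $\Gamma^Z$-contribution, and $a$ must be tuned so that the coefficient $\alpha$ is proportional to $(T-s)$ with precisely the right constant --- which is what $a(s)=(T-s)^2$ accomplishes, and which is what collapses the final constant to $1+\tfrac{3\kappa}{2\rho_2}$ (any other exponent or profile gives a strictly larger constant). The remaining ingredients are comparatively routine: the pointwise $\Gamma_2$-identities, the use of (H.2) to discard the cross term, the use of (H.3) to license the semigroup manipulations and to kill the endpoint contribution, and the $\ve$-regularization to bypass the zero set of $f$.
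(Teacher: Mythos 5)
The paper does not actually reprove this theorem; it imports it verbatim from Baudoin--Garofalo (the reference \cite{Bau2}). There is therefore no in-paper proof to compare against, but your argument is correct and is precisely the Baudoin--Garofalo semigroup-interpolation scheme: the functionals $\phi_1(s)=P_s\bigl(g_s\Gamma(\ln g_s)\bigr)$, $\phi_2(s)=P_s\bigl(g_s\Gamma^Z(\ln g_s)\bigr)$ with the cancellation via (H.2), the choice $b'+2\rho_2 a=0$ that kills the $\phi_2$ coefficient, and the profile $a(s)=(T-s)^2$, $b(s)=\tfrac{2\rho_2}{3}(T-s)^3$ producing $a'-2\kappa a^2/b=-2\bigl(1+\tfrac{3\kappa}{2\rho_2}\bigr)(T-s)$. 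This is also exactly the template the paper itself redeploys later (see the proof of Proposition \ref{reverse_poincare2}, which is the reverse Poincar\'e analogue with a different weight $b$), so your route is the paper's route. The Cauchy--Schwarz step with respect to the measure $p(x,\cdot,s)\,g_s\,d\mu$ of total mass $P_Tf(x)+\ve$, the pointwise minimization over $F$ yielding $\alpha a_0-\tfrac{d\alpha^2}{8a}$, and the endpoint evaluations $\int_0^T\alpha=-DT^2$, $\int_0^T\alpha^2/a=4D^2T$ all check out and reproduce the constants $D=1+\tfrac{3\kappa}{2\rho_2}$ and $\tfrac{dD^2}{2T}$. The $\ve$-regularization and the use of (H.3) to bound $\phi_1,\phi_2$ near $s=T$ (so that $a(T)=b(T)=0$ forces $\Phi(T)=0$) are handled correctly.

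One small presentational remark: you discard the nonnegative term $b(0)\phi_2(0)/p$ from $\psi(0)$, which is exactly why the final inequality contains no $\Gamma^Z$ term on the left; it is worth flagging that this is a one-sided estimate and that keeping the term would actually give a slightly stronger conclusion involving $\Gamma^Z(\ln P_Tf)$, as is done in \cite{Bau2} before specializing. This does not affect correctness.
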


\subsection{Gradient bounds for the heat semigroup}

\begin{proposition}\label{GB1}

Let  $f \in C_0^\infty(\bM)$.
\begin{itemize}
\item If $1 \le p < 2$, then for every $t >0$,
\[
\left\| \sqrt{\Gamma(P_t f)} \right\|_p  \leq \frac{ 1+\frac{3\kappa}{2\rho_2} }{ \sqrt{ 1+(p-1) \left(1+\frac{3\kappa}{2\rho_2}\right)} }\sqrt{ \frac{d}{2t}}  \| f\|_p.
\]

\item If $2 \le p \le +\infty$,  then for every $t>0$,
\[
\left\| \sqrt{\Gamma(P_t f)} \right\|_p  \leq \sqrt{ \frac{ 1+\frac{2\kappa}{\rho_2} }{2t } }   \| f\|_p.
\]
\end{itemize}
\end{proposition}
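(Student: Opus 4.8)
The plan is to treat the two ranges of $p$ by two different heat‑semigroup arguments. For $2\le p\le\infty$ I will use the generalized curvature‑dimension inequality directly through a monotone interpolation functional (in the spirit of \cite{Bau2}); for $1\le p<2$ I will use the Li‑Yau estimate of Theorem \ref{T:ge} together with one integration by parts and one application of H\"older's inequality. All differentiations under $P_s$ and all integrations by parts will be justified by the cut‑offs $h_k$ of (H.1), stochastic completeness and (H.3); I will not dwell on these routine points.

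\textbf{The range $2\le p\le\infty$.} Fix $t>0$, $f\in C_0^\infty(\bM)$, put $B=1+\frac{2\kappa}{\rho_2}$, and for $s\in[0,t]$ set
\[
\psi(s)=P_s\big((P_{t-s}f)^2\big),\qquad \phi_1(s)=P_s\big(\Gamma(P_{t-s}f)\big),\qquad \phi_2(s)=P_s\big(\Gamma^Z(P_{t-s}f)\big),
\]
so that $\psi,\phi_1,\phi_2\ge 0$, $\psi'=2\phi_1$, $\phi_1'=2P_s(\Gamma_2(P_{t-s}f))$, $\phi_2'=2P_s(\Gamma_2^Z(P_{t-s}f))$. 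Consider the functional
\[
H(s)=\tfrac{B}{2}\,\psi(s)+(t-s)\,\phi_1(s)+\rho_2(t-s)^2\,\phi_2(s).
\]
Differentiating, and inserting CD$(0,\rho_2,\kappa,d)$ with the parameter choice $\nu=\rho_2(t-s)$ into the combination $(t-s)\phi_1'(s)+\rho_2(t-s)^2\phi_2'(s)$, the $\phi_2$‑contributions cancel exactly, the coefficient of $\phi_1$ collapses to $B-1-\frac{2\kappa}{\rho_2}=0$, and one is left with
\[
H'(s)\ \ge\ \tfrac{2(t-s)}{d}\,P_s\big((LP_{t-s}f)^2\big)\ \ge\ 0 .
\]
Hence $H(0)\le H(t)$; since $H(t)=\tfrac{B}{2}P_t(f^2)$ and $H(0)=\tfrac{B}{2}(P_tf)^2+t\,\Gamma(P_tf)+\rho_2t^2\Gamma^Z(P_tf)$, discarding the nonnegative terms on the left yields the pointwise bound $t\,\Gamma(P_tf)\le \tfrac{B}{2}P_t(f^2)$. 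For $p=\infty$ this is exactly $\Gamma(P_tf)\le \tfrac{B}{2t}\|f\|_\infty^2$; for $2\le p<\infty$ one raises to the power $p/2$, integrates over $\bM$, and uses that $P_t$ is a contraction on $L^{p/2}(\bM)$ (since $p/2\ge1$) to get $\|\sqrt{\Gamma(P_tf)}\|_p^{p}\le \big(\tfrac{B}{2t}\big)^{p/2}\|P_t(f^2)\|_{p/2}^{p/2}\le \big(\tfrac{B}{2t}\big)^{p/2}\|f\|_p^{p}$, which is the second assertion.

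\textbf{The range $1\le p<2$.} For $f\ge 0$, $f\not\equiv0$ the argument below gives the stated constant exactly; the general case follows by applying it to $f^{+}$ and $f^{-}$ and using the triangle inequality $\sqrt{\Gamma(P_tf)}\le\sqrt{\Gamma(P_tf^{+})}+\sqrt{\Gamma(P_tf^{-})}$ (lossless when $p=1$). So let $f\ge0$, $f\not\equiv0$, hence $P_tf>0$, and put $a=1+\frac{3\kappa}{2\rho_2}$. Theorem \ref{T:ge} reads $\Gamma(P_tf)\le a\,(P_tf)(LP_tf)+\tfrac{a^2d}{2t}(P_tf)^2$; multiplying by $(P_tf)^{p-2}$ and integrating,
\[
\int_{\bM}(P_tf)^{p-2}\Gamma(P_tf)\,d\mu\ \le\ a\int_{\bM}(P_tf)^{p-1}LP_tf\,d\mu+\tfrac{a^2d}{2t}\int_{\bM}(P_tf)^{p}\,d\mu .
\]
Integration by parts and the chain rule give $\int(P_tf)^{p-1}LP_tf\,d\mu=-(p-1)\int(P_tf)^{p-2}\Gamma(P_tf)\,d\mu$, while $\int(P_tf)^{p}\,d\mu\le\|f\|_p^{p}$; rearranging,
\[
\int_{\bM}(P_tf)^{p-2}\Gamma(P_tf)\,d\mu\ \le\ \frac{a^2d}{2t\,(1+(p-1)a)}\,\|f\|_p^{p} .
\]
Finally, write $\Gamma(P_tf)^{p/2}=\big[(P_tf)^{p-2}\Gamma(P_tf)\big]^{p/2}(P_tf)^{p(2-p)/2}$ and apply H\"older's inequality with conjugate exponents $\tfrac2p$ and $\tfrac2{2-p}$:
\[
\int_{\bM}\Gamma(P_tf)^{p/2}\,d\mu\ \le\ \Big(\int_{\bM}(P_tf)^{p-2}\Gamma(P_tf)\,d\mu\Big)^{p/2}\Big(\int_{\bM}(P_tf)^{p}\,d\mu\Big)^{(2-p)/2}\ \le\ \Big(\frac{a^2d}{2t\,(1+(p-1)a)}\Big)^{p/2}\|f\|_p^{p} ,
\]
and taking $p$‑th roots gives exactly the first assertion, with constant $\frac{a}{\sqrt{1+(p-1)a}}\sqrt{\frac{d}{2t}}$.

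\textbf{Where the difficulty lies.} The algebra above amounts to two short identities; the real work, as usual in this circle of ideas, is to justify the computations on a non‑compact $\bM$. One must verify that $s\mapsto H(s)$ is differentiable on $(0,t)$ with the claimed derivative (here (H.2), (H.3) and the hypoellipticity of $L$ are used) and — the most delicate point — that the integration by parts $\int(P_tf)^{p-1}LP_tf\,d\mu=-(p-1)\int(P_tf)^{p-2}\Gamma(P_tf)\,d\mu$ holds with no boundary term and with all integrals finite. This is done by integrating by parts against $h_k^{2}(P_tf)^{p-1}$, expanding $\Gamma$, and letting $k\to\infty$ using $\|\Gamma(h_k)\|_\infty\to0$, stochastic completeness, (H.3) and dominated convergence; since $(P_tf)^{p-2}$ is singular where $P_tf$ is small when $p<2$, one first replaces $(P_tf)^{p-2}$ by $(P_tf+\varepsilon)^{p-2}$ and passes to the limit $\varepsilon\downarrow0$.
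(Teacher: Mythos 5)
Your proof is correct, and for the range $1\le p<2$ it follows the paper's proof essentially verbatim: multiply the Li--Yau bound by $(P_tf)^{p-2}$, integrate, perform the single integration by parts $\int(P_tf)^{p-1}LP_tf\,d\mu=-(p-1)\int(P_tf)^{p-2}\Gamma(P_tf)\,d\mu$, use $L^p$-contractivity, then H\"older with exponents $\tfrac2p$ and $\tfrac2{2-p}$. The reduction to $f\ge0$ via $f^\pm$ is also the one in the paper; you rightly flag that the triangle inequality is lossless only at $p=1$ (the paper's assertion that $\|f\|_p=\|f^+\|_p+\|f^-\|_p$ is actually only true for $p=1$; for $1<p<2$ one has $\|f\|_p^p=\|f^+\|_p^p+\|f^-\|_p^p$, and both arguments pick up a benign factor of at most $2^{1-1/p}$).

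Where you genuinely diverge is the range $2\le p\le\infty$. The paper simply invokes the reverse Poincar\'e (Caccioppoli-type) inequality
\[
\Gamma(P_tf)+\rho_2 t\,\Gamma^Z(P_tf)\le\frac{1+\tfrac{2\kappa}{\rho_2}}{2t}\bigl(P_t(f^2)-(P_tf)^2\bigr)
\]
as a quoted result from \cite{BB}, and then takes $L^{p/2}$-norms. You instead rederive that inequality from scratch via the Bakry--\'Emery interpolation functional
\[
H(s)=\tfrac{B}{2}P_s\bigl((P_{t-s}f)^2\bigr)+(t-s)\,P_s\bigl(\Gamma(P_{t-s}f)\bigr)+\rho_2(t-s)^2\,P_s\bigl(\Gamma^Z(P_{t-s}f)\bigr),
\]
choosing $\nu=\rho_2(t-s)$ in the curvature-dimension inequality so that the $\phi_2$-terms and (with $B=1+\tfrac{2\kappa}{\rho_2}$) the $\phi_1$-terms cancel exactly, leaving $H'\ge0$. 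Your computation is correct and reproduces the inequality with exactly the constant the paper quotes. This buys you self-containment: the $p\ge2$ case no longer rests on an external reference, and the functional $H$ makes transparent where the constant $1+\tfrac{2\kappa}{\rho_2}$ comes from. The only thing you lose relative to citing \cite{BB} is brevity, and of course you must justify the differentiability of $H$ and the validity of the comparison argument, which you correctly attribute to (H.2), (H.3) and the standard cutoff/regularization machinery of (H.1). No gap in the mathematics; only a different packaging of the $p\ge2$ ingredient.
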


\begin{proof}
Suppose that $1\leq p < 2$.

By Theorem \ref{T:ge}, for $f\in C_0^\infty (\M)$, $f\geq0$, $f\not\equiv 0$, $t>0$,
\[
 (P_t f)^{p-2} \Gamma(P_t f) \leq \frac{D}{d} (P_t f)^{p-1} ( L  P_t f ) + \frac{D^2}{2td} (P_t f)^p ,
\]
where $D=d(1+\frac{3\kappa}{2\rho_2} )$. It follows that
\begin{align*}
 \int_\M (P_t f)^{p-2} \Gamma(P_t f) & d\mu  \leq \frac{D}{d} \int_\M  (P_t f)^{p-1} ( L P_t f ) d\mu + \frac{D^2}{2td} \int_\M (P_t f)^p d\mu \\
  & = - \frac{D}{d} \int_\M \Gamma( (P_t f)^{p-1} ,  P_t f  ) d\mu + \frac{D^2}{2td} \int_\M (P_t f)^p d\mu \\
  & = - \frac{D}{d} \int_\M (p-1) (P_t f)^{p-2} \Gamma( P_t f ) d\mu + \frac{D^2}{2td} \int_\M (P_t f)^p d\mu .
\end{align*}
Observing $ \int_\M (P_t f)^p d\mu = \| P_t f\|_p^p \leq \|f\|_p^p$, we get
\[
  \int_\M (P_t f)^{p-2} \Gamma( P_t f ) d\mu \leq \frac{1}{1+(p-1)\frac{D}{d} } \left( \frac{D^2}{2td} \right) \|f \|_p^p.
\]

On the other hand, let us pick $\alpha=\frac{p}{2} , \beta= \frac{2-p}{2}$. Since $1\leq p<2$, one can easily check that
\begin{align*}
\left( \int_\M (P_t f)^{p-2} \Gamma( P_t f ) d\mu \right)^\alpha & = \left\| (P_t f)^{\frac{p(p-2)}{2}} \Gamma( P_t f )^\frac{p}{2} \right\|_\frac{2}{p} ,\\
 \left( \int_\M (P_t f)^p d\mu \right)^\beta & = \left\| (P_t f)^\frac{p(2-p)}{2} \right\|_\frac{2}{2-p} .
\end{align*}
So, by H\"{o}lder's inequality, we obtain
\begin{align*}
 \int_\M \Gamma(P_t f)^\frac{p}{2} d\mu \leq \left( \int_\M (P_t f)^{p-2} \Gamma( P_t f ) d\mu \right)^\alpha \left( \int_\M (P_t f)^p d\mu \right)^\beta ,
\end{align*}
or equivalently,
\begin{align*}
 \int_\M (P_t f)^{p-2} \Gamma( P_t f ) d\mu &\geq \left[ \int_\M \Gamma(P_t f)^\frac{p}{2} d\mu \left( \int_\M (P_t f)^p d\mu \right)^{-\beta} \right]^\frac{1}{\alpha} \\
 & = \left[ \| \sqrt{\Gamma(P_t f)} \|_p^p \ \| P_t f \|_p^{-p\beta} \right]^\frac{1}{\alpha} \\
 & = \left[ \| \sqrt{\Gamma(P_t f)} \|_p^p \ \| P_t f \|_p^{-p(2-p)/2} \right]^\frac{2}{p} \\
 & = \| \sqrt{\Gamma(P_t f)} \|_p^2 \ \| P_t f \|_p^{-(2-p)}
\end{align*}
Therefore, for $1\leq p <2$, we obtain
\begin{align*}
 \| \sqrt{\Gamma(P_t f)} \|_p^2 & \leq \left[ \frac{1}{1+(p-1)\frac{D}{d} } \left( \frac{D^2}{2td} \right) \|f \|_p^p \right] \| P_t f \|_p^{2-p}\\
 & \leq \frac{1}{1+(p-1)\frac{D}{d} } \left( \frac{D^2}{2td} \right) \|f \|_p^p  \| f \|_p^{2-p} \\
 & = \frac{1}{1+(p-1)\frac{D}{d} } \left( \frac{D^2}{2td} \right) \|f \|_p^2 .
\end{align*}

For $f\in C_0^\infty (\M)$, let us decompose $f= f^+ - f^-$, where $f^+ = \max(f,0)$, $f^- = -\min(f,0)$.

Then for each of $f^+$ and $f^-$, the above gradient estimate holds.

We can then finish the proof by observing that $\|f \|_p = \|f^+ \|_p+\|f^- \|_p$ and
$\| \sqrt{\Gamma(P_t f)} \|_p \leq \| \sqrt{\Gamma(P_t f^+)}+\sqrt{\Gamma(P_t f^-)} \|_p \leq \| \sqrt{\Gamma(P_t f^+)} \|_p+\| \sqrt{\Gamma(P_t f^-)} \|_p$.\\

\bigskip

Now suppose that $2\leq p \leq +\infty$.

In \cite{BB}, the following reverse Poincar\'{e} inequality (Caccioppoli type inequality) is proved:
\[
 \Gamma(P_t f) + \rho_2 t \Gamma^Z (P_t f) \leq \frac{1+\frac{2\kappa}{\rho_2}}{2t}\left( P_t (f^2) - (P_t f)^2 \right) .
\]
For $2\leq p \leq +\infty$, one can write $\| P_t(f^2) \|_\frac{p}{2} \leq \| f^2 \|_\frac{p}{2} = \| f \|_p^2 $.

Therefore, we have
\begin{align*}
  \left\| \Gamma(P_t f) \right\|_\frac{p}{2} & \leq \frac{1+\frac{2\kappa}{\rho_2}}{2t}  \left\| P_t (f^2) \right\|_\frac{p}{2} \\
   &\leq \frac{1+\frac{2\kappa}{\rho_2}}{2t}  \| f \|_p^2 ,
\end{align*}
which implies
\[
 \left\| \sqrt{\Gamma(P_t f)} \right\|_p \leq \sqrt{\frac{1+\frac{2\kappa}{\rho_2}}{2t} } \| f \|_p .
\]
\end{proof}

\subsection{Pseudo-Poincar\'e inequalities} \label{sec:pseudo-poincare}

By duality, the previous gradient bounds lead to the following pseudo-Poincar\'e type inequalities:

\begin{proposition} \label{lem:pseudo-poincare}
Let  $f \in C_0^\infty (\mathbb{M})$.
\begin{itemize}
\item If $1 \le p < 2$, then for every $t  \ge 0$,
\begin{align} \label{pseudo-poincare}
 \left\| f- P_t f \right\|_{p} \leq \sqrt{  \left( 2+\frac{4\kappa}{\rho_2} \right) t  }   \| \sqrt{ \Gamma(f) } \| _{p}
\end{align}

\item If $2 \le p \le +\infty$,  then for every $t \ge 0$,
\begin{align} \label{pseudo-poincare2}
 \left\| f- P_t f \right\|_{p} \leq\frac{ \left( 1+\frac{3\kappa}{2\rho_2}\right) \sqrt{2d} }{ \sqrt{ 1+(p-1) \left(1+\frac{3\kappa}{2\rho_2}\right)}} \sqrt{t}  \| \sqrt{ \Gamma(f) } \| _{p}
\end{align}\end{itemize}
\end{proposition}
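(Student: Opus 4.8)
The plan is to derive the pseudo-Poincar\'e inequalities from the gradient bounds of Proposition \ref{GB1} by exploiting the fundamental identity
\[
f - P_t f = -\int_0^t L P_s f\, ds = -\int_0^t \frac{d}{ds} P_s f \, ds,
\]
combined with integration by parts and duality. First I would fix $f \in C_0^\infty(\M)$ and a test function $g$ in the appropriate dual space, and write
\[
\int_\M (f - P_t f) g\, d\mu = -\int_0^t \int_\M (L P_s f) g \, d\mu \, ds = -\int_0^t \int_\M \Gamma(P_s f, g)\, d\mu\, ds,
\]
using self-adjointness of $L$ and the definition of $\Gamma$ (the boundary terms vanish thanks to (H.1) and the fact that $f \in C_0^\infty$). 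Then by Cauchy--Schwarz pointwise, $|\Gamma(P_s f, g)| \le \sqrt{\Gamma(P_s f)}\sqrt{\Gamma(g)}$, and by H\"older's inequality in the space variable with exponents $p$ and its conjugate $p'$,
\[
\left| \int_\M (f - P_t f) g\, d\mu \right| \le \int_0^t \| \sqrt{\Gamma(P_s f)} \|_p \, \| \sqrt{\Gamma(g)} \|_{p'}\, ds.
\]

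Next I would insert the bound from Proposition \ref{GB1}. The subtle point is which gradient bound to apply: to estimate $\| f - P_t f\|_p$ by duality against $g \in L^{p'}$, we need a bound on $\| \sqrt{\Gamma(P_s f)} \|_p$ in terms of $\| f \|_p$, but the right-hand side above involves $\| \sqrt{\Gamma(g)}\|_{p'}$, not $\| g\|_{p'}$. So the cleaner route is to bound $\sqrt{\Gamma(P_s f)}$ differently: write $P_s f = P_{s/2}(P_{s/2} f)$ and apply the gradient estimate to the semigroup acting on $P_{s/2}f$, giving $\| \sqrt{\Gamma(P_s f)}\|_p \le C(s)^{1/2} \| P_{s/2} f\|_p \le C(s)^{1/2}\|f\|_p$ where $C(s) \asymp 1/s$; then one is left with $\int_\M (f - P_t f) g$ but we still have a $\sqrt{\Gamma(g)}$ factor. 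The correct fix is to instead move the gradient onto $f$: integrate by parts the other way,
\[
-\int_\M \Gamma(P_s f, g)\, d\mu = -\int_\M \Gamma(f, P_s g)\, d\mu,
\]
so that
\[
\left|\int_\M (f-P_tf) g\, d\mu\right| \le \int_0^t \|\sqrt{\Gamma(f)}\|_p \, \|\sqrt{\Gamma(P_s g)}\|_{p'}\, ds \le \|\sqrt{\Gamma(f)}\|_p \int_0^t \|\sqrt{\Gamma(P_s g)}\|_{p'}\, ds.
\]
Now apply Proposition \ref{GB1} to $g$ with exponent $p'$: when $1 \le p < 2$ we have $2 < p' \le \infty$, so the second bound in Proposition \ref{GB1} applies and gives $\|\sqrt{\Gamma(P_s g)}\|_{p'} \le \sqrt{(1 + 2\kappa/\rho_2)/(2s)}\,\|g\|_{p'}$; integrating $\int_0^t ds/\sqrt{2s} = \sqrt{2t}$ yields $\|f - P_t f\|_p \le \sqrt{(1+2\kappa/\rho_2)}\sqrt{2t}\, \|\sqrt{\Gamma(f)}\|_p = \sqrt{(2 + 4\kappa/\rho_2)t}\,\|\sqrt{\Gamma(f)}\|_p$, which is exactly \eqref{pseudo-poincare}. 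Symmetrically, when $2 \le p \le \infty$ we have $1 \le p' < 2$, the first bound in Proposition \ref{GB1} applies with exponent $p'$, and $\int_0^t ds/\sqrt{2s/d} $ produces the constant in \eqref{pseudo-poincare2} (noting the coefficient $(1 + \frac{3\kappa}{2\rho_2})/\sqrt{1 + (p'-1)(1+\frac{3\kappa}{2\rho_2})}$, and that the exponent appearing there should be written in terms of $p$ via $p' = p/(p-1)$ — a small bookkeeping check against the stated form).

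The main obstacle, and the step requiring the most care, is the rigorous justification of the integration-by-parts identities and the passage of the duality argument: one must check that $s \mapsto P_s g$ is differentiable and that the relevant integrals converge near $s = 0$ (the $1/\sqrt{s}$ singularity is integrable, so this is fine), that the interchange of $\int_0^t$ and $\int_\M$ is legitimate, and that no boundary terms appear when integrating $\int_\M (LP_sf)g\,d\mu = -\int_\M \Gamma(P_s f,g)\,d\mu$ — this is where (H.1) and (H.3) are used, exactly as in the earlier Bakry--\'Emery arguments of the paper. For $p = \infty$ (resp.\ $p = 1$ on the dual side) the duality pairing needs the usual care, but density of $C_0^\infty$ and the $L^p$-contractivity of $P_t$ handle it. Everything else is the routine computation of the two elementary integrals $\int_0^t s^{-1/2}\,ds$ and matching constants.
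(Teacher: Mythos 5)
Your proof is correct and follows exactly the paper's own strategy: write $f - P_t f = -\int_0^t \partial_s P_s f\,ds$, pair against a test function $g$ with $\|g\|_{p'}\le 1$, integrate by parts and commute the semigroup onto $g$ so that the gradient falls on $P_s g$, then apply Proposition~\ref{GB1} to $g$ with the dual exponent $p'$, and finally integrate $\int_0^t s^{-1/2}\,ds = 2\sqrt t$ to obtain the stated time-dependence. The bookkeeping issue you flagged is in fact a genuine discrepancy in the paper's stated constant: for $2\le p\le\infty$ the argument yields $C_{p'}$ with $p'-1 = \frac{1}{p-1}$, so the denominator in \eqref{pseudo-poincare2} should read $\sqrt{\,1 + \tfrac{1}{p-1}\bigl(1+\tfrac{3\kappa}{2\rho_2}\bigr)\,}$ rather than $\sqrt{\,1+(p-1)\bigl(1+\tfrac{3\kappa}{2\rho_2}\bigr)\,}$; since $p-1\ge \tfrac{1}{p-1}$ for $p\ge 2$, the printed form asserts a strictly smaller constant than the proof actually produces, and appears to be a typo.
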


\begin{proof}
Let  $p' = \frac{p}{p-1}$. For any $g \in C_0^\infty (\M)$ with $\| g \| _{p'} \leq 1 $, we have
\begin{align*}
\int_\M g(f - P_t f ) d\mu &=  \int_\M g \left( -\int_0^t \partial_s P_s f ds \right) d\mu \\
=& -\int_0^t \int_\M g L P_s f d\mu ds = -\int_0^t \int_\M g  P_s L f d\mu ds \\
=& -\int_0^t \int_\M P_s g L f d\mu ds = \int_0^t \int_\M \Gamma( P_s g , f ) d\mu ds  \\
\leq & \| \sqrt{ \Gamma(f) } \|_p \int_0^t \| \sqrt{ \Gamma( P_s g ) } \|_{p'} ds.
\end{align*}

By using   Proposition \ref{GB1}, we have
\[
\int_0^t \| \sqrt{ \Gamma( P_s g ) } \|_{p'} ds\le\int_0^t  \frac{C_{p'}}{\sqrt{s}} ds \| g \|_{p'}.
\]
We therefore obtain
\begin{align*}
\int_\M g(f - P_t f ) d\mu \le 2 C_{p'} \sqrt{t} \|   \sqrt{ \Gamma(f) } \|_p  \| g \|_{p'}.
\end{align*}
By duality we can now conclude that
\[
 \| f- P_t f \|_{p} \le 2 C_{p'} \sqrt{t} \|   \sqrt{ \Gamma(f) } \|_p.
 \]
\end{proof}

\subsection{Improved Sobolev embedding} \label{sec:improved_sobolev}

For $\alpha < 0$, we define the Besov norm $  \| \cdot \|_{B_{\infty,\infty}^{\alpha}} $ on $\mathbb{M}$ as follows :
\begin{align} \label{Besov_norm}
  \| f \|_{B_{\infty,\infty}^{\alpha}} = \sup_{t>0} t^{-\alpha /2} \| P_t f \|_\infty.
\end{align}

It is clear from this definition that  $ \| f \|_{B_{\infty,\infty}^{\alpha}} \leq 1 $ is equivalent to the fact that for every $u >0$,  $ |P_{t_u} f | \leq u$ where $t_u= u^{2/\alpha}$ . For $p \ge 1$, we define then the Sobolev space $W^{1,p}(\mathbb{M})$ as the closure of $C_0^\infty(\M)$ with respect to the norm $\| f \|_p +\|\sqrt{\Gamma(f)}\|_p$.

\begin{theorem}[Improved Sobolev embedding]\label{main}
For every $1 \leq p < q < \infty$ and every $ f \in W^{1,p}(\mathbb{M}) $, we have
\begin{align} \label{main_ineq}
 \| f \|_q \leq C \| \sqrt{ \Gamma(f) } \|_{p}^{\theta} \| f \|_{ B_{\infty ,\infty}^{\theta / (\theta-1)} }^{1-\theta}
\end{align}
 where $\theta = \frac{p}{q}$ and where $C>0$ is a constant that only depends on $p,q,\rho_2,\kappa,d$.
\end{theorem}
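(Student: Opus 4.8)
The plan is to use the classical Hedberg-type truncation argument adapted to the semigroup setting, exactly in the spirit of Ledoux's proof of the Gagliardo--Nirenberg inequality via the heat semigroup. The starting point is the pseudo-Poincar\'e inequality of Proposition~\ref{lem:pseudo-poincare}, which controls $\|f-P_tf\|_p$ by $\sqrt{t}\,\|\sqrt{\Gamma(f)}\|_p$, together with the trivial bound $\|P_tf\|_\infty \le t^{\theta/(2(\theta-1))}\,\|f\|_{B_{\infty,\infty}^{\theta/(\theta-1)}}$ coming directly from the definition \eqref{Besov_norm} of the Besov norm (note $\theta/(\theta-1)<0$ since $0<\theta<1$). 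Writing $f = (f-P_tf) + P_tf$ and using these two estimates gives, for each fixed $t>0$, a pointwise-in-$p$-norm decomposition of $f$ into a small-$L^p$ part and a small-$L^\infty$ part.

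First I would set $\alpha = \theta/(\theta-1)$ and, for a level $t>0$ to be optimized, estimate $\|f\|_q^q = \int |f|^q d\mu$ by interpolating. The cleanest route is: since $|f| \le |f-P_tf| + |P_tf|$ and $|P_tf|\le t^{\alpha/2}\N{f}_{B_{\infty,\infty}^\alpha} =: M(t)$ pointwise, one has
\[
\|f\|_q^q \le C_q\int \left( |f-P_tf|^q + M(t)^q \right) d\mu
\]
only on the set where $|f|$ is comparable to one of the two terms; more efficiently one uses the layer-cake / good-$\lambda$ version: for the distribution function, $\mu(|f|>2\lambda) \le \mu(|f-P_tf|>\lambda)$ as soon as $M(t)\le\lambda$, i.e. $t \le (\lambda/\N{f}_{B_{\infty,\infty}^\alpha})^{2/\alpha}$. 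Choosing $t = t(\lambda)$ equal to this threshold value and applying Markov's inequality together with the pseudo-Poincar\'e bound $\|f-P_{t(\lambda)}f\|_p \le C\sqrt{t(\lambda)}\,\|\sqrt{\Gamma(f)}\|_p$ yields
\[
\mu(|f|>2\lambda) \le \lambda^{-p}\,C^p\, t(\lambda)^{p/2}\,\|\sqrt{\Gamma(f)}\|_p^p .
\]
Since $t(\lambda)^{1/2}$ is a power of $\lambda$ (namely $\lambda^{1/\alpha}\N{f}_{B_{\infty,\infty}^\alpha}^{-1/\alpha}$), the right-hand side is $C^p\lambda^{-p+p/\alpha}\N{f}_{B_{\infty,\infty}^\alpha}^{-p/\alpha}\|\sqrt{\Gamma(f)}\|_p^p$, and one checks from $\theta=p/q$ and $\alpha=\theta/(\theta-1)$ that the exponent of $\lambda$ is exactly $-q$; that is, $f$ lies in weak-$L^q$ with the desired quantitative bound.

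To upgrade weak-$L^q$ to strong $L^q$ I would run the same argument on truncations of $f$: apply the weak-type estimate to $f_{a,b} = \min(b,\max(f,a)) - (\text{appropriate constant})$, or more simply use the standard fact that the pseudo-Poincar\'e inequality is stable under the truncation $f \mapsto (|f|-s)_+ \operatorname{sgn} f$ (since $\Gamma$ contracts under $1$-Lipschitz functions of $f$, $\Gamma((\,|f|-s)_+) \le \mathbf{1}_{|f|>s}\Gamma(f)$), and then sum the resulting dyadic estimates $\mu(|f|>2^{k+1}) \lesssim 2^{-kq}(\cdots)$ geometrically. This is the routine but slightly technical part; the genuinely essential inputs are Proposition~\ref{lem:pseudo-poincare} and the Besov bound, and the rest is the optimization in $t$ and the dyadic summation. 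The main obstacle I anticipate is not any single deep estimate but bookkeeping: making sure the truncation argument is legitimate for $f\in W^{1,p}(\M)$ (density of $C_0^\infty$, plus the chain-rule/contraction inequality for $\Gamma$ under Lipschitz maps, which requires knowing $\Gamma$ is a bona fide Dirichlet-form carr\'e du champ — available here), and tracking the constant's dependence on $p,q,\rho_2,\kappa,d$ through the two regimes $1\le p<2$ and $2\le p\le\infty$ of Proposition~\ref{GB1}. I would therefore expect the write-up to first record the contraction property of $\Gamma$ and the stability of \eqref{pseudo-poincare}--\eqref{pseudo-poincare2} under truncation, then carry out the level-set optimization, and finally sum.
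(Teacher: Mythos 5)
Your Step 1 matches the paper's exactly: fix $u>0$, put $t_u=u^{2(\theta-1)/\theta}$, split $f=(f-P_{t_u}f)+P_{t_u}f$, use the Besov bound $|P_{t_u}f|\le u$ and the pseudo-Poincar\'e estimate of Proposition~\ref{lem:pseudo-poincare} together with Chebyshev to get the weak-type bound. This part is correct and is the same computation the paper performs.

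The gap is in your ``routine but slightly technical'' Step~2, and it is not as routine as you indicate. There are three missing ingredients.

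First, the one-sided truncation $f\mapsto(|f|-s)_+\operatorname{sgn}f$ that you propose ``more simply'' does not work: its carr\'e du champ is supported on all of $\{|f|>s\}$, so when you sum the dyadic estimates $\mu(|f|>2^{k+1})\lesssim 2^{-kq}\int_{\{|f|>2^k\}}\Gamma(f)^{p/2}d\mu$ over $k$, a fixed point $x$ is counted roughly $\log_2|f(x)|$ times. The right-hand side then controls something like $\int \log|f|\,\Gamma(f)^{p/2}d\mu$, not $\|\sqrt{\Gamma(f)}\|_p^p$, and the summation does not close. You need the two-sided truncation $\tilde f_u$ of the paper (or your $f_{a,b}$ with $b=cu$, $a=u$, and its negative counterpart), whose $\Gamma$ lives in the bounded annulus $\{u\le|f|\le cu\}$, so the $u$-integral produces the finite factor $\ln c$.

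Second, and more importantly, the weak-type estimate cannot simply be ``applied to the truncation'': the Besov condition $|P_{t_u}f|\le u$ is an assumption on $P_t f$, and it does not transfer to $P_t\tilde f_u$. The pseudo-Poincar\'e inequality controls $\tilde f_u-P_{t_u}\tilde f_u$, but you still need to connect $P_{t_u}\tilde f_u$ to the Besov bound on $P_{t_u}f$. This forces the decomposition
\[
\{\,|\tilde f_u|\ge 4u\,\}\subset\{\,|\tilde f_u-P_{t_u}\tilde f_u|\ge u\,\}\cup\{\,P_{t_u}(|f-\tilde f_u|)\ge 2u\,\},
\]
and the second event produces, after integrating over $u$ and using the $L^1$-contractivity of $P_t$ together with $|f-\tilde f_u|\le u+|f|\mathbf 1_{\{|f|>cu\}}$, an extra term of the form $\frac{q}{q-1}c^{1-q}\|f\|_q^q$. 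This term must be absorbed into the left-hand side $\frac{1}{5^q}\|f\|_q^q$ by taking $c$ large, which presupposes $\|f\|_q<\infty$.

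Third, that a priori finiteness of $\|f\|_q$ is therefore a genuine issue and needs its own argument (the paper's Step~3): truncate the $u$-integral to $[\epsilon,1/\epsilon]$, note the truncated quantity is finite by the weak-type bound, run the same estimates, and pass to the limit $\epsilon\to 0$. None of this is in your sketch. So while the overall strategy (pseudo-Poincar\'e plus Besov bound plus truncation) is the right one, as written the proposal skips precisely the part of Ledoux's argument that makes the strong-type conclusion legitimate.
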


\begin{proof}
Techniques of the proof are mainly based on \cite{Led}; for the sake of completeness, we reproduce the main arguments and make sure they adapt to our sub-Riemannian framework. The proof proceeds in three steps.

\textbf{ Step 1.} We  first prove the weak-type inequality
\[
 \| f \|_{q,\infty} \leq C \| \sqrt{\Gamma(f)} \|_p^{\theta} \| f \|_{ B_{\infty ,\infty}^{\theta / (\theta-1)} }^{1-\theta}.
\]

Without loss of generality,  we can assume $\| f \|_{ B_{\infty ,\infty}^{\theta / (\theta-1)} } \leq 1 $, which is equivalent to the condition:
\begin{align} \label{condition}
  |P_{t_u} f | \leq u \textmd{   , }t_u= u^{2(\theta-1)/\theta} \textmd{ for every } u >0 .
\end{align}
We have then
\[
 u^q \mu \{ |f| > 2u \} ~~\leq ~~ u^q \mu \{ |f-P_{t_u} f| > u \} ~~\leq~~ u^{q-p} \int_M |f-P_{t_u} f |^p d\mu
\]
From Proposition \ref{lem:pseudo-poincare}, we have
\[
 \| f- P_t f \|_{p} \leq C \sqrt{t} \| \sqrt{ \Gamma(f) } \| _{p}.
\]
Since $q-p+\frac{p}{2} \frac{2(\theta -1 )}{\theta} = 0$, we conclude
\begin{align*}
 u^q \mu \{ |f| > 2u \} \leq & u^{q-p} \left(C^p t_u^{p/2} \| \sqrt{ \Gamma(f) } \|_p^p  \right) \\
 \leq & C^p  \| \sqrt{ \Gamma(f) } \|_p^p
\end{align*}
We finally observe that $\sup_{u>0} u^q \mu \{ |f| > 2u \} = \frac{1}{2^q} \| f \|_{q,\infty}^q$, to conclude Step 1.

\textbf{ Step 2.} In the previous weak type inequality,  we would like to replace the $L^{q,\infty}$-norm by the $L^q$-norm. Again, we assume $\| f \|_{ B_{\infty ,\infty}^{\theta / (\theta-1)} } \leq 1$, that is
$|P_{t_u} f | \leq u $ for $t_u= u^{2(\theta-1)/\theta} $, $\forall u >0 $.
For $f \in W^{1,p}(\M)  \cap L^q(\M) $ such that $|P_{t_u} f | \leq u $, $\forall u >0$, we want to show that for some constant $C>0$,
\[
 \int_M | f |^q d\mu \leq C \int_\M  \Gamma(f)^{p/2} d\mu.
\]
Let $c\geq 5$ be an arbitrary constant. For any $u>0$, we introduce the truncation
\[
\tilde{f_u} = (f-u)^+ \wedge ((c-1)u) + (f+u)^- \vee ( -(c-1)u).
\]
That is, $\tilde{f_u}(x) = f(x)-u$ when $u\leq f(x) \leq cu$, and $\tilde{f_u}(x)= f(x)+u$
when $-cu\leq f(x) \leq -u$, otherwise $| \tilde{f_u} |$ is truncated as constants $0$ or $(c-1)u$.
Observing $$\{ |f| \geq 5u \} \subset \{ |\tilde{f_u}| \geq 4u \}, $$ yields
\begin{align*}
\int_0^\infty \mu(\{ |f| \geq 5u \} ) & d(u^q) \leq \int_0^\infty \mu(\{ |\tilde{f_u}| \geq 4u \} ) d(u^q) \\
\leq & \int_0^\infty \mu(\{ | \tilde{f_u} -P_{t_u}f  | \geq 3u  \}) d(u^q) \quad (\textmd{ since } | P_{t_u}(f) | \leq u )\\
\leq & \int_0^\infty \mu(\{ | \tilde{f_u} -P_{t_u}\tilde{f_u} |\geq u \}) d(u^q)
 +\int_0^\infty \mu(\{ P_{t_u}(|f-\tilde{f_u} |) \geq 2u \}) d(u^q) .
\end{align*}

We  now apply the pseudo-Poincar\'{e} inequality for $\tilde{f_u}$ as follows,
\begin{align*}
\mu(\{ | \tilde{f_u} -P_{t_u}\tilde{f_u} |\geq u \}) \leq &  u^{-p} \int_\bM  | \tilde{f_u} -P_{t_u}\tilde{f_u} |^p d\mu \\
\leq &  C' u^{-p} t^{p/2}_u \int_\bM  \Gamma(\tilde{f_u})^{p/2} d\mu \\
=& C' u^{-q} \int_{\{u\leq |f| \leq c u \}} \Gamma(f) ^{p/2} d\mu .
\end{align*}

So by integration we get,
\begin{align*}
\int_0^\infty \mu(\{ | \tilde{f_u} -P_{t_u}\tilde{f_u} |\geq u \}) d(u^q) \leq  &
 \int_0^\infty C' q u^{-1}  \int_{\{u\leq |f| \leq c u \}} \Gamma(f)^{p/2} d\mu d u \\
 \leq & C' q \int_\M \Gamma(f)^{p/2}  ~\int_{|f|/c}^{|f|} \frac{d u}{u} ~ d\mu \\
 =& C' q \ln c \int_\M  \Gamma(f)^{p/2} d\mu .
\end{align*}
On the other hand, we have
\begin{align*}
|f-\tilde{f_u}| =& |f-\tilde{f_u}| ~ 1_{\{|f|\leq cu\}} + |f-\tilde{f_u}|~ 1_{\{|f|> cu\}} \\
= \min & (u,|f|)~ 1_{\{|f|\leq cu\}} + (|f| - (c-1)u) ~ 1_{\{|f|> cu\}}  \leq u + |f|~ 1_{\{|f|> cu\}} .
\end{align*}
By integrating, we obtain then
\begin{align*}
\int_0^\infty \mu(\{ P_{t_u}(|f-\tilde{f_u} |) & \geq 2u \}) d(u^q) \leq \int_0^\infty \mu(\{ P_{t_u}(|f|~ 1_{\{|f|> cu\}}) \geq u \}) d(u^q) \\
\leq & \int_0^\infty \frac{1}{u} \left( \int_\M (|f|~ 1_{\{|f|> cu\}}) d\mu \right) d(u^q)
\quad ( P_t \textmd{ is a contraction on }L^1(\mathbb{M}) ) \\
=& \frac{q}{q-1} \int_\M |f| \left( \int_0^\infty 1_{\{|f|> cu\}} d(u^{q-1}) \right)  d\mu \\
=& \frac{q}{q-1} ~~ \frac{1}{c^{q-1}} \| f \|_q^q .
\end{align*}
Gathering all the estimates, we can then conclude
\begin{align*}
 \frac{1}{5^q} \int_\M | f |^q d\mu  =& \frac{1}{5^q} \|f\|_q^q = \int_0^\infty \mu(\{ |f| \geq 5u \} )  d(u^q) \\
\leq& C' q \ln c \int_\M \Gamma(f)^{p/2} d\mu + \frac{q}{q-1} ~~ \frac{1}{c^{q-1}} \| f \|_q^q
\end{align*}
If we pick a large $c \geq 5$ depending on $q$ such that $\frac{1}{5^q} > \frac{q}{q-1} ~~ \frac{1}{c^{q-1}} $,  we have proved the claim $$ \| f \|_q^q  \leq C^q \| \sqrt{ \Gamma(f) } \|^p $$ with $C = \left( \frac{C' q \ln c}{\frac{1}{5^q} -\frac{q}{(q-1)c^{q-1}} } \right)^{1/q}$.

\textbf{ Step 3.} Finally, it remains to prove $\|f\|_q < \infty$ is actually a consequence of  $ \|\sqrt{\Gamma(f)}\|_p<\infty $,
$\| f \|_{ B_{\infty ,\infty}^{\theta / (\theta-1)} } \leq 1 $, so that we can remove the  condition $f \in L^q(\mathbb{M}) $ from Step 2 and
complete the proof of  theorem. From the weak type inequality of Step 1, we have $\| f\|_{q,\infty} <\infty $. For any $0<\epsilon<1 $, we define
\[
N_\epsilon (f) = \int_{\epsilon}^{1/\epsilon} \mu( \{ |f| \geq 5u \} ) d(u^q)
\leq  \frac{2q}{5^q} \left(\ln\frac{1}{\epsilon} \right) \| f\|_{q,\infty}^q < \infty.
\]
Following the argument in Step 2 again, we see that
\[
N_\epsilon (f) \leq C' q \ln c \int_\M \Gamma(f)^{p/2} d\mu +
\int_{\epsilon}^{1/\epsilon} \frac{1}{u} \left( \int_\M (|f|~ 1_{\{|f|> cu\}}) d\mu \right) d(u^q) .
\]
The first term is bounded, and the second term can be  estimated as follows.
\begin{align*}
 & \int_{\epsilon}^{1/\epsilon} \frac{1}{u}  \left(  \int_\M (|f|~ 1_{\{|f|> cu\}}) d\mu  \right) d(u^q) \\
 = & \int_{\epsilon}^{1/\epsilon} \frac{1}{u}  \left(  c u \mu( {\{ |f|>cu\}} ) + c\int_u^\infty \mu( {\{|f|> cv \}} ) dv  \right) d(u^q) \\
\leq & (c+ \frac{c}{q-1}) \int_{\epsilon}^{1/\epsilon} \mu(\{|f| \geq cu \}) d(u^q) +
 \frac{cq}{(q-1)\epsilon^{q-1}} \int_{1/\epsilon}^{\infty} \mu(\{|f| \geq cu \}) du \\
\leq & \frac{q}{q-1}\frac{5^q}{c^{q-1}} N_\epsilon (f) + \frac{cq}{q-1} \int_{5/c\epsilon}^{1/\epsilon} \frac{ \| f\|_{q,\infty}^q }{(cu)^q}  d(u^q) +
 \frac{cq}{(q-1)\epsilon^{q-1}} \int_{1/\epsilon}^{\infty}  \frac{ \| f\|_{q,\infty}^q }{(cu)^q} du \\
 = & \frac{q}{q-1}\frac{5^q}{c^{q-1}} N_\epsilon (f)   + \frac{q}{q-1} \frac{1}{c^{q-1}}
 \| f\|_{q,\infty}^q \left( q \ln\frac{c}{5}+ \frac{1}{q-1} \right)
\end{align*}
So, by  choosing $c$ large enough, we have $\sup_{0<\epsilon<1} N_\epsilon (f) <\infty $ which implies
$\|f\|_q  = \lim_{\epsilon \rightarrow 0 } 5 (N_\epsilon (f) )^{1/q}    < \infty$.  This completes the proof.

\end{proof}

\subsection{Sobolev inequality,  Isoperimetry and volume growth}

In this section, we study the Sobolev and isoperimetric inequalities and their connections with the volume growth of metric balls. We obtain the sub-Riemannian analogue of a theorem essentially  due to Ledoux \cite{ledoux-stflour}.

 We first  remind what we mean by the perimeter of a  set  in our subelliptic setting. For further details, we refer to \cite{GN2}.

Let us first observe that, given any point $x\in \M$ there exists an open set $x\in U\subset \M$ in which the operator $L$ can be written as
\begin{equation}\label{locrep}
L = - \sum_{i=1}^m X^*_i X_i,
\end{equation}
where  the vector fields $X_i$ have Lipschitz continuous coefficients in $U$, and $X_i^*$ indicates the formal adjoint of $X_i$ in $L^2(\M,d\mu)$.

We indicate with $\mathcal F(\bM )$ the set of $C^1$ vector fields which are subunit for $L$. Given a function $f\in
L^1_{loc}(\bM)$, which is supported in $U$ we define the horizontal total variation of $f$ as
\[
\text{Var}(f) = \underset{\phi\in \F(\bM)}{\sup} \int_U
f \left(\sum_{i=1}^m X^*_i \phi_i\right) d\mu,
\]
where on $U$, $\phi=\sum_{i=1}^m \phi_i X_i$. For functions not supported in $U$, $\text{Var}(f) $ may be defined by using a partition of unity.
The space \[ BV (\bM) = \{f\in L^1(\bM)\mid
\text{Var}(f)<\infty\},
\]
endowed with the norm
\[
||f||_{BV(\bM)} = ||f||_{L^1(\bM)} + \text{Var}(f),
\]
is a Banach space. It is well-known that $W^{1,1}(\bM) = \{f\in
L^1(\bM)\mid \sqrt{\Gamma f}\in L^1(\bM)\}$ is a strict subspace of
$BV(\bM)$ and when $f\in
W^{1,1}(\bM)$ one has in fact
\[
\text{Var}(f) = ||\sqrt{\Gamma(f)}||_{L^1(\bM)}.
\]
Given a measurable set $E\subset \bM$ we say that it has finite perimeter, or is a Cacciopoli set if $\mathbf 1_E\in BV(\bM)$. In
such case the perimeter of $E$ is by
definition
\[
P(E) = \text{Var}(\mathbf 1_E).
\]
In a later section, we will need the following approximation result, see Theorem 1.14 in \cite{GN2}.

\begin{lemma}\label{P:ag}
Let $f\in BV(\bM)$, then there exists a sequence $\{f_n\}_{n\in
\mathbb N}$ of functions in $C_0^\infty(\bM)$ such that:
\begin{itemize}
\item[(i)] $||f_n - f||_{L^1(\bM)} \to 0$;
\item[(ii)] $\int_\bM \sqrt{\Gamma(f_n)} d\mu \to
\text{Var}(f)$.
\end{itemize}
\end{lemma}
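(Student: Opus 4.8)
The statement to prove is Lemma~\ref{P:ag}: every $f\in BV(\bM)$ can be approximated in $L^1$ by smooth compactly supported functions whose horizontal total variation converges to $\text{Var}(f)$. This is attributed to Theorem~1.14 in \cite{GN2}, so the plan is really to sketch the standard mollification-plus-truncation argument adapted to our setting.

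\medskip

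\textbf{Sketch of proof.} The plan is to combine a local smoothing operation with the cutoff functions $h_k$ furnished by (H.1). First I would treat the local question: given $f\in BV(\bM)$ with support in a coordinate chart $U$ where $L=-\sum_{i=1}^m X_i^* X_i$ as in \eqref{locrep}, one mollifies $f$ against a standard Euclidean bump $\varphi_\eta$ to obtain $f_\eta=f*\varphi_\eta\in C_0^\infty$. Lower semicontinuity of $\text{Var}$ with respect to $L^1$ convergence gives $\text{Var}(f)\le\liminf_{\eta\to0}\text{Var}(f_\eta)$; the matching upper bound $\limsup_{\eta\to0}\text{Var}(f_\eta)\le\text{Var}(f)$ comes from the definition of $\text{Var}$ via duality against subunit vector fields $\phi\in\F(\bM)$: for each fixed $\phi$ one moves the mollification onto $\phi$ using that the $X_i$ have Lipschitz coefficients, so that $\int_U f_\eta\,(\sum_i X_i^*\phi_i)\,d\mu = \int_U f\,(\sum_i X_i^*\phi_i^{(\eta)})\,d\mu + o(1)$ with $\phi^{(\eta)}$ still (essentially) subunit, hence bounded by $\text{Var}(f)+o(1)$ uniformly in $\phi$. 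Combined with $\|f_\eta-f\|_{L^1}\to0$, this settles the compactly supported case, and a partition of unity promotes it to general $f\in BV(\bM)$ with the caveat that one must control the error terms produced when the $X_i$ differentiate the partition functions — here one uses that only finitely many charts meet any compact set.

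\medskip

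The remaining point is to remove the compact support assumption on $f$ itself. For this I would multiply by the cutoffs $h_k\in C_0^\infty(\bM)$ from (H.1), which satisfy $h_k\nearrow1$ and $\|\Gamma(h_k)\|_\infty\to0$. One has $h_k f\to f$ in $L^1(\bM)$ by dominated convergence, and using the Leibniz rule $\sqrt{\Gamma(h_k f)}\le |h_k|\sqrt{\Gamma(f)}+|f|\sqrt{\Gamma(h_k)}$ at the level of the variation (again via the duality definition, since $h_k f$ need not be in $W^{1,1}$), one gets $\text{Var}(h_k f)\le \text{Var}(f)+\|f\|_{L^1}\|\sqrt{\Gamma(h_k)}\|_\infty \to \text{Var}(f)$; the reverse inequality is lower semicontinuity. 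Thus $h_k f$ is a sequence of $BV$ functions with compact support converging to $f$ in the desired sense, and a diagonal argument combining this with the mollification step above produces the required sequence $f_n\in C_0^\infty(\bM)$.

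\medskip

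\textbf{Main obstacle.} The delicate step is the upper bound $\limsup\text{Var}(f_\eta)\le\text{Var}(f)$ for the mollified functions: because $\text{Var}$ is defined by testing against subunit (not arbitrary) vector fields, one cannot simply integrate by parts freely, and one must check that transferring the mollifier from $f$ to the test field $\phi$ — which involves commuting $\varphi_\eta*(\cdot)$ past the non-constant-coefficient operators $X_i^*$ — degrades the subunit condition only by a factor $1+o(1)$ that is uniform over all admissible $\phi$. This uniformity is exactly what the Lipschitz regularity of the coefficients of $X_i$ in \eqref{locrep} buys us, and it is the technical heart of Theorem~1.14 in \cite{GN2}; everything else is bookkeeping with partitions of unity and the cutoffs $h_k$.
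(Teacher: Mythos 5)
The paper itself gives no proof of this lemma: it simply cites Theorem~1.14 of \cite{GN2} (Garofalo--Nhieu). Your sketch is therefore a reconstruction of the cited argument rather than a comparison with anything in the present paper, and it correctly identifies the main ingredients of that argument: local mollification in a chart where $L=-\sum X_i^*X_i$ with Lipschitz coefficient vector fields, lower semicontinuity of $\text{Var}$ for the easy inequality, a Friedrichs-type commutator estimate for the matching upper bound (the place where the Lipschitz regularity of the $X_i$ is genuinely used, as you emphasize), and the cutoffs $h_k$ from (H.1) combined with the Leibniz bound $\text{Var}(h_kf)\le\text{Var}(f)+\|f\|_{L^1}\|\sqrt{\Gamma(h_k)}\|_\infty$ to reduce to the compactly supported case. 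Those steps are all sound.

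The one place where the sketch is not merely informal but actually misstates the mechanism is the partition-of-unity step. You write that one controls the error terms ``$f\,X\zeta_j$'' produced when the fields differentiate the partition functions by using that only finitely many charts meet a compact set. Local finiteness guarantees the sums are well defined, but it does not make these errors small: $\sum_j\int|f|\,|X\zeta_j|\,d\mu$ need not be small and in general is not controlled by $\text{Var}(f)$. What actually controls them is the Anzellotti--Giaquinta (Meyers--Serrin) cancellation $\sum_j X\zeta_j=0$: one mollifies each piece $\zeta_j f$ at a scale $\eta_j$ chosen so small that $\sum_j(f\,X\zeta_j)*\varphi_{\eta_j}$ is close in $L^1$ to $\sum_j f\,X\zeta_j=0$. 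Without invoking this cancellation the upper bound $\limsup\text{Var}(f_n)\le\text{Var}(f)$ does not follow, so you should replace the ``only finitely many charts'' remark by the variable-scale mollification argument that exploits $\sum_j\zeta_j\equiv1$. With that correction the sketch matches the structure of the Garofalo--Nhieu proof.
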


We now prove the main result of this subsection.

\begin{theorem}\label{P:char}
Let $D>1$. Let us assume that $\bM$ is not compact in the metric
topology, then the following assertions  are equivalent:
\begin{itemize}
\item[(1)] There exists a constant $C_1 >0$ such that for every $x \in \bM$, $r \ge 0$,
\[
\mu (B(x,r)) \ge C_1 r^D.
\]
\item[(2)] There exists a constant $C_2>0$ such that for $x \in \bM$, $t>0$,
\[
p(x,x,t) \le \frac{C_2}{t^{\frac{D}{2}}}.
\]
\item[(3)] For some $1\leq p,q,r < \infty$ with $ \frac{1}{q}=\frac{1}{p}-\frac{r}{qD} $, there exists a constant $C_3>0$ such that $\forall f \in C_0^\infty(\mathbb{M})$, we have
\[
\| f \|_q \leq C_3 \| \sqrt{\Gamma(f)} \|_p^{p/q} ~\|f\|_r^{1-p/q}.
\]

\item[(4)] There exists a constant $C_4 >0$ such that for every Caccioppoli set $E\subset \bM$ one has
\[
\mu(E)^{\frac{D-1}{D}} \le C_4 P(E).
\]
\end{itemize}
\end{theorem}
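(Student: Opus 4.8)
The plan is to prove the cyclic chain of implications $(1)\Rightarrow(2)\Rightarrow(3)\Rightarrow(4)\Rightarrow(1)$, which is the classical Varopoulos–Ledoux scheme adapted to the subelliptic setting.

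\textbf{Step $(1)\Rightarrow(2)$.} The on-diagonal heat kernel bound from a lower volume bound is a standard consequence of the ultracontractivity machinery. First I would note that the reverse volume–heat kernel relation in this direction is the easier one: one uses the semigroup property $p(x,x,2t)=\int_\M p(x,y,t)^2\,d\mu(y)$ together with the parabolic Harnack-type or Li–Yau-type control available here (Theorem \ref{T:ge}) to compare $p(x,y,t)$ with $p(x,x,t)$ on a ball $B(x,\sqrt t)$, so that $1=P_{2t}1(x)\ge \int_{B(x,\sqrt t)} p(x,y,t)\,d\mu(y) \gtrsim p(x,x,ct)\,\mu(B(x,\sqrt t)) \gtrsim p(x,x,ct)\, t^{D/2}$, giving (2). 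The Li–Yau estimate gives exactly the Harnack inequality needed to say $p(x,y,t)$ is comparable to $p(x,x,t)$ for $y\in B(x,\sqrt t)$.

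\textbf{Step $(2)\Rightarrow(3)$.} From the on-diagonal bound $\|P_t\|_{L^1\to L^\infty}\le C_2 t^{-D/2}$ one gets, by the Riesz–Thorin interpolation with the $L^\infty$ and $L^1$ contractivity, the family of ultracontractive bounds $\|P_t f\|_\infty \le C t^{-D/2r'} \|f\|_r$ for suitable $r$. Plugging $t=t_u$ into the Besov norm definition \eqref{Besov_norm} this says precisely that $\|f\|_{B_{\infty,\infty}^{-r/?}} \lesssim \|f\|_r$ with the correct negative exponent; matching exponents so that $\theta/(\theta-1)$ equals the relevant power and $\theta=p/q$, one then feeds this into the improved Sobolev embedding of Theorem \ref{main}, $\|f\|_q \le C \|\sqrt{\Gamma(f)}\|_p^\theta \|f\|_{B_{\infty,\infty}^{\theta/(\theta-1)}}^{1-\theta}$, and reads off $\|f\|_q \le C_3 \|\sqrt{\Gamma(f)}\|_p^{p/q}\|f\|_r^{1-p/q}$ under the dimensional balance $\frac1q=\frac1p-\frac{r}{qD}$. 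This is the step where Theorem \ref{main} is doing the real work.

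\textbf{Step $(3)\Rightarrow(4)$.} Take $p=r=1$, so the exponent relation gives $q=\frac{D}{D-1}$, and the inequality in (3) becomes $\|f\|_{D/(D-1)} \le C_3 \|\sqrt{\Gamma(f)}\|_1$ for $f\in C_0^\infty(\M)$. Given a Caccioppoli set $E$, approximate $\mathbf 1_E$ by $f_n\in C_0^\infty(\M)$ as in Lemma \ref{P:ag}, so that $\|f_n\|_{D/(D-1)}\to \mu(E)^{(D-1)/D}$ (up to extracting a subsequence converging a.e., using $\|f_n-\mathbf 1_E\|_{L^1}\to 0$ and Fatou/dominated convergence on the truncations) and $\int_\M\sqrt{\Gamma(f_n)}\,d\mu\to \mathrm{Var}(\mathbf 1_E)=P(E)$; passing to the limit gives $\mu(E)^{(D-1)/D}\le C_3\, P(E)$, i.e. (4) with $C_4=C_3$. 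The only care needed is the lower semicontinuity / convergence of the $L^q$ norm of the approximants, which follows from the $L^1$ convergence plus a uniform bound.

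\textbf{Step $(4)\Rightarrow(1)$.} This is the step I expect to be the main obstacle, because it requires producing a genuine lower volume bound from an isoperimetric inequality, which is a differential-inequality argument on the volume of metric balls. Fix $x\in\M$ and set $V(r)=\mu(B(x,r))$. For a.e. $r$, the ball $B(x,r)$ is a Caccioppoli set with $P(B(x,r))\le V'(r)$ (the perimeter is controlled by the derivative of the volume, via the coarea formula for the distance function $d(x,\cdot)$, which is a subunit Lipschitz function so that $\sqrt{\Gamma(d(x,\cdot))}\le 1$ a.e.). Then (4) gives $V(r)^{(D-1)/D}\le C_4 V'(r)$, i.e. $\frac{d}{dr}V(r)^{1/D}\ge \frac{1}{D C_4}$, whence $V(r)^{1/D}\ge \frac{r}{DC_4}$ after integrating from $0$ (using $V(r)\to 0$ as $r\to 0$), giving $V(r)\ge C_1 r^D$ with $C_1=(DC_4)^{-D}$. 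The delicate points are justifying $P(B(x,r))\le V'(r)$ rigorously in the subelliptic BV framework (coarea formula for the Carnot–Carathéodory distance, finiteness of perimeter of balls for a.e. radius) and the use of completeness of $(\M,d)$ together with non-compactness to ensure balls of all radii are relevant and the integration is valid on all of $[0,\infty)$.
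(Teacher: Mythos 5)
Your cyclic chain $(1)\Rightarrow(2)\Rightarrow(3)\Rightarrow(4)\Rightarrow(1)$ is a legitimate alternative to the paper's scheme, which instead proves $(1)\Rightarrow(2)\Rightarrow(3)$, then $(3)\Leftrightarrow(4)$ by Fleming--Rishel/Maz'ya and coarea, and finally $(3)\Rightarrow(1)$ by a Saloff-Coste type dyadic iteration combined with the volume-doubling property of \cite{BBG}. Two concrete remarks.

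First, there is an arithmetic error in your Step $(3)\Rightarrow(4)$: with $p=r=1$ the balance $\frac1q=\frac1p-\frac{r}{qD}$ forces $q=\frac{D+1}{D}$, not $\frac{D}{D-1}$, and the right-hand side of $(3)$ retains the factor $\|f\|_1^{1-p/q}=\|f\|_1^{1/(D+1)}$ rather than disappearing. Plugging the BV approximants of $\mathbf 1_E$ from Lemma \ref{P:ag} you get $\mu(E)^{D/(D+1)}\le C_3\,P(E)^{D/(D+1)}\mu(E)^{1/(D+1)}$, and only after rearranging does this yield $\mu(E)^{(D-1)/D}\le C_3^{(D+1)/D}P(E)$. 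The conclusion is fine but the intermediate statement as you wrote it is not. More importantly, since $(3)$ is stated with ``for some $p,q,r$,'' you cannot in general force $p=r=1$; this step, as written, only shows $(3)_{p=r=1}\Rightarrow(4)$, so within your cycle you would need to note that $(2)\Rightarrow(3)$ gives $(3)$ for \emph{all} admissible triples, in particular for $p=r=1$. The paper sidesteps this by instead going directly from $(3)$ (any $p$) to $(1)$.

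Second, your Step $(4)\Rightarrow(1)$ is genuinely different from what the paper does. The paper's $(3)\Rightarrow(1)$ tests the Gagliardo--Nirenberg inequality against $\max\{s-d(x,\cdot),0\}$ and iterates, using the volume doubling estimate $\mu(B(x,s/2^i))\ge C^{-1}2^{-iQ}\mu(B(x,s))$ from \cite{BBG} to control the tail of the iteration; this keeps the argument entirely within the semigroup/curvature-dimension toolbox already developed. Your approach through the differential inequality $V(r)^{(D-1)/D}\le C_4 V'(r)$ is cleaner and does not need volume doubling, but it hinges on the inequality $P(B(x,r))\le V'(r)$ for a.e.~$r$, which in turn requires the eikonal property $\Gamma(d(x,\cdot))\le 1$ a.e.\ and the lower semicontinuity of the variation under $L^1$ convergence in the subelliptic BV framework of \cite{GN2}. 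You flag these as the delicate points; they are indeed the crux, and the paper avoids them (except in the Remark after the theorem, where the implication $(1)\Rightarrow(4)$ is sketched using the relative isoperimetric inequality of \cite{GN2}). If you want to follow your route you should explicitly invoke the duality representation \eqref{di} of $d$ to get $\Gamma(d(x,\cdot))\le1$, and the coarea formula in $BV(\bM)$ from \cite{GN2}, rather than leaving them as asides.

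Your sketch of $(1)\Rightarrow(2)$ is in the right spirit, but the paper simply quotes the Li--Yau Gaussian upper bound $p(x,x,t)\le C/\mu(B(x,\sqrt t))$ from \cite{Bau2}, which is both simpler and what the parabolic Harnack machinery you gesture at would ultimately produce. Step $(2)\Rightarrow(3)$ agrees with the paper.
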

\begin{remark}  if we replace the condition of (3) by for all $1\leq p,q,r < \infty$ with $ \frac{1}{q}=\frac{1}{p}-\frac{r}{qD} $,   (1),(2), (3) and (4) would  still  be equivalent.
\end{remark}
\begin{proof}
That (1) $\rightarrow$ (2) follows immediately from the Li-Yau Gaussian upper bound
\[
p(x,x,t) \le \frac{C}{\mu(B(x,\sqrt{t})}
\]
 that is proved in \cite{Bau2}.

The proof that (2) $\rightarrow$ (3)  follows from the improved Sobolev embedding Theorem \ref{main}.

Indeed, (2)  implies first  that for $x,y \in \M$,
\begin{align*}
p(x,y,t)& =\int_\M p(x,z,t/2) p( z,y,t/2) \mu(dy) \\
 &  \le \sqrt{\int_\M p(x,z,t/2)^2 \mu(dz)  } \sqrt{\int_\M p(y,z,t/2)^2 \mu(dz)  } \\
 & = \sqrt{p(x,x,t)p(y,y,t)} \\
 &  \le \frac{C_2}{t^{\frac{D}{2}}}.
\end{align*}

Therefore,  for every $f \in L^1(\mathbb{M})$, we have
\begin{align*}
 \| P_t(f)\|_\infty = \left\| \int_\M p(\cdot,y,t) f(y) \mu(dy) \right\|_\infty  \le \|p(\cdot,y,t)\|_\infty \|f\|_1 \leq \frac{C_2}{ t^{D/2} } \|f\|_1 .
\end{align*}
On the other hand, $P_t$ is a contraction on $L^\infty(\mathbb{M})$, i.e. $\|P_t\|_{\infty\rightarrow\infty} \leq 1$. Therefore, by the  Riesz-Thorin interpolation theorem, we deduce that we have  the following heat semigroup embedding
\begin{align*}
 \| P_t\|_{r\rightarrow\infty} \leq \frac{C_2^{1/r}}{t^{D/2r}} , ~~ r \geq 1 .
\end{align*}
Let now  $1\leq p,q,r < \infty$ such that$ \frac{1}{q}=\frac{1}{p}-\frac{r}{qD} $. Since  for $\theta=\frac{p}{q}$, $-\frac{\theta}{2(\theta-1)} - \frac{D}{2r} =0$, we have
\begin{align*}
\|f\|_{B_{\infty,\infty}^{\theta/(\theta-1)}} =& \sup_{t>0} t^{-\theta/2(\theta-1)} \|P_t f\|_\infty \\
\leq & \sup_{t>0} t^{-\theta/2(\theta-1)} \frac{C_2^{1/r}}{t^{D/2r}} \|f\|_r = C_2^{1/r} \|f\|_r ,
\end{align*}
we can conclude  (3) from the improved Sobolev embeddding of Theorem \ref{main}.

The proof that (3) is equivalent to (4) follows the classical ideas
of Fleming-Rishel and Maz'ya, and it is based on a generalization of
Federer's co-area formula for the space $BV(\M)$, see for instance
\cite{GN2}.

Finally, we show that $(3) \rightarrow (1)$. We adapt an idea in
\cite{SC} (see Theorem 3.1.5 on p. 58).
For any fix $x\in \mathbb{M}$, $s>0$, consider the function
\[
 f(y) = \max\{ s-d(x,y),0 \}.
\]
Then, it is easily seen that
\begin{align*}
 \| f \|_q \geq & (s/2) \mu(B(x,s/2) )^{1/q} \\
 \| f \|_r \leq & s \mu(B(x,s))^{1/r} \\
 \| \sqrt{\Gamma(f)} \|_p \leq &  \mu(B(x,s))^{1/p} .
\end{align*}
Hence, from (3) we have
\[
 \mu(B(x,s/2) )^{1/q} \leq 2 C_3 s^{-p/q} \mu(B(x,s))^{1/q+(1/r)(1-p/q)} =2 C_3 s^{-p/q} \mu(B(x,s))^{1/q+p/qD}.
\]
This can be written as follows.
\[
  \mu(B(x,s)) \geq  (2C_3)^{-Dq/(D+p)}  s^{Dp/(D+p)} \mu(B(x,s/2) )^{D/(D+p)}.
\]
\[
  \mu(B(x,s)) \geq  \{ (2C_3)^{-q}  s^{p} \}^a \mu(B(x,s/2) )^{a}
\]
where $a=D/(D+p) <1$.
Replacing $s$ by $s/2$ iteratively, we obtain
\[
  \mu(B(x,s)) \geq  (2C_3)^{-q (\sum_{j=1}^{i} a^j)}  s^{p (\sum_{j=1}^{i} a^j)} 2^{-p (\sum_{j=1}^{i} (j-1)a^j)} \mu(B(x,s/2^i) )^{a^i}.
\]

From the volume doubling property proved  in \cite{BBG}, we have the   control $$\mu(B(x,s/2^i) ) \geq C^{-1} (1/2^i)^Q \mu(B(x,s)), $$ for some $C = C(\rho_1,\rho_2,\kappa,d)>0$ and
$Q= \log_2 C$.

Therefore, we have
\[
 \liminf_{i\rightarrow \infty} \mu(B(x,s/2^i) )^{a^i} \geq  \lim_{i\rightarrow \infty} ( C^{-1}  \mu(B(x,s)) )^{a^i}  (1/2)^{iQ a^i } = 1.
\]
Since $\sum_{j=1}^\infty a^j = D/p$, $\sum_{j=1}^\infty (j-1)a^j = D^2/p^2$, we obtain the volume growth control
\[
  \mu(B(x,s)) \geq 2^{-(q+D)D/p} C_3^{-qD/p} s^{D}.
\]
This establishes (1), thus completing the proof.

\end{proof}

\begin{remark}
By combining the results of \cite{BBG} and   \cite{GN2},  an alternative proof of (1) $\rightarrow$ (4) could be given.  Indeed,
in \cite{GN2} it was proved that in a Carnot-Carath\'eodory space
$(X,\mu,d)$ the doubling condition \[ \mu(B(x,2r)) \le C_1
\mu(B(x,r)), \ \ \ x\in X, r>0, \] for the volume of the metric
balls combined with a weak Poincar\'e inequality suffice to
establish the following basic relative isoperimetric inequality
\begin{align}\label{iso}
\min\left\{\mu(E\cap B(x,r)),\mu((X\setminus E)\cap
B(x,r))\right\}^{\frac{D-1}{D}} \le C_{\text{iso}}
\left(\frac{r^D}{\mu(B(x,r))}\right)^{\frac{1}{D}} P(E,B(x,r)),
\end{align}
where $E\subset X$ is any set of
locally finite perimeter. In this inequality the number $D = \log_2
C_1$, where $C_1$ is the doubling constant, and $C_{\text{iso}}$ is
a constant which depends only on $C_1$ and on the constant in the
Poincar\'e inequality. If in addition the space $X$ satisfies the
 volume growth condition
\begin{equation}\label{maxvolgrowth}
\mu(B(x,r))\ge C_2 r^D,\ \ \ x\in \bM, r>0,
\end{equation}
then \eqref{iso} gives the global isoperimetric inequality
\begin{equation}\label{isoglobal}
\mu(E)^{\frac{D-1}{D}} \le C_{\text{iso}} P(E,\bM),
\end{equation}
for any measurable set of locally finite perimeter $E\subset \bM$. Since in \cite{BBG}, it was proved that the doubling condition and the weak Poincar\'e inequality are satisfied when $\rho_1\ge0$, we conclude that  (1) $\rightarrow$ (4).
\end{remark}

\section{The case $\rho_1 >0$}

Throughout this Section 3, we assume that $L$ satisfies the  \emph{generalized curvature-dimension inequality} \emph{CD}$(\rho_1,\rho_2,\kappa,d)$ with  $\rho_1>0$,  $\rho_2 >0$ and $\kappa \ge 0$.
The  following gradient bound was also proved in \cite{Bau2}.
\begin{theorem}[Li-Yau type gradient estimate with $\rho_1>0$]\label{T:ge2}
Let  $f \in C_0^\infty(\M)$, $f  \ge 0$, $f \not\equiv 0$, then the following inequality holds for $t>0$:
\begin{align}\label{bla}
\Gamma (\ln P_t f) \le
\frac{2\rho_2+3\kappa}{2\rho_2} e^{-\frac{2\rho_1\rho_2}{3(\rho_2+\kappa)} t} \frac{LP_t
f}{P_t f}+ \frac{d \rho_1}{12 \rho_2} \frac{(2\rho_2+
3\kappa)^2}{\rho_2+\kappa}  \frac{e^{-\frac{4\rho_1\rho_2}{3(\rho_2+\kappa)} t}}{
1-e^{-\frac{2\rho_1\rho_2}{3(\rho_2+\kappa)} t}}.
\end{align}
\end{theorem}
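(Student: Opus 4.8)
The plan is to establish the estimate \eqref{bla} by the heat-semigroup interpolation method of Bakry--\'Emery, in the $\Gamma$/$\Gamma^Z$ form adapted to the generalized curvature-dimension setting in \cite{Bau2}. Fix $t>0$ and $f\in C_0^\infty(\M)$ with $f\ge 0$, $f\not\equiv 0$. Since the semigroup is stochastically complete, $P_s(f+\ve)=P_sf+\ve$ for $\ve>0$, so it suffices to run the argument with $f$ replaced by $f+\ve$ (which makes $\log P_{t-s}(f+\ve)$ smooth and bounded below on $[0,t]$) and let $\ve\to 0$ at the end; the right-hand side of \eqref{bla} involves only $\frac{LP_tf}{P_tf}$, so this limit is harmless. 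Write $g=P_{t-s}f$ for $s\in[0,t]$ and, at a fixed base point of $\M$, set $\phi_1(s)=P_s\!\left(g\,\Gamma(\log g)\right)$ and $\phi_2(s)=P_s\!\left(g\,\Gamma^Z(\log g)\right)$. Using $\p_s g=-Lg$, the definitions \eqref{gamma2}--\eqref{gamma2Z}, and assumptions (H.1)--(H.3) to differentiate under $P_s$, one gets the identities $\phi_1'(s)=2P_s\!\left(g\,\Gamma_2(\log g)\right)$ and $\phi_2'(s)=2P_s\!\left(g\,\Gamma_2^Z(\log g)\right)$; for the second one, assumption (H.2) is precisely what makes the $\Gamma^Z$ computation close.

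Now pick a nonnegative smooth weight $a$ on $[0,t]$ with $a(t)=0$, set $b(s)=2\rho_2\int_s^t a(u)\,du$ (so that $b\ge 0$, $b(t)=0$ and $b'=-2\rho_2 a$), put $\nu(s)=b(s)/a(s)>0$, and consider $\phi=a\phi_1+b\phi_2$. Differentiating and inserting the inequality \emph{CD}$(\rho_1,\rho_2,\ka,d)$ pointwise with parameter $\nu(s)$---that is, multiplying it by $2a(s)g\ge 0$ and applying $P_s$---gives
\[
\phi'(s)\ \ge\ \left(a'+2\rho_1 a-\tfrac{2\ka a}{\nu}\right)\phi_1(s)+\left(b'+2\rho_2 a\right)\phi_2(s)+\tfrac{2a}{d}\,P_s\!\left(g\,(L\log g)^2\right),
\]
and the choice of $b$ cancels the $\phi_2$-coefficient identically. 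To handle the dimensional term, note that $P_s(g)=P_tf$ and $P_s\!\left(g\,L\log g\right)=LP_tf-\phi_1(s)$ (because $g\,L\log g=Lg-g\,\Gamma(\log g)$ and $P_sLg=LP_tf$); hence by Cauchy--Schwarz and the tangent-line bound $x^2/c\ge 2\lambda x-\lambda^2 c$, for any $\lambda(s)\in\R$,
\[
P_s\!\left(g\,(L\log g)^2\right)\ \ge\ \frac{\left(LP_tf-\phi_1(s)\right)^2}{P_tf}\ \ge\ 2\lambda(s)\left(LP_tf-\phi_1(s)\right)-\lambda(s)^2\,P_tf .
\]
Substituting, the lower bound for $\phi'(s)$ becomes affine in $\phi_1(s)$, with $\phi_1$-coefficient equal to $a'+2\rho_1 a-\tfrac{2\ka a}{\nu}-\tfrac{4\lambda a}{d}$.

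I would then choose $a$---which, through $b$, $\nu$ and the last relation, also fixes $\lambda$---so that this $\phi_1$-coefficient vanishes. Integrating $\phi'(s)$ over $[0,t]$, and using $\phi(t)=0$ (since $a(t)=b(t)=0$) together with $\phi(0)\ge a(0)\,P_tf\,\Gamma(\log P_tf)$ (since $b(0),\phi_2(0)\ge 0$), one arrives at
\[
\Gamma(\log P_tf)\ \le\ -\,\frac{4}{d\,a(0)}\!\left(\int_0^t a\lambda\right)\frac{LP_tf}{P_tf}\ +\ \frac{2}{d\,a(0)}\int_0^t a\lambda^2 .
\]
The relation defining $\lambda$ is a first-order linear ODE whose coefficients are built from $\rho_1,\rho_2,\ka,d$; solving it, and then optimizing the displayed right-hand side over the admissible weights $a$, is what produces the exponential factors $e^{-\frac{2\rho_1\rho_2}{3(\rho_2+\ka)}t}$ and $e^{-\frac{4\rho_1\rho_2}{3(\rho_2+\ka)}t}$, together with the numerical constants $\tfrac{2\rho_2+3\ka}{2\rho_2}$ and $\tfrac{d\rho_1}{12\rho_2}\cdot\tfrac{(2\rho_2+3\ka)^2}{\rho_2+\ka}$ in \eqref{bla}; a final passage $\ve\to 0$ completes the argument. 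The main obstacle is exactly this last optimization: one must identify the one-parameter family of weights for which the curvature-dimension inequality is saturated and all boundary contributions drop out simultaneously (in the case $\rho_1=0$ one recovers Theorem~\ref{T:ge}, where the optimal weights are polynomial rather than exponential). A secondary, purely technical difficulty is the rigorous justification of differentiating $\phi_1,\phi_2$ under $P_s$ and of the integrations by parts along the flow, which is where assumptions (H.1)--(H.3) are used.
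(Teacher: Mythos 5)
The paper does not prove this theorem; it is quoted verbatim from \cite{Bau2} (``The following gradient bound was also proved in \cite{Bau2}''), so there is no in-paper argument to compare against. That said, the machinery you set up is exactly the one used in \cite{Bau2}: the verification $\partial_s P_s\bigl(g\,\Gamma(\log g)\bigr)=2P_s\bigl(g\,\Gamma_2(\log g)\bigr)$ is correct (one checks $L\psi+\partial_s\psi=2g\Gamma_2(\log g)$ for $\psi=g\Gamma(\log g)=\Gamma(g)/g$), the analogous $\Gamma^Z$ identity does require (H.2) exactly as you say, the insertion of CD$(\rho_1,\rho_2,\kappa,d)$ at $\nu=b/a$, the cancellation via $b'=-2\rho_2 a$, and the Cauchy--Schwarz and tangent-line estimates for the $(L\log g)^2$ term all go through and produce the stated affine-in-$\phi_1$ lower bound for $\phi'$.

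However, your argument stops precisely where the theorem's content lies. After integrating in $s$ you have
\[
\Gamma(\log P_tf)\ \le\ -\frac{4}{d\,a(0)}\Bigl(\int_0^t a\lambda\Bigr)\frac{LP_tf}{P_tf}+\frac{2}{d\,a(0)}\int_0^t a\lambda^2,
\]
with $\lambda$ determined \emph{algebraically} from $a$ through $\lambda=\tfrac{d}{4}\bigl(a'/a+2\rho_1-2\kappa/\nu\bigr)$, $\nu=b/a$, $b=2\rho_2\int_s^t a$ (it is not, as you write, a first-order linear ODE for $\lambda$). The theorem asserts very specific exponential factors $e^{-\frac{2\rho_1\rho_2}{3(\rho_2+\kappa)}t}$, $e^{-\frac{4\rho_1\rho_2}{3(\rho_2+\kappa)}t}$ and constants $\tfrac{2\rho_2+3\kappa}{2\rho_2}$, $\tfrac{d\rho_1}{12\rho_2}\tfrac{(2\rho_2+3\kappa)^2}{\rho_2+\kappa}$; these only emerge after an explicit and nontrivial choice of the weight $a$, which you acknowledge but do not carry out. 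As it stands, you have shown that \emph{some} inequality of the form $\Gamma(\log P_tf)\le A(t)\frac{LP_tf}{P_tf}+B(t)$ follows from the scheme, but you have not exhibited the $a$ that yields the claimed $A(t),B(t)$, nor verified that the resulting integrals converge to those coefficients. That final step is the substance of the theorem and is missing; until the weight is produced (it is exponential in $s$, roughly $a(s)=e^{-c s}\bigl(e^{-c s}-e^{-ct}\bigr)^2$ with $c=\tfrac{2\rho_1\rho_2}{3(\rho_2+\kappa)}$ in \cite{Bau2}) and the resulting integrals are computed, the proposal is an outline, not a proof.
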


\subsection{Gradient bounds for the heat semigroup}

We first establish the following reverse Poincar\'e inequality.

\begin{proposition} \label{reverse_poincare2}
For $f \in C^\infty_0(\M)$ and $t \ge 0$,
\[
 \Gamma(P_t f)  \leq \frac{1}{2} \rho_1 \frac{ \rho_2+2\kappa}{ \rho_2+\kappa}  \frac{ e^{-2\frac{\rho_1\rho_2 }{\rho_2+\kappa}t}}{1- e^{-\frac{\rho_1\rho_2}{\rho_2+\kappa} t} } (P_t f^2 -(P_t f)^2).
\]
\end{proposition}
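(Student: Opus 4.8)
The plan is to run the Bakry--\'Emery semigroup interpolation, following the scheme of the reverse Poincar\'e inequality for $\rho_1=0$ recalled in the proof of Proposition~\ref{GB1} (see \cite{BB}), but now retaining the curvature term $\rho_1$. Fix $t>0$, $x\in\M$ and $f\in C_0^\infty(\M)$, and for $s\in[0,t]$ put $g_s:=P_{t-s}f$ and
\[
\phi_0(s)=P_s(g_s^2)(x),\qquad
\phi_1(s)=P_s(\Gamma(g_s))(x),\qquad
\phi_2(s)=P_s(\Gamma^Z(g_s))(x).
\]
By hypoellipticity and (H.3) these are smooth on $[0,t]$; one has $\phi_0(0)=(P_tf)^2(x)$, $\phi_0(t)=P_t(f^2)(x)$, $\phi_1(0)=\Gamma(P_tf)(x)$, $\phi_2(0)=\Gamma^Z(P_tf)(x)$, and differentiation gives $\phi_0'=2\phi_1$, $\phi_1'(s)=2P_s(\Gamma_2(g_s))(x)$, $\phi_2'(s)=2P_s(\Gamma^Z_2(g_s))(x)$. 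For $C^1$ functions $a,b:[0,t]\to[0,\infty)$ that are strictly positive on $[0,t)$ and vanish at $t$, set $G(s)=\phi_0(s)+a(s)\phi_1(s)+b(s)\phi_2(s)$.

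Applying CD$(\rho_1,\rho_2,\kappa,d)$ to $g_s$ with the parameter $\nu=b(s)/a(s)$ and then the positivity preserving operator $P_s$,
\[
a\phi_1'+b\phi_2'=2a\,P_s\!\Big(\Gamma_2(g_s)+\tfrac{b}{a}\Gamma^Z_2(g_s)\Big)(x)
\ \ge\ \tfrac{2a}{d}P_s\big((Lg_s)^2\big)(x)+2a\Big(\rho_1-\tfrac{\kappa a}{b}\Big)\phi_1+2a\rho_2\phi_2,
\]
whence, using $\phi_0'=2\phi_1$,
\[
G'(s)\ \ge\ \tfrac{2a}{d}P_s\big((Lg_s)^2\big)(x)
+\Big(2+a'+2\rho_1a-\tfrac{2\kappa a^2}{b}\Big)\phi_1(s)+\big(2\rho_2a+b'\big)\phi_2(s).
\]
Since $\phi_1,\phi_2\ge0$ it suffices to choose $a,b$ making the two bracketed quantities nonnegative on $[0,t)$; a natural such choice is the solution of the Riccati-type system $a'=-2-2\rho_1a+\tfrac{2\kappa a^2}{b}$, $b'=-2\rho_2a$ with terminal data $a(t)=b(t)=0$, which behaves like $a(s)\sim\tfrac{2\rho_2}{\rho_2+2\kappa}(t-s)$, $b(s)\sim\tfrac{2\rho_2^2}{\rho_2+2\kappa}(t-s)^2$ near $s=t$ and remains strictly positive on $[0,t)$ (so that $\nu=b/a>0$ is a legitimate choice in the curvature dimension inequality). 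With such $a,b$ one gets $G'\ge0$, so $G(0)\le G(t)$, i.e.
\[
(P_tf)^2(x)+a(0)\,\Gamma(P_tf)(x)+b(0)\,\Gamma^Z(P_tf)(x)\ \le\ P_t(f^2)(x);
\]
discarding the nonnegative term $b(0)\,\Gamma^Z(P_tf)(x)$ gives $\Gamma(P_tf)\le\frac1{a(0)}\big(P_t(f^2)-(P_tf)^2\big)$ pointwise on $\M$, and solving the ODE system and evaluating $a(0)$ yields the constant appearing in the statement. As a sanity check, letting $\rho_1\to0$ gives $a(s)\to\tfrac{2\rho_2}{\rho_2+2\kappa}(t-s)$, hence $\tfrac1{a(0)}\to\tfrac{1+2\kappa/\rho_2}{2t}$, the $\rho_1=0$ reverse Poincar\'e constant of \cite{BB} used in Proposition~\ref{GB1}.

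The one genuinely delicate step is the analysis of the ODE system for $(a,b)$: writing down its explicit solution, checking that $a$ and $b$ stay strictly positive on $[0,t)$, and computing $a(0)$ so as to recover the precise exponential constant $\tfrac12\rho_1\tfrac{\rho_2+2\kappa}{\rho_2+\kappa}\,e^{-2\frac{\rho_1\rho_2}{\rho_2+\kappa}t}\big(1-e^{-\frac{\rho_1\rho_2}{\rho_2+\kappa}t}\big)^{-1}$. Everything else — justifying the differentiations under $P_s$ by means of (H.1)--(H.3) and the hypoellipticity of $L$, and the elementary identities for $\phi_0',\phi_1',\phi_2'$ — is routine.
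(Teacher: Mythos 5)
Your overall strategy — Bakry--\'Emery interpolation along the flow $s\mapsto P_s(\,\cdot\,)$ applied to the function $g_s=P_{t-s}f$, with a time-dependent combination of $\Gamma$ and $\Gamma^Z$ and the CD$(\rho_1,\rho_2,\kappa,d)$ inequality with $\nu=b/a$ — is indeed the method used in the paper. But the decomposition is genuinely different, and this difference is not cosmetic.

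The paper works with the quantity $\phi(\cdot,s)=a(s)\Gamma(P_{T-s}f)+b(s)\Gamma^Z(P_{T-s}f)$ with \emph{no} $\phi_0$-term. It enforces $b'+2\rho_2 a=0$ exactly (via the explicit ansatz $b(s)=(e^{-Ks}-e^{-KT})^2$, $a=-b'/(2\rho_2)$ with $K=\frac{2\rho_1\rho_2}{\rho_2+\kappa}$), and then only requires that the remaining $\Gamma$-coefficient $a'+2\rho_1a-2\kappa a^2/b$ be bounded below by a $T$-dependent \emph{negative} constant $-C(T)$. The parabolic comparison theorem then gives $0=P_T\phi(\cdot,T)\ge \phi(\cdot,0)-C(T)\int_0^T P_s\Gamma(P_{T-s}f)\,ds$, and the explicit identity $\int_0^T P_s\Gamma(P_{T-s}f)\,ds=\tfrac12(P_Tf^2-(P_Tf)^2)$ converts the error into exactly the variance term. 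Thus, the constant in the statement is $C(T)/(2a(0))$, not $1/a(0)$. By contrast, you include $\phi_0=P_s(g_s^2)$ in $G$ and insist on exact monotonicity $G'\ge0$, which forces the extra inhomogeneous term $+2$ in the $a$-equation and leads to a Riccati system $a'=-2-2\rho_1a+\tfrac{2\kappa a^2}{b}$, $b'=-2\rho_2 a$ with singular terminal data $a(t)=b(t)=0$.

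There are two concrete gaps. First, you never actually solve this system; you only give the local asymptotics at $s=t$ and then assert that ``solving the ODE system and evaluating $a(0)$ yields the constant appearing in the statement.'' This is precisely the content of the proposition and cannot be left to the reader, especially since for $\kappa>0$ the system has no obvious closed form and the singularity at $s=t$ makes existence and global positivity nontrivial. Second — and this is the decisive point — your $a(0)$ does \emph{not} give the stated constant. At $\kappa=0$ the Riccati system decouples to $a'=-2-2\rho_1 a$, $a(t)=0$, whose solution is $a(s)=\rho_1^{-1}(e^{2\rho_1(t-s)}-1)$, so $1/a(0)=\frac{\rho_1}{e^{2\rho_1 t}-1}=\frac{\rho_1 e^{-2\rho_1 t}}{1-e^{-2\rho_1 t}}$, whereas the proposition at $\kappa=0$ asserts the constant $\frac{\rho_1}{2}\,\frac{e^{-2\rho_1 t}}{1-e^{-\rho_1 t}}$. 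These are different functions of $t$ (their ratio is $2/(1+e^{-\rho_1 t})$). Your $\rho_1\to0$ sanity check only confirms the common leading-order behavior $\tfrac{1+2\kappa/\rho_2}{2t}$ and does not detect this discrepancy. So even if the ODE analysis were completed, your method would prove a reverse Poincar\'e inequality with a \emph{different} constant, not the one in Proposition~\ref{reverse_poincare2}. To reproduce the paper's constant you would need to switch to the paper's ansatz and to its comparison-theorem argument that tolerates a negative lower bound on the $\Gamma$-coefficient.
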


\begin{proof}
Let us fix $T>0$ once time for all in the following proof. Given a function  $f \in C^\infty_0(\M)$, for $0\le t\le T$ we introduce the functionals

\[
\phi_1 (x,t)=\Gamma ( P_{T-t}f)(x),
\]
and
\[
\phi_2 (x,t)=  \Gamma^Z (P_{T-t}f)(x),
\]
which are defined on $\M\times [0,T]$.  A straightforward computation shows  that
\[
L\phi_1+\frac{\partial \phi_1}{\partial t} =2 \Gamma_2 (P_{T-t}f).
\]
and
\[
L\phi_2+\frac{\partial \phi_2}{\partial t} =2 \Gamma_2^Z (P_{T-t}f).
\]

Consider now the function
\begin{align*}
\phi (x,t)&= a(t) \phi_1 (x,t)+b(t) \phi_2(x,t) \\
 & =a(t)\Gamma ( P_{T-t}f)(x)+b(t) \Gamma^Z ( P_{T-t}f)(x),
\end{align*}
where $a$ and $b$ are two non negative functions  that will be chosen later.
Applying the generalized curvature-dimension inequality \emph{CD}$(\rho_1,\rho_2,\kappa,\infty)$, we obtain
\begin{align*}
  L\phi+\frac{\partial \phi}{\partial t} =&
a'  \Gamma (P_{T-t}f)+b'  \Gamma^Z ( P_{T-t}f)  \\
 &+2a \Gamma_2 ( P_{T-t}f)+2b\Gamma_2^Z (P_{T-t}f) \\
&\ge  \left(a'+2\rho_1 a -2\kappa \frac{a^2}{b}\right) \Gamma ( P_{T-t}f)  +(b'+2\rho_2 a) \Gamma^Z ( P_{T-t}f).
\end{align*}
Let us now chose
\[
b(t)=\left( e^{-\frac{2\rho_1 \rho_2 t}{\kappa+\rho_2}}- e^{-\frac{2\rho_1 \rho_2 T}{\kappa+\rho_2}}\right)^2
\]
and
\[
a(t)=-\frac{b'(t)}{2\rho_2},
\]
so that
\begin{align*}
b'+2\rho_2 a=0
\end{align*}
and
\begin{align*}
a'+2\rho_1 a -2\kappa \frac{a^2}{b}  \ge \rho_1 \frac{ \rho_2+2\kappa}{ \rho_2+\kappa}   e^{-2\frac{\rho_1\rho_2 }{\rho_2+\kappa}T} .
\end{align*}
With this choice, we get therefore
\[
 L\phi+\frac{\partial \phi}{\partial t}  \ge- \rho_1 \frac{ \rho_2+2\kappa}{ \rho_2+\kappa}   e^{-2\frac{\rho_1\rho_2 }{\rho_2+\kappa}T} \Gamma ( P_{T-t}f).
\]
and therefore from a comparison theorem for parabolic partial differential equations (see \cite{Bau2})  we have
\[
P_T(\phi(\cdot,T))(x) \ge \phi(x,0)-\rho_1 \frac{ \rho_2+2\kappa}{ \rho_2+\kappa}   e^{-2\frac{\rho_1\rho_2 }{\rho_2+\kappa}T}\int_0^T P_t (\Gamma ( P_{T-t}f))dt .
\]
It is easily seen that
\[
\int_0^T P_t (\Gamma ( P_{T-t}f))dt=\frac{1}{2} ( P_Tf^2-(P_T f)^2),
\]
and since,
\[
\phi(x,0)=a(0)\Gamma ( P_{T}f)(x)+b(0)\Gamma^Z ( P_{T}f)(x)
\]
and
\[
P_T(\phi(\cdot,T))(x) =a(T)P_{T}( \Gamma (f))(x)+b(T)P_{T}(  \Gamma^Z ( f))(x)=0,
\]
the proof is completed.
\end{proof}

\begin{proposition} \label{GB2}
Let  $f \in C_0^\infty(\bM)$.
\begin{itemize}
\item If $1 \le p < 2$, then for every $t >0$,
\[
\left\| \sqrt{\Gamma(P_t f)} \right\|_p  \leq
 \frac{ 1 }  {  \left(  1+(p-1)(1+\frac{3\kappa}{2\rho_2} )e^{-\frac{2\rho_1\rho_2}{3(\rho_2+\kappa)}t} \right)^{\frac{1}{2} }  }
 \left(   \frac{ d\rho_1\rho_2 } { 3(\rho_2+\kappa) }
          \frac{ ( 1+\frac{3\kappa}{2\rho_2} )^2 e^{-\frac{4\rho_1\rho_2}{3(\rho_2+\kappa)}t} }  { (1-e^{- \frac{2\rho_1\rho_2} {3(\rho_2+\kappa)} t} ) } \right)^\frac 1 2
  \|f\|_p
\]

\item If $2 \le p \le +\infty$,  then for every $t>0$,
\[
\left\| \sqrt{\Gamma(P_t f)} \right\|_p  \leq  \left( \frac{1}{2} \rho_1 \frac{ \rho_2+2\kappa}{ \rho_2+\kappa}  \frac{ e^{-2\frac{\rho_1\rho_2 }{\rho_2+\kappa}t}}{1- e^{-\frac{\rho_1\rho_2}{\rho_2+\kappa} t} } \right)^\frac{1}{2} \| f\|_p.
\]
\end{itemize}
\end{proposition}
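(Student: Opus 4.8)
The plan is to run, essentially verbatim, the argument that proved Proposition \ref{GB1}, simply replacing the two curvature inputs used there by their positive-curvature analogues: in the range $1\le p<2$ we use the Li--Yau estimate of Theorem \ref{T:ge2} in place of Theorem \ref{T:ge}, and in the range $2\le p\le\infty$ we use the reverse Poincar\'e inequality of Proposition \ref{reverse_poincare2} in place of the one borrowed from \cite{BB}. In both regimes it is enough to prove the bound for $f\ge 0$, $f\not\equiv 0$; the general case follows by writing $f=f^+-f^-$ with disjoint supports, applying the estimate to $f^+$ and $f^-$ separately, and using $\sqrt{\Gamma(P_tf)}\le\sqrt{\Gamma(P_tf^+)}+\sqrt{\Gamma(P_tf^-)}$ together with $\|f\|_p=\|f^+\|_p+\|f^-\|_p$, exactly as in the closing lines of the proof of Proposition \ref{GB1}.

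For $1\le p<2$, abbreviate $A(t)=\left(1+\frac{3\kappa}{2\rho_2}\right)e^{-\frac{2\rho_1\rho_2}{3(\rho_2+\kappa)}t}$ and let $B(t)$ be the second (curvature) term on the right of \eqref{bla}. A direct rearrangement, using $(2\rho_2+3\kappa)^2=4\rho_2^2\left(1+\frac{3\kappa}{2\rho_2}\right)^2$, rewrites it as
\[
B(t)=\frac{d\rho_1\rho_2}{3(\rho_2+\kappa)}\left(1+\frac{3\kappa}{2\rho_2}\right)^2\frac{e^{-\frac{4\rho_1\rho_2}{3(\rho_2+\kappa)}t}}{1-e^{-\frac{2\rho_1\rho_2}{3(\rho_2+\kappa)}t}},
\]
which is precisely the factor in the statement. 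Since $\Gamma(\ln P_tf)=(P_tf)^{-2}\Gamma(P_tf)$, multiplying Theorem \ref{T:ge2} by $(P_tf)^p$ and integrating gives
\[
\int_\M (P_tf)^{p-2}\Gamma(P_tf)\,d\mu\le A(t)\int_\M (P_tf)^{p-1}LP_tf\,d\mu+B(t)\int_\M (P_tf)^p\,d\mu.
\]
Integration by parts turns the first integral on the right into $-(p-1)\int_\M (P_tf)^{p-2}\Gamma(P_tf)\,d\mu$, while $\int_\M (P_tf)^p\,d\mu=\|P_tf\|_p^p\le\|f\|_p^p$ by contractivity of $P_t$ on $L^p$, so that
\[
\int_\M (P_tf)^{p-2}\Gamma(P_tf)\,d\mu\le\frac{B(t)}{1+(p-1)A(t)}\,\|f\|_p^p.
\]
Then, exactly as in Proposition \ref{GB1}, H\"older's inequality with exponents $\alpha=\frac p2$, $\beta=\frac{2-p}2$ yields $\int_\M (P_tf)^{p-2}\Gamma(P_tf)\,d\mu\ge\|\sqrt{\Gamma(P_tf)}\|_p^2\,\|P_tf\|_p^{-(2-p)}$, and combining this with $\|P_tf\|_p\le\|f\|_p$ produces the asserted estimate after taking square roots.

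For $2\le p\le\infty$ the argument is shorter: apply the $L^{p/2}$-norm to the reverse Poincar\'e inequality of Proposition \ref{reverse_poincare2}, discard the nonnegative term $(P_tf)^2$, and bound $\|P_t(f^2)\|_{p/2}\le\|f^2\|_{p/2}=\|f\|_p^2$ using contractivity of $P_t$ on $L^{p/2}$; taking square roots gives the claim. The only point requiring care is the bookkeeping of constants, i.e. checking that the outputs of Theorem \ref{T:ge2} and Proposition \ref{reverse_poincare2} reassemble into exactly the displayed expressions — in particular the algebraic identity recasting the curvature term of \eqref{bla}. There is no genuine analytic obstacle here; the whole proof is strictly parallel to the $\rho_1=0$ case, the only new ingredients being the two curvature estimates already at our disposal.
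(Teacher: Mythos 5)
Your proposal is correct and takes essentially the same route as the paper: the authors observe that the proof of Proposition~\ref{GB1} only used two abstract inputs of the form $\Gamma(\ln P_tf)\le\alpha(t)\frac{LP_tf}{P_tf}+\beta(t)$ and $\Gamma(P_tf)\le\gamma(t)(P_tf^2-(P_tf)^2)$, and then simply read off $\alpha,\beta,\gamma$ from Theorem~\ref{T:ge2} and Proposition~\ref{reverse_poincare2}, which is exactly what you do (the algebraic rewriting via $(2\rho_2+3\kappa)^2=4\rho_2^2(1+\tfrac{3\kappa}{2\rho_2})^2$ is the only bookkeeping needed and you handle it correctly).
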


\begin{proof}
The proof is essentially identical to the proof of Proposition \ref{GB1}. We observe from this proof that if for $f\in C^\infty_0 (\M),t>0 $
\begin{align*}
 &\Gamma( \ln P_t f) \leq \alpha(t) \frac{LP_t f}{P_t f} + \beta(t), \quad \quad f\geq 0,f\not\equiv 0, \alpha(t),\beta(t) >0\\
 &\Gamma( P_t f) \leq \gamma(t) ( P_t f^2 - (P_t f)^2) ,\quad \quad \gamma(t) >0
\end{align*}
then
\begin{align*}
 \| \sqrt{\Gamma(P_t f)} \|_p \leq  \left( \frac{\beta(t)}{ 1+ (p-1) \alpha(t) } \right)^\frac{1}{2} \| f\|_p,  \ \ \ &\text{for }1\leq p < 2   \\
 \| \sqrt{\Gamma(P_t f)} \|_p \leq  (\gamma(t))^\frac{1}{2} \| f\|_p,  \quad \quad \quad\quad \ &\text{for }2\leq p < \infty .
\end{align*}
By Theorem \ref{T:ge2} and Proposition \ref{reverse_poincare2}, we then see that  $ \alpha(t),\beta(t),\gamma(t)$ are  given by:
\begin{align*}
 \alpha(t) &=   (1+\frac{3\kappa}{2\rho_2} )e^{-\frac{2\rho_1\rho_2}{3(\rho_2+\kappa)}t}   , \quad
 \beta(t) =  \frac{ d\rho_1\rho_2 } { 3(\rho_2+\kappa) }
          \frac{ ( 1+\frac{3\kappa}{2\rho_2} )^2 e^{-\frac{4\rho_1\rho_2}{3(\rho_2+\kappa)}t} }  { (1-e^{- \frac{2\rho_1\rho_2} {3(\rho_2+\kappa)} t} ) }    \\
 &\gamma(t) = \frac{1}{2} \rho_1 \frac{ \rho_2+2\kappa}{ \rho_2+\kappa}  \frac{ e^{-2\frac{\rho_1\rho_2 }{\rho_2+\kappa}t}}{1- e^{-\frac{\rho_1\rho_2}{\rho_2+\kappa} t} } .
\end{align*}
\end{proof}

\subsection{Pseudo-Poincar\'e inequalities} \label{sec:pseudo-poincare2}

\begin{proposition} \label{lem:pseudo-poincare2}
Let  $f \in C_0^\infty (\mathbb{M})$.
\begin{itemize}
\item If $1 \le p < 2$, then for every $t  \ge 0$,
\begin{align} \label{pseudo-poincare3}
 \| f- P_t f \|_{p} \leq \left( \frac{2(\rho_2+2\kappa)(\rho_2+\kappa)}{\rho_1 \rho_2^2} ( 1- e^{-\frac{\rho_1\rho_2}{\rho_2+\kappa} t} ) \right)^\frac{1}{2}
   \| \sqrt{ \Gamma(f) } \| _{p}
\end{align}

\item If $2 \le p \le +\infty$,  then for every $t \ge 0$,
\begin{align} \label{pseudo-poincare4}
 \| f- P_t f \|_{p} \leq ( 1+\frac{3\kappa}{2\rho_2} ) \left( \frac{3d(\rho_2 + \kappa)}{\rho_1\rho_2}  (1-e^{- \frac{2\rho_1\rho_2} {3(\rho_2+\kappa)} t} ) \right)^\frac{1}{2}
   \| \sqrt{ \Gamma(f) } \| _{p}
\end{align}\end{itemize}
\end{proposition}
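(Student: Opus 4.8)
The plan is to reproduce the duality argument from the proof of Proposition~\ref{lem:pseudo-poincare}, this time feeding in the sharper gradient bounds of Proposition~\ref{GB2} which incorporate the positive curvature parameter $\rho_1$. Fix $f\in C_0^\infty(\M)$ and set $p'=\frac{p}{p-1}$. For $g\in C_0^\infty(\M)$ with $\|g\|_{p'}\le1$, writing $f-P_tf=-\int_0^t LP_sf\,ds$, using that $L$ and $P_s$ are self-adjoint with $LP_s=P_sL$, and integrating by parts, one obtains
\[
\int_\M g(f-P_tf)\,d\mu=\int_0^t\!\!\int_\M\Gamma(P_sg,f)\,d\mu\,ds\le\|\sqrt{\Gamma(f)}\|_p\int_0^t\|\sqrt{\Gamma(P_sg)}\|_{p'}\,ds,
\]
after a pointwise Cauchy--Schwarz inequality for the bilinear form $\Gamma$ followed by H\"older's inequality with exponents $p',p$. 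This is verbatim the computation in Proposition~\ref{lem:pseudo-poincare}; the only new ingredient is the estimation of $\int_0^t\|\sqrt{\Gamma(P_sg)}\|_{p'}\,ds$.

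I would then split according to the size of $p$. If $1\le p<2$ then $p'\in(2,+\infty]$, so the second estimate of Proposition~\ref{GB2} gives $\|\sqrt{\Gamma(P_sg)}\|_{p'}\le\gamma(s)^{1/2}\|g\|_{p'}$ with $\gamma(s)=\frac12\rho_1\frac{\rho_2+2\kappa}{\rho_2+\kappa}\frac{e^{-2\lambda s}}{1-e^{-\lambda s}}$ and $\lambda=\frac{\rho_1\rho_2}{\rho_2+\kappa}$. If $2\le p\le+\infty$ then $p'\in(1,2]$, so the first estimate of Proposition~\ref{GB2} applies (it also holds at $p'=2$), and discarding the factor $(1+(p'-1)\alpha(s))^{-1/2}\le1$, which is legitimate since $p'\ge1$, yields $\|\sqrt{\Gamma(P_sg)}\|_{p'}\le\beta(s)^{1/2}\|g\|_{p'}$ with $\beta(s)=\frac{d\rho_1\rho_2}{3(\rho_2+\kappa)}\bigl(1+\frac{3\kappa}{2\rho_2}\bigr)^2\frac{e^{-2\mu s}}{1-e^{-\mu s}}$ and $\mu=\frac{2\rho_1\rho_2}{3(\rho_2+\kappa)}$. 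In both cases all that remains is the elementary integral: the substitution $u=1-e^{-cs}$ gives $\int_0^t e^{-cs}(1-e^{-cs})^{-1/2}\,ds=\frac2c\sqrt{1-e^{-ct}}$, which is finite (the integrand behaves like $(cs)^{-1/2}$ near $s=0$) and vanishes as $t\to0$. Taking $c=\lambda$, resp.\ $c=\mu$, squaring, and simplifying the constants, one recovers precisely the right-hand sides of \eqref{pseudo-poincare3} and \eqref{pseudo-poincare4}.

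Finally, denoting by $A_p(t)\,\|\sqrt{\Gamma(f)}\|_p$ the right-hand side of \eqref{pseudo-poincare3}, resp.\ \eqref{pseudo-poincare4}, the above shows $\int_\M g(f-P_tf)\,d\mu\le A_p(t)\,\|\sqrt{\Gamma(f)}\|_p\,\|g\|_{p'}$ for every $g\in C_0^\infty(\M)$ with $\|g\|_{p'}\le1$, and the stated inequality follows by duality, $\|f-P_tf\|_p=\sup\{\int_\M g(f-P_tf)\,d\mu:\ g\in C_0^\infty(\M),\ \|g\|_{p'}\le1\}$, the endpoint cases $p=1$ and $p=\infty$ being handled by the same density considerations as in Proposition~\ref{lem:pseudo-poincare}. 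I do not anticipate any genuine obstacle here: the analytic substance is already carried by Theorem~\ref{T:ge2}, Proposition~\ref{reverse_poincare2} and Proposition~\ref{GB2}, and the only new computation is the trivial exponential integral above. The one point worth a remark is the behaviour near $s=0$: the bounds $\gamma(s)^{1/2}$ and $\beta(s)^{1/2}$ still blow up like $s^{-1/2}$ there, exactly as when $\rho_1=0$, so the time integral converges; for large $t$, however, their exponential decay is what makes the present pseudo-Poincar\'e inequalities stay bounded in $t$, the expected improvement over Proposition~\ref{lem:pseudo-poincare}.
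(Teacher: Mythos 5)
Your proof is correct and follows exactly the route the paper takes: repeat the duality computation from Proposition~\ref{lem:pseudo-poincare}, insert the gradient bounds from Proposition~\ref{GB2}, and evaluate the resulting time integrals via the substitution $u=1-e^{-cs}$. One small remark: you are slightly more careful than the paper in that you correctly write the denominator as $1+(p'-1)\alpha(s)$ (the paper writes $1+(p-1)\alpha(s)$ in its intermediate display for the $2\le p<\infty$ case, which should be the conjugate exponent), but since both arguments discard that factor by bounding $\bigl(1+(p'-1)\alpha(s)\bigr)^{-1/2}\le 1$ before integrating, the final constants agree.
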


\begin{proof}
As shown in the proof of Proposition \ref{lem:pseudo-poincare}, we have
\begin{align*}
 \| f - P_t f \|_p \leq \left( \int_0^t \sqrt{\gamma(s)} ds \right) \| \sqrt{\Gamma(f)} \|_p , \ \ \ \ \text{for } 1\leq p < 2 \\
 \| f - P_t f \|_p \leq \left( \int_0^t \sqrt{\frac{\beta(s)}{1+(p-1)\alpha(s)}} ds \right) \| \sqrt{\Gamma(f)} \|_p
      , \ \ \ \ \text{for } 2\leq p < \infty \\
\end{align*}
where $\alpha,\beta,\gamma$ are defined in the proof of Proposition \ref{GB2}. The proof is done by
\begin{align*}
 \int_0^t \sqrt{\gamma(s)} ds &= \int_0^t \left( \frac{\rho_1(\rho_2+2\kappa)}{2(\rho_2+\kappa)} \right)^\frac{1}{2}
  \frac{ e^{-\frac{\rho_1\rho_2 }{\rho_2+\kappa}s}}{\sqrt{ 1- e^{-\frac{\rho_1\rho_2}{\rho_2+\kappa} s} }} ds \\
  &= \left( \frac{2(\rho_2+2\kappa)(\rho_2+\kappa)}{\rho_1 \rho_2^2} ( 1- e^{-\frac{\rho_1\rho_2}{\rho_2+\kappa} t} ) \right)^\frac{1}{2}\\
 \int_0^t \sqrt{\frac{\beta(s)}{1+(p-1)\alpha(s)}} & ds  \leq \int_0^t \sqrt{\beta(s)} ds \\
  &= \int_0^t \left( \frac{ d\rho_1\rho_2 } { 3(\rho_2+\kappa) }
          \frac{ ( 1+\frac{3\kappa}{2\rho_2} )^2 e^{-\frac{4\rho_1\rho_2}{3(\rho_2+\kappa)}s} }  { (1-e^{- \frac{2\rho_1\rho_2} {3(\rho_2+\kappa)} s} ) } \right)^\frac{1}{2} ds \\
  &= \left( 1+\frac{3\kappa}{2\rho_2} \right) \left( \frac{3d(\rho_2 + \kappa)}{\rho_1\rho_2}  (1-e^{- \frac{2\rho_1\rho_2} {3(\rho_2+\kappa)} t} ) \right)^\frac{1}{2}
\end{align*}
\end{proof}

\subsection{Poincar\'e inequality}
In the case of $\rho_1 >0$, we have the following  theorem which is proved in \cite{BB}.
\begin{theorem} \label{T:finite_measure}
The measure $\mu$ is finite, i.e. $\mu(\M) < +\infty$ and for every $1 \le p \le \infty$, $f\in L^p(\M)$,
\[
 P_t f \xrightarrow{t\rightarrow \infty} \frac{1}{\mu(\M)} \int_\M f d\mu .
\]
\end{theorem}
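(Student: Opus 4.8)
The plan is to derive the finiteness of $\mu$ from the Bonnet--Myers theorem and then to obtain the convergence of $P_t f$ purely from the gradient bounds already established. Since $L$ satisfies CD$(\rho_1,\rho_2,\kappa,d)$ with $\rho_1>0$, the Bonnet--Myers type theorem proved in \cite{Bau2} shows that $(\M,d)$ has finite diameter; as $(\M,d)$ is assumed complete, closed balls are compact, so $\M$ is compact, and since $\mu$ is a smooth (hence locally finite) measure, $\mu(\M)<\infty$. In particular $C^\infty(\M)=C_0^\infty(\M)$, $L^q(\M)\subset L^1(\M)$ for every $q\ge1$, and $1\in L^p(\M)$, so $f_\M=\frac{1}{\mu(\M)}\int_\M f\,d\mu$ is well defined for every $f\in L^p(\M)$, $1\le p\le\infty$. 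Throughout I will use the mass conservation identity: by symmetry of $P_t$ and $P_t1=1$ one has $\int_\M P_t f\,d\mu=\int_\M f\,P_t1\,d\mu=\int_\M f\,d\mu$ for all $f\in L^1(\M)$.

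Next I would treat $f\in C_0^\infty(\M)$. The key observation is that Proposition \ref{GB2} gives, for each $p\in[1,\infty]$, a bound $\|\sqrt{\Gamma(P_t f)}\|_p\le\varepsilon_p(t)\|f\|_p$ with $\varepsilon_p(t)\to0$ as $t\to\infty$ (exponentially, once $t\ge1$), whereas the constant in the pseudo-Poincar\'e inequality of Proposition \ref{lem:pseudo-poincare2} is bounded uniformly in $t\ge0$, say by $C_p$. Applying the latter to $P_t f$ in place of $f$, for every $s\ge0$
\[
\|P_{t+s}f-P_tf\|_p=\|P_s(P_tf)-P_tf\|_p\le C_p\,\|\sqrt{\Gamma(P_tf)}\|_p\le C_p\,\varepsilon_p(t)\,\|f\|_p .
\]
Hence $\{P_tf\}_{t\ge0}$ is Cauchy in $L^p$ as $t\to\infty$ and converges to some $g\in L^p(\M)$. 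Letting $t\to\infty$ in $P_{t+s}f=P_s(P_tf)$ and using that $P_s$ is a contraction on $L^p$ gives $P_sg=g$ for all $s\ge0$; by hypoellipticity $g\in C^\infty(\M)$ and $Lg=0$, so integrating by parts on the compact $\M$ yields $\int_\M\Gamma(g)\,d\mu=-\int_\M gLg\,d\mu=0$, whence $\Gamma(g)\equiv0$ and, $\M$ being connected, $g$ is constant. Finally, since $\mu$ is finite, $L^p$-convergence passes to the integral, so $g\,\mu(\M)=\lim_{t\to\infty}\int_\M P_tf\,d\mu=\int_\M f\,d\mu$ by mass conservation, i.e. $g=f_\M$.

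It remains to remove the restriction $f\in C_0^\infty(\M)$. For arbitrary $f\in L^p(\M)$, $1\le p\le\infty$, hypoellipticity together with compactness of $\M$ gives $P_1f\in C^\infty(\M)=C_0^\infty(\M)$; writing $P_tf=P_{t-1}(P_1f)$ for $t\ge1$ and applying the previous step to $P_1f$ yields $P_tf\to(P_1f)_\M$ in $L^p$, and $(P_1f)_\M=\frac{1}{\mu(\M)}\int_\M P_1f\,d\mu=\frac{1}{\mu(\M)}\int_\M f\,d\mu=f_\M$ by mass conservation once more. This also covers $p=\infty$: there $P_1f$ is continuous on the compact $\M$, hence bounded, and the Cauchy argument of the second step runs verbatim in $L^\infty$ using the $p=+\infty$ cases of Propositions \ref{GB2} and \ref{lem:pseudo-poincare2}.

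The only ingredient that does not follow mechanically from the estimates developed above is the finiteness of $\mu$, which rests on the Bonnet--Myers theorem of \cite{Bau2}; I expect this to be the conceptual crux, the remainder being a routine synthesis of the gradient bounds. A minor technical point to handle with care is the implication $\Gamma(g)\equiv0\Rightarrow g$ constant, which uses the connectedness of $\M$, the fact that the intrinsic distance \eqref{ds} is finite between any two points (equivalently \eqref{di}), and the smoothness of $g$ provided by hypoellipticity.
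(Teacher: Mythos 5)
The paper does not prove Theorem \ref{T:finite_measure}; it is simply quoted from the reference \cite{BB}. Your self-contained argument is correct and is the natural route given the tools developed in Section 3. The two pillars you identify are exactly right: the finiteness of $\mu$ is genuinely external to the present estimates and rests on the Bonnet--Myers theorem of \cite{Bau2} (positive $\rho_1$ forces finite diameter, completeness of $(\M,d)$ then gives compactness via Hopf--Rinow for length spaces, and a smooth measure on a compact manifold is finite), while the ergodic convergence follows mechanically from Propositions \ref{GB2} and \ref{lem:pseudo-poincare2}: the pseudo-Poincar\'e constant is bounded uniformly in $t$, the gradient bound decays to zero, so $\|P_{t+s}f-P_tf\|_p\le C_p\,\varepsilon_p(t)\|f\|_p$ makes $P_tf$ Cauchy in $L^p$, and the limit is a fixed point of $P_s$, hence (by hypoellipticity and $\int_\M\Gamma(g)\,d\mu=-\int_\M gLg\,d\mu=0$ on the compact $\M$) a constant, which mass conservation identifies as $f_\M$. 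The reduction of general $f\in L^p$ to $C_0^\infty(\M)$ via $P_1f$ and the separate treatment of $p=\infty$ are handled correctly, as is the step $\Gamma(g)\equiv 0\Rightarrow g$ constant via the dual characterization \eqref{di} of the distance. One minor remark: when you invoke Proposition \ref{lem:pseudo-poincare2} with $P_tf$ in place of $f$, you are implicitly using that on compact $\M$ one has $C^\infty(\M)=C_0^\infty(\M)$ so that the smooth function $P_tf$ is admissible; you do state this equality earlier, but it is worth flagging as the place where compactness, and not just finiteness of $\mu$, enters the convergence argument.
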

This theorem allows us to deduce the Poincar\'{e} inequality.
\begin{proposition} \label{poincare_ineq}
Let $1\leq p <\infty$. There exists $C= C_p(\rho_1,\rho_2,\kappa,d)>0$ such that, for $\forall f\in C_0^\infty (\M)$,
\begin{align*}
 \| f- f_\M \|_p \leq  C  \| \sqrt{\Gamma(f)} \|_p ,
\end{align*}
where $f_\M = \frac{1}{\mu(\M)} \int_\M f d\mu $.
\end{proposition}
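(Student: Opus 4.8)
The plan is to let $t\to\infty$ in the pseudo-Poincar\'e inequalities of Proposition~\ref{lem:pseudo-poincare2}, using the convergence of $P_t f$ to $f_\M$ furnished by Theorem~\ref{T:finite_measure}.

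First I would record that, by Proposition~\ref{lem:pseudo-poincare2}, for each $p\in[1,\infty)$ there is an explicit function $g_p(t)$ with $\| f - P_t f\|_p \le g_p(t)\,\|\sqrt{\Gamma(f)}\|_p$ for all $f\in C_0^\infty(\M)$ and all $t\ge 0$; namely
\[
g_p(t) = \left( \frac{2(\rho_2+2\kappa)(\rho_2+\kappa)}{\rho_1\rho_2^2}\bigl(1-e^{-\frac{\rho_1\rho_2}{\rho_2+\kappa}t}\bigr) \right)^{1/2}\quad (1\le p<2),
\]
\[
g_p(t) = \Bigl(1+\tfrac{3\kappa}{2\rho_2}\Bigr)\left( \frac{3d(\rho_2+\kappa)}{\rho_1\rho_2}\bigl(1-e^{-\frac{2\rho_1\rho_2}{3(\rho_2+\kappa)}t}\bigr)\right)^{1/2}\quad (2\le p<\infty).
\]
In either range $g_p$ is nondecreasing in $t$ and bounded above by its finite limit $g_p(\infty)$, which depends only on $\rho_1,\rho_2,\kappa,d$; in fact $g_p(\infty)$ does not depend on $p$ within each of the two ranges, and not on $d$ when $1\le p<2$, which accounts for the remarks made after the statement in the introduction.

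Next, Theorem~\ref{T:finite_measure} gives $\mu(\M)<\infty$, so that the constant $f_\M$ lies in $L^p(\M)$ and $f-f_\M\in L^p(\M)$ for $f\in C_0^\infty(\M)$; the same theorem gives $P_t f \to f_\M$ in $L^p(\M)$ as $t\to\infty$. Then by the triangle inequality,
\[
\| f - f_\M\|_p \le \| f - P_t f\|_p + \| P_t f - f_\M\|_p \le g_p(\infty)\,\|\sqrt{\Gamma(f)}\|_p + \| P_t f - f_\M\|_p,
\]
and letting $t\to\infty$ makes the last term vanish. This proves the proposition with $C=g_p(\infty)$, i.e. $C=\sqrt{2(\rho_2+2\kappa)(\rho_2+\kappa)/(\rho_1\rho_2^2)}$ for $1\le p<2$ and $C=(1+\tfrac{3\kappa}{2\rho_2})\sqrt{3d(\rho_2+\kappa)/(\rho_1\rho_2)}$ for $2\le p<\infty$.

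I do not expect a genuine obstacle, since all the substance is already packaged in Proposition~\ref{lem:pseudo-poincare2} and Theorem~\ref{T:finite_measure}. The only point that must be checked is that the convergence asserted in Theorem~\ref{T:finite_measure} is strong $L^p$ convergence --- which is precisely how it is stated --- so that the passage to the limit in the triangle inequality is legitimate; a minor preliminary remark is that $f-f_\M\in L^p(\M)$, which is immediate from $f\in C_0^\infty(\M)$ and $\mu(\M)<\infty$.
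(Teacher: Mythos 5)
Your proposal matches the paper's proof exactly: both pass to the limit $t\to\infty$ in Proposition~\ref{lem:pseudo-poincare2}, using the $L^p$ convergence $P_t f \to f_\M$ from Theorem~\ref{T:finite_measure}, and both arrive at the same explicit constants $C_p$. The only difference is that you spell out the triangle-inequality step, which the paper leaves implicit as "immediate."
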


\begin{proof}
The proof is immediate from Proposition \ref{lem:pseudo-poincare2} and Theorem \ref{T:finite_measure} by letting $t \to \infty$. And $C$ is given by
\[
C_p(\rho_1,\rho_2,\kappa,d) = \left\{ \begin{array}{ll}
            \left( \frac{2(\rho_2+2\kappa)(\rho_2+\kappa)}{\rho_1 \rho_2^2}  \right)^\frac{1}{2} & \text{if } 1\leq p<2, \\
            ( 1+\frac{3\kappa}{2\rho_2} ) \left( \frac{3d(\rho_2 + \kappa)}{\rho_1\rho_2} \right)^\frac{1}{2} & \text{if } 2\leq p<\infty. \end{array} \right.
\]
\end{proof}

\subsection{A lower bound on the Cheeger's isoperimetric constant}

In \cite{Che}, in order to bound from below the first eigenvalue $\lambda_1$  of a compact Riemannian manifold with normalized Riemannian measure $\mu_g$, Cheeger's introduced   the following isoperimetric constant
\[
h=\inf \frac{\mu_g (\partial A)}{\mu_g(A)},
\]
where the infimum runs over all open subsets $A$ with smooth boundary $\partial A$ such that $\mu(A)\le \frac{1}{2}$. Cheeger's inequality then writes $\lambda_1 \ge \frac{h^2}{4}$.

Such isoperimetric quantity may also be considered and estimated in our sub-Riemannian framework. Throughout this section, we  assume $\mu(\M) = 1$. Let
\[
\iota=\inf \frac{P(E)}{\mu(E)}
\]
where the infimum runs over all Caccioppoli sets $E$ such that $\mu(E)\le \frac{1}{2}$ (we remind that $P(E)$ denotes the perimeter of $E$ as defined in Section 2.4 ). By following the argument of Ledoux in \cite{ledoux2} we see that $\lambda_1 \ge \frac{i^2}{4}$ where $\lambda_1$ is the first eigenvalue of $-L$. The next proposition gives a lower bound on $\iota$ (and therefore on $\lambda_1$).

\begin{proposition}\label{P:isocompact}
Let $E\subset \bM$ be a Caccioppoli  set. We have
\[
\mu(E)(1-\mu(E))  \le  \sqrt{\frac{2}{\rho_1} }  \left( 1+\frac{2\kappa}{\rho_2}  \right) P(E).
\]
As a consequence
\[
\iota \ge \frac{1}{2}\sqrt{\frac{\rho_1}{2} } \frac{1}{  1+\frac{2\kappa}{\rho_2}  }.
\]
\end{proposition}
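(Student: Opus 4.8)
The plan is to prove the first inequality by the heat semigroup method, applied not directly to $\mathbf 1_E$ but to a smooth approximation, and then to read off the bound on $\iota$ as an elementary consequence. First I would recall from Proposition \ref{GB1} the $L^2$ gradient bound $\| \sqrt{\Gamma(P_t f)}\|_2 \le \sqrt{(1+\tfrac{2\kappa}{\rho_2})/(2t)}\,\|f\|_2$ in the case $\rho_1 = 0$; in the present case $\rho_1 > 0$ one has instead the sharper decaying bound from Proposition \ref{GB2} with $p = 2$, namely $\|\sqrt{\Gamma(P_t f)}\|_2 \le (\gamma(t))^{1/2}\|f\|_2$ where $\gamma(t) = \tfrac12\rho_1\tfrac{\rho_2+2\kappa}{\rho_2+\kappa}\,e^{-2\frac{\rho_1\rho_2}{\rho_2+\kappa}t}/(1-e^{-\frac{\rho_1\rho_2}{\rho_2+\kappa}t})$. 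Actually, to get the clean constant $\sqrt{2/\rho_1}\,(1+2\kappa/\rho_2)$ appearing in the statement, I expect one wants the $L^1$ version: combining the reverse Poincaré inequality of Proposition \ref{reverse_poincare2} with the $L^\infty$ contractivity of $P_t$ in the form used in Proposition \ref{GB2} (case $2 \le p \le \infty$), one gets $\|\sqrt{\Gamma(P_t f)}\|_\infty \le (\gamma(t))^{1/2}\|f\|_\infty$, and by duality — exactly as in the proof of Proposition \ref{lem:pseudo-poincare} — a pseudo-Poincaré inequality controlling $\|f - P_t f\|_1$ by $\int_0^t \sqrt{\gamma(s)}\,ds\,\|\sqrt{\Gamma(f)}\|_1 = \big(\tfrac{2(\rho_2+2\kappa)(\rho_2+\kappa)}{\rho_1\rho_2^2}(1-e^{-\frac{\rho_1\rho_2}{\rho_2+\kappa}t})\big)^{1/2}\|\sqrt{\Gamma(f)}\|_1$, i.e. \eqref{pseudo-poincare3} with $p=1$.

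Next I would write, for any $f \in C_0^\infty(\M)$,
\[
\int_\M f(f - P_t f)\,d\mu = \int_\M f^2\,d\mu - \int_\M f\,P_t f\,d\mu,
\]
and let $t \to \infty$: by Theorem \ref{T:finite_measure} (using $\mu(\M)=1$), $P_t f \to \int_\M f\,d\mu = f_\M$, so the left side tends to $\|f\|_2^2 - f_\M\int_\M f\,d\mu = \|f - f_\M\|_2^2$. On the other hand, $|\int_\M f(f - P_t f)\,d\mu| \le \|f\|_\infty\,\|f - P_t f\|_1$, and letting $t \to \infty$ in the pseudo-Poincaré bound just obtained gives $\|f - f_\M\|_2^2 \le \sqrt{2/\rho_1}\,(1+2\kappa/\rho_2)\,\|f\|_\infty\,\|\sqrt{\Gamma(f)}\|_1$. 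Now I specialize to $f$ close to $\mathbf 1_E$: by Lemma \ref{P:ag} there is a sequence $f_n \in C_0^\infty(\M)$ with $f_n \to \mathbf 1_E$ in $L^1$ and $\int_\M\sqrt{\Gamma(f_n)}\,d\mu \to \mathrm{Var}(\mathbf 1_E) = P(E)$; one may also arrange $0 \le f_n \le 1$ (by truncating, which does not increase $\mathrm{Var}$ and preserves $L^1$ convergence), so that $\|f_n\|_\infty \le 1$ and, passing to a subsequence, $\|f_n - f_{n,\M}\|_2^2 \to \|\mathbf 1_E - \mu(E)\|_2^2 = \mu(E)(1-\mu(E))$. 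Taking the limit in the displayed inequality yields $\mu(E)(1-\mu(E)) \le \sqrt{2/\rho_1}\,(1+2\kappa/\rho_2)\,P(E)$, which is the first assertion.

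Finally, for the consequence: if $\mu(E) \le \tfrac12$ then $1 - \mu(E) \ge \tfrac12$, hence $\mu(E)/2 \le \mu(E)(1-\mu(E)) \le \sqrt{2/\rho_1}\,(1+2\kappa/\rho_2)\,P(E)$, i.e. $P(E)/\mu(E) \ge \tfrac12\sqrt{\rho_1/2}\,/(1+2\kappa/\rho_2)$; taking the infimum over all such $E$ gives the stated lower bound on $\iota$. I expect the main technical point to be the justification of the $L^1 \to L^\infty$ duality step for the pseudo-Poincaré inequality at the endpoint $p = 1$ (equivalently, that $\|\sqrt{\Gamma(P_s g)}\|_\infty \le \sqrt{\gamma(s)}\,\|g\|_\infty$ and that the time integral $\int_0^\infty\sqrt{\gamma(s)}\,ds$ converges — which it does, since $\gamma(s) \sim \text{const}/s$ near $0$ and decays exponentially at infinity), together with the approximation argument ensuring one may pass to the limit simultaneously in all three quantities $\|f_n\|_\infty$, $\|f_n - f_{n,\M}\|_2$, and $\|\sqrt{\Gamma(f_n)}\|_1$; the semigroup estimates themselves are already in hand from Propositions \ref{reverse_poincare2}, \ref{GB2} and \ref{lem:pseudo-poincare2}.
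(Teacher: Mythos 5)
Your proposal is correct in spirit and arrives at the same inequality by the same core mechanism — the $L^1$ pseudo-Poincar\'e inequality of Proposition~\ref{lem:pseudo-poincare2} extended to $\mathbf 1_E$ via Lemma~\ref{P:ag} — but you reorganize the limits and rely on Theorem~\ref{T:finite_measure} instead of the heat-kernel upper bound. The paper first passes $n\to\infty$ in the pseudo-Poincar\'e inequality (the left side converges since $P_t$ is an $L^1$-contraction and $f_n\to\mathbf 1_E$ in $L^1$, the right side by Lemma~\ref{P:ag}(ii)) to obtain $\|P_t\mathbf 1_E-\mathbf 1_E\|_1\le \sqrt{2/\rho_1}(1+2\kappa/\rho_2)\sqrt{1-e^{-\frac{\rho_1\rho_2}{\rho_2+\kappa}t}}\,P(E)$ for each fixed $t$. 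It then bounds $\|P_t\mathbf 1_E-\mathbf 1_E\|_1\ge\int_\M\mathbf 1_{E^c}(P_t\mathbf 1_E-\mathbf 1_E)\,d\mu=\mu(E)-\int_\M(P_{t/2}\mathbf 1_E)^2\,d\mu$ and controls the last integral with the heat-kernel estimate $p(x,y,t)\le(1-e^{-2\rho_1\rho_2 t/3(\rho_2+\kappa)})^{-\frac d2(1+3\kappa/2\rho_2)}\to 1$, so that letting $t\to\infty$ gives $\mu(E)(1-\mu(E))$ on the left. Note that $\int_\M\mathbf 1_E(\mathbf 1_E - P_t\mathbf 1_E)\,d\mu=\mu(E)-\int_\M(P_{t/2}\mathbf 1_E)^2\,d\mu$ is identical to your quantity $\int f(f-P_tf)\,d\mu$ at $f=\mathbf 1_E$, so the estimates really do coincide.

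The one step in your argument that needs repair is the truncation. Lemma~\ref{P:ag} produces $f_n\in C_0^\infty(\bM)$, but $\max(0,\min(1,f_n))$ is only Lipschitz with compact support, so it is not in the class to which the pseudo-Poincar\'e inequality (and hence your intermediate bound $\|f-f_\M\|_2^2\le C\|f\|_\infty\|\sqrt{\Gamma(f)}\|_1$) has been established. You would need either to mollify the truncated functions and track all three limits again, or to extend the inequality to compactly supported Lipschitz functions. The paper sidesteps this entirely by never applying the pseudo-Poincar\'e inequality to anything whose $L^\infty$ norm needs to be controlled: the $L^\infty$ factor appears only through the elementary bound $\int\mathbf 1_{E^c}|P_t\mathbf 1_E-\mathbf 1_E|\le\|P_t\mathbf 1_E-\mathbf 1_E\|_1$, where $\|\mathbf 1_{E^c}\|_\infty\le 1$ is free. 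If you adopt that ordering — extend the pseudo-Poincar\'e to $\mathbf 1_E$ first, then perform your $\int\mathbf 1_E(\mathbf 1_E-P_t\mathbf 1_E)$ computation and let $t\to\infty$ — the truncation disappears and your argument becomes essentially the paper's (with your appeal to Theorem~\ref{T:finite_measure}, which is a valid alternative to the heat-kernel bound provided the convergence is understood in $L^2$, which holds here since $0\le P_t\mathbf 1_E\le1$ and $\mu(\bM)=1$). The derivation of the bound on $\iota$ is correct as written.
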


\begin{proof}
We know from the pseudo-Poincar\'e inequality that for $f \in C^\infty_0(\M)$,
\begin{align}\label{PoincareP_t}
\|P_tf - f\|_{1} \le \sqrt{\frac{2}{\rho_1} }  \left( 1+\frac{2\kappa}{\rho_2}  \right)  \sqrt{1- e^{-\frac{\rho_1\rho_2}{\rho_2+\kappa} t}} \|
\sqrt{\Gamma(f)} \|_{1},\ \ \ t>0.
\end{align}
Suppose now that $E\subset \bM$ is a Caccioppoli set. By Proposition \ref{P:ag} there exists a sequence
$\{f_n\}_{n\in \mathbb N}$ in $C^\infty_0(\bM)$ satisfying (i) and
(ii) of that Proposition. Applying \eqref{PoincareP_t} to $f_n$ we obtain \[ \|P_tf_n -
f_n\|_{1} \le \sqrt{\frac{2}{\rho_1} }  \left( 1+\frac{2\kappa}{\rho_2}  \right) \sqrt{1- e^{-\frac{\rho_1\rho_2}{\rho_2+\kappa} t}}  \| \sqrt{\Gamma(f_n)}
\|_{1} = \sqrt{\frac{2}{\rho_1} }  \left( 1+\frac{2\kappa}{\rho_2}  \right) \sqrt{1- e^{-\frac{\rho_1\rho_2}{\rho_2+\kappa} t}}  \text{Var}(f_n).
\]
Letting $n\to \infty$ in this inequality, we conclude
\[ \|P_t \mathbf 1_E -
\mathbf 1_E\|_{L^1(\bM)} \le \sqrt{\frac{2}{\rho_1} }  \left( 1+\frac{2\kappa}{\rho_2}  \right) \sqrt{1- e^{-\frac{\rho_1\rho_2}{\rho_2+\kappa} t}}
\text{Var}(\mathbf 1_E) = \sqrt{\frac{2}{\rho_1} }  \left( 1+\frac{2\kappa}{\rho_2}  \right) \sqrt{1- e^{-\frac{\rho_1\rho_2}{\rho_2+\kappa} t}}  P(E).
\]
Observe now that, using $P_t 1 = 1$, we have
\begin{align*}
\|P_t \mathbf 1_E - \mathbf 1_E\|_{L^1(\bM)} & \geq \int_\M | \mathbf 1_{E^c} | ~~|P_t \mathbf 1_E - \mathbf 1_E | d\mu \\
\geq & \int_\M \mathbf 1_{E^c} (P_t \mathbf 1_E - \mathbf 1_E ) d\mu =  \int_\M \mathbf 1_{E^c} P_t \mathbf 1_E  d\mu \\
= & \int_\M P_t \mathbf 1_E d\mu - \int_\M \mathbf 1_E  P_t \mathbf 1_E d\mu =  \int_\M \mathbf 1_E d\mu - \int_E P_t \mathbf 1_E d\mu \\
= & \mu(E) -\int_E P_t \mathbf 1_E d\mu
\end{align*}
On the other hand, from the semigroup property we have
\[
\int_E  P_t \mathbf 1_E d\mu  = \int_\bM \left(P_{t/2}\mathbf
1_E\right)^2 d\mu.
\]
We thus obtain
\[
||P_t \mathbf 1_E - \mathbf 1_E||_{L^1(\bM)} \geq  \left(\mu(E) -
\int_\bM \left(P_{t/2}\mathbf 1_E\right)^2 d\mu\right).
\]
In \cite{Bau2}, it has been proved that for $x,y \in \bM$ and $t>0$,
\[
p(x,y,t) \le \frac{1}{\left( 1-e^{-\frac{2\rho_1 \rho_2
t}{3(\rho_2+\kappa)}}
\right)^{\frac{d}{2}\left(1+\frac{3\kappa}{2\rho_2}\right)} }.
\]

This gives
\begin{align*}
\int_\bM (P_{t/2} \mathbf 1_E)^2 d\mu & \le \left(\int_E
\left(\int_\bM p(x,y,t/2)^2
d\mu(y)\right)^{\frac{1}{2}}d\mu(x)\right)^2
\\
& = \left(\int_E p(x,x,t)^{\frac{1}{2}}d\mu(x)\right)^2 \le
\frac{1}{\left( 1-e^{-\frac{2\rho_1 \rho_2
t}{3(\rho_2+\kappa)}}
\right)^{d\left(1+\frac{3\kappa}{2\rho_2}\right)} } \mu(E)^2.
\end{align*}
Combining these equations we reach the conclusion

\[
 \sqrt{\frac{2}{\rho_1} }  \left( 1+\frac{2\kappa}{\rho_2}  \right) \sqrt{1- e^{-\frac{\rho_1\rho_2}{\rho_2+\kappa} t}} P(E) \ge
  \mu(E) -\frac{1}{\left( 1-e^{-\frac{2\rho_1 \rho_2
t}{3(\rho_2+\kappa)}}
\right)^{d\left(1+\frac{3\kappa}{2\rho_2}\right)} } \mu(E)^2 .
\]
We conclude by letting $t \to +\infty$.
\end{proof}

\subsection{A Lichnerowicz type theorem}\label{lichnerowicz}

A well-known theorem of Lichnerowicz asserts that on a
$d$-dimensional complete Riemannian manifold whose Ricci curvature
is bounded below by a non negative constant $\rho$, then the first
eigenvalue of the Laplace-Beltrami operator is bounded below by
$\frac{\rho d}{d-1}$. In this section, we provide a similar theorem
for our operator $L$. Let us observe that in \cite{Greenleaf},
Greenleaf obtained a similar result for the sub-Laplacian on a CR
manifold. A recent work of Hladky \cite{Hla} also gives lower bounds for the first eigenvalue of sub-Laplacians on some sub-Riemannian manifolds.

\begin{proposition}
The first non zero eigenvalue $\lambda_1$ of $-L$ satisfies the estimate
\[
\lambda_1 \ge \frac{\rho_1 \rho_2}{\frac{d-1}{d} \rho_2 +\kappa}.
\]
\end{proposition}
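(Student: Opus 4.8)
The natural route is the classical Bakry--\'Emery integration argument adapted to the generalized curvature dimension inequality $\mathrm{CD}(\rho_1,\rho_2,\kappa,d)$. Let $f$ be an eigenfunction of $-L$ with eigenvalue $\lambda_1>0$, i.e. $Lf=-\lambda_1 f$; since $\mathbb{M}$ is compact (by the Bonnet--Myers theorem quoted in the introduction) and $L$ is essentially self-adjoint, such an $f\in C^\infty(\mathbb{M})$ exists and all integrations by parts below are justified with no boundary terms. First I would record the two elementary identities obtained by integrating against $d\mu$: from $\int_\mathbb{M} Lg\, d\mu=0$ applied to $g=\Gamma(f)$ one gets $\int_\mathbb{M}\Gamma_2(f)\,d\mu=\int_\mathbb{M}(Lf)^2 d\mu=\lambda_1^2\int_\mathbb{M} f^2 d\mu$, and similarly applied to $g=\Gamma^Z(f)$ one gets $\int_\mathbb{M}\Gamma_2^Z(f)\,d\mu=-\int_\mathbb{M}\Gamma^Z(f,Lf)\,d\mu=\lambda_1\int_\mathbb{M}\Gamma^Z(f)\,d\mu$. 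Also $\int_\mathbb{M}\Gamma(f)\,d\mu=-\int_\mathbb{M} fLf\,d\mu=\lambda_1\int_\mathbb{M} f^2 d\mu$.

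Next I would integrate the $\mathrm{CD}(\rho_1,\rho_2,\kappa,d)$ inequality, for a fixed parameter $\nu>0$ to be optimized, over $\mathbb{M}$. Using the three identities above this yields
\begin{equation*}
\lambda_1^2\int_\mathbb{M} f^2 d\mu+\nu\lambda_1\int_\mathbb{M}\Gamma^Z(f)\,d\mu\ \ge\ \frac{\lambda_1^2}{d}\int_\mathbb{M} f^2 d\mu+\Bigl(\rho_1-\frac{\kappa}{\nu}\Bigr)\lambda_1\int_\mathbb{M} f^2 d\mu+\rho_2\int_\mathbb{M}\Gamma^Z(f)\,d\mu.
\end{equation*}
Rearranging, $\bigl(\rho_2-\nu\lambda_1\bigr)\int_\mathbb{M}\Gamma^Z(f)\,d\mu\le\bigl(\lambda_1^2-\tfrac{\lambda_1^2}{d}-(\rho_1-\tfrac{\kappa}{\nu})\lambda_1\bigr)\int_\mathbb{M} f^2 d\mu$. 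The key structural point is that $\int_\mathbb{M}\Gamma^Z(f)\,d\mu\ge 0$, so if I choose $\nu$ small enough that $\rho_2-\nu\lambda_1\ge 0$ — concretely $\nu=\rho_2/\lambda_1$ makes the left side vanish — the left side is nonnegative and I obtain $0\le\bigl(\tfrac{d-1}{d}\lambda_1-\rho_1+\tfrac{\kappa\lambda_1}{\rho_2}\bigr)\lambda_1\int_\mathbb{M} f^2 d\mu$. Since $\lambda_1>0$ and $\int_\mathbb{M} f^2 d\mu>0$, this forces $\tfrac{d-1}{d}\lambda_1+\tfrac{\kappa}{\rho_2}\lambda_1\ge\rho_1$, i.e. $\lambda_1\ge\rho_1\big/\bigl(\tfrac{d-1}{d}+\tfrac{\kappa}{\rho_2}\bigr)=\tfrac{\rho_1\rho_2}{\frac{d-1}{d}\rho_2+\kappa}$, which is exactly the claimed bound.

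The computation is essentially routine once the setup is in place; the only genuine subtlety is justifying that one may integrate the $\Gamma_2$ and $\Gamma_2^Z$ terms and integrate by parts freely. On a compact manifold with $f$ smooth this is immediate, but I would nonetheless invoke compactness explicitly (from the Bonnet--Myers statement quoted after the definition of $\mathrm{CD}(\rho_1,\rho_2,\kappa,d)$) and note that one could alternatively approximate using the cutoff functions $h_k$ of (H.1) together with (H.3) if one preferred to avoid quoting compactness; I expect this bookkeeping, rather than any analytic difficulty, to be the only thing requiring care. A final remark worth making is that the choice $\nu=\rho_2/\lambda_1$ is in fact the optimal one: for any other admissible $\nu$ with $\rho_2-\nu\lambda_1\ge 0$ one gets a weaker inequality, and for $\rho_2-\nu\lambda_1<0$ the sign of the $\Gamma^Z$ term is wrong, so no information is gained; this explains why the bound takes the particular form above.
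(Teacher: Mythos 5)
Your proposal is correct and follows essentially the same line of reasoning as the paper: integrate the $\mathrm{CD}(\rho_1,\rho_2,\kappa,d)$ inequality against $d\mu$, use $Lf=-\lambda_1 f$ together with $\int_\mathbb{M} Lg\,d\mu=0$ to evaluate $\int\Gamma_2$, $\int\Gamma_2^Z$, $\int\Gamma$, and then set $\nu=\rho_2/\lambda_1$ to eliminate the $\Gamma^Z$ term. The only difference is presentational — you isolate the three integral identities as a preliminary step and add a short remark on optimality of $\nu$ and on the role of compactness, whereas the paper carries out the integration by parts inline — but the mathematical content is identical.
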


\begin{proof}
Let $f:\mathbb{M} \rightarrow \mathbb{R}$ be an eigenfunction corresponding to the eigenvalue $-\lambda_1$.
From the generalized curvature dimension inequality we know that for every $\nu >0$,
\begin{equation*}
\Gamma_{2}(f,f)+\nu \Gamma^Z_{2}(f,f) \ge \frac{1}{d} (Lf)^2 + \left( \rho_1 -\frac{\kappa}{\nu}\right)  \Gamma (f,f) + \rho_2 \Gamma^Z (f,f).
\end{equation*}
By integrating this inequality on the manifold $\mathbb{M}$, we obtain
\begin{equation*}
\int_{\mathbb{M}} \Gamma_{2}(f,f) d\mu+\nu\int_{\mathbb{M}} \Gamma^Z_{2}(f,f)d\mu \ge \frac{1}{d} \int_{\mathbb{M}}(Lf)^2 d\mu+ \left( \rho_1 -\frac{\kappa}{\nu}\right) \int_{\mathbb{M}} \Gamma (f,f) d\mu+ \rho_2\int_{\mathbb{M}} \Gamma^Z (f,f)d\mu.
\end{equation*}
Let us now recall that
\begin{equation*}
\Gamma_{2}(f,f) = \frac{1}{2}\big[L\Gamma(f,f) -2 \Gamma(f,
Lf)\big],
\end{equation*}
and
\begin{equation*}
\Gamma^Z_{2}(f,f) = \frac{1}{2}\big[L\Gamma^Z (f,f) - 2\Gamma^Z(f,
Lf)\big].
\end{equation*}
Therefore, by using $Lf=-\lambda_1f$ and integrating by parts in the
above inequality, we find
\begin{equation*}
\left( \lambda_1^2 -\frac{\lambda^2_1}{d}+\frac{\kappa \lambda_1}{\nu}  -\rho_1 \lambda_1 \right)
\int_{\mathbb{M}} f^2 d\mu \ge (\rho_2 -\nu \lambda_1) \int_{\mathbb{M}} \Gamma^Z (f,f)d\mu.
\end{equation*}
By choosing $\nu=\frac{\rho_2}{\lambda_1}$, we obtain the
inequality
\[
\lambda_1 \ge \frac{\rho_1 \rho_2}{\frac{d-1}{d} \rho_2 +\kappa}.
\]
\end{proof}

\begin{remark}
We note that when $\kappa = 0$, which corresponds to the Riemannian case,  we recover the classical theorem of
Lichnerowicz.
\end{remark}

\end{document}